\newcommand{\CS}{\text{CS}}
\newcommand{\Tor}{\text{Tor}}
\newcommand{\Tr}{\text{Tr}}
\newcommand{\Ima}{\text{Im}}
\newcommand{\Kernel}{\text{Ker}}
\newcommand{\Sym}{\text{Sym}}
\newcommand{\C}{\mathcal{C}}
\newcommand{\D}{\mathcal{D}}
\newcommand{\R}{\mathcal{R}}
\newcommand{\SU}{\text{SU}}
\newcommand{\bbZ}{\mathbb{Z}}
\newcommand{\bbR}{\mathbb{R}}
\newcommand{\bbC}{\mathbb{C}}
\newcommand{\IntSet}[1]{[0 \cdots #1]}
\newcommand{\floor}[1]{\lfloor #1 \rfloor}
\newcommand{\highlight}[1]{{\color{red} #1}}
\newcommand{\TLJ}{\text{TLJ}}
\newcommand{\unitobj}{\mathbf{1}}
\newcommand{\Hom}{\textrm{Hom}}
\newcommand{\SL}{\textrm{SL}}
\newcommand{\GL}{\textrm{GL}}
\newcommand{\bbS}{\mathbb{S}}
\newcommand{\so}{\mathfrak{so}}
\newcommand{\mcB}{\mathcal{B}}
\newcommand{\Adj}{\textrm{Adj}}
\newcommand{\nabchar}{\chi^{\text{nab}}}
\newcommand{\gtensor}{\boxtimes_{gr}}
\tikzset{
    myarrow/.style n args = {2}{
    postaction = decorate,
    decoration={
    markings,
    mark=at position {#1} with {\arrow{#2}}}
    }
}
\theoremstyle{definition}
\newtheorem{theorem}{Theorem}[section]
\newtheorem{proposition}[theorem]{Proposition}
\newtheorem{lemma}[theorem]{Lemma}
\newtheorem{lemma-definition}[theorem]{Lemma-Definition}
\newtheorem{definition}[theorem]{Definition}
\newtheorem{remark}[theorem]{Remark}
\newtheorem{example}[theorem]{Example}
\title{From Three Dimensional Manifolds to Modular Tensor Categories}
\author[1]{Shawn X. Cui}
\author[2]{Yang Qiu}
\author[3]{Zhenghan Wang}
\affil[1]{{\small Department of Mathematics and Department of Physics and Astronomy, Purdue University, West Lafayette, IN 47906}}
\affil[2]{{\small Department of Mathematics, University of California,  Santa Barbara, CA 93106}}
\affil[3]{{\small Microsoft Station Q and Dept. of Math., University of California,  Santa Barbara, CA 93106}}
\affil[ ]{{\small \it cui177@purdue.edu, yangqiu@math.ucsb.edu, zhenghwa@microsoft.com}}
\date{}
\begin{document}

\maketitle

\begin{abstract}
Using M-theory in physics,  Cho, Gang, and Kim (JHEP \textbf{2020}, 115 (2020) ) recently outlined a program that connects two parallel subjects of three dimensional manifolds, namely, geometric topology and quantum topology. They suggest that classical topological invariants such as Chern-Simons invariants of $\SL(2,\bbC)$-flat connections and $\SL(2,\bbC)$-adjoint Reidemeister torsions of a three manifold can be packaged together to produce a $(2+1)$-topological quantum field theory, which is essentially equivalent to a modular tensor category.  It is further conjectured that every modular tensor category can be obtained from a three manifold and a semi-simple Lie group.  In this paper, we study this program mathematically, and provide strong support for the feasibility of such a program.  The program produces an algorithm to generate the potential modular $T$-matrix and the quantum dimensions of a candidate modular data.
The modular $S$-matrix follows from essentially a trial-and-error procedure. We find premodular tensor categories that realize candidate modular data constructed from Seifert fibered spaces and torus bundles over the circle that reveal many subtleties in the program. We make a number of improvements to the program based on our examples.  Our main result is a mathematical construction of the modular data of a premodular category from each Seifert fibered space with three singular fibers and a family of torus bundles over the circle with Thurston SOL geometry.  The modular data of premodular categories from Seifert fibered spaces can be realized using Temperley-Lieb-Jones categories and the ones from torus bundles over the circle are related to metaplectic categories.   We conjecture that a resulting premodular category is modular if and only if the three manifold is a $\bbZ_2$-homology sphere, and condensation of bosons in the resulting properly premodular categories leads to either modular or super-modular tensor categories.
\end{abstract}

\tableofcontents

\section{Introduction}

There are two parallel  universes in three dimensional topology for the last several decades that do not intersect much: the classical Thurston world and the quantum Jones world.  One famous conjecture that hints a deep connection of the two worlds is the volume conjecture.  Recently M-theory in physics suggests another surprising different connection: classical topological invariants such as Chern-Simons invariants of $\SL(2,\bbC)$-flat connections and $\SL(2,\bbC)$-adjoint Reidemeister torsions of a three manifold $X$ can be packaged together to produce a $(2+1)$-topological quantum field theory (TQFT) \cite{gang20}, which is essentially equivalent to a modular tensor category \cite{turaevbook}.  It is further conjectured in \cite{gang20} that every modular tensor category can be obtained from a three manifold and a semi-simple Lie group.  In this paper, we study this program mathematically, and provide strong support for such a program.  The program as outlined in \cite{gang20} produces an algorithm to generate the potential modular $T$-matrix and the quantum dimensions of a candidate modular data.  The modular $S$-matrix follows from essentially a trial-and-error procedure. We find premodular tensor categories that realize candidate modular data from Seifert fibered spaces and torus bundles over the circle that reveal many subtleties in the program. Our main result is a mathematical construction of the modular data of a premodular category from each Seifert fibered space with three singular fibers and some torus bundles over the circle with Thurston SOL geometry.  The modular data of the premodular categories from Seifert fibered spaces can be realized using Temperley-Lieb-Jones categories and the ones from torus bundles over the circle are related to metaplectic categories \cite{wangbook,metaplectic}.   A more general study of the torus bundle case is in \cite{cui2021torus}.  We conjecture that the resulting premodular category is modular if and only if the three manifold is a $\bbZ_2$-homology sphere, and condensation of bosons in the resulting properly premodular categories leads to either modular or super-modular tensor categories.

The program from three manifolds to modular tensor categories is a far-reaching progeny of the mysterious six-dimensional super-symmetric conformal field theories (SCFTs) spawned by M-theory. 
Our strong support for the program indirectly provides evidence for these 6d SCFTs.
The dimension reduction or compactification of these 6d SCFTs to 3d depends on a three manifold $X$, and in general the resulting theory $T(X)$ is a super-conformal field theory.  When $X$ is non-hyperbolic, it is argued in \cite{gang20} that $T(X)$ flows to a TQFT in the infrared limit and super-symmetry is decoupled, thus we obtain a (2+1)-TQFT labeled by $X$, hence a MTC $\mathcal{B}_X$.  The program outlined in \cite{gang20} centers on an algorithm to produce the quantum dimensions and topological twists of a MTC, and a trial-and-error algorithm for the modular $S$-matrix. The assumption on the three manifolds $X$ in \cite{gang20} includes that $X$ is non-hyperbolic and the $\SL(2,\bbC)$ representation variety of the fundamental group $\pi_1(X)$ consists of finitely many conjugacy classes that all could be conjugated into either $SU(2)$ or $\SL(2,\bbR)$ subgroups of $\SL(2,\bbC)$.  Our examples show that all but the non-hyperbolic assumption can be dropped.  One subtlety is that we need to use indecomposable reducible representations in our torus bundle over the circle examples.  We do not know whether or not MTCs could be constructed from hyperbolic three manifolds as the program as we formulated in this paper is more flexible.  The main difficulty for more examples lies in the explicit calculation of Chern-Simons (CS) invariant and adjoint Reidemeister torsion of flat connections.

An $\SL(2,\bbC)$-representation of $\pi_1(X)$ is the same as a flat connection of the trivial $\SL(2,\bbC)$-bundle.  There are two well-known invariants for a flat connection: the Chern-Simons (CS) invariant and the adjoint Reidemeister torsion.  Each flat connection that satisfies certain conditions would give rise to an anyon type and the Reidemeister torsion is essentially the quantum dimension and the CS invariant is the conformal weight of the anyon.  

For each Seifert fibered space with three singular fibers, we define a potential modular data inspired by the many examples in \cite{gang20}.  All those modular data can be realized by premodular categories obtained as a $\bbZ_2$-graded product of Temperley-Lieb-Jones categories.  
We expect that our results can be easily generalized to any number of the singular fibers if the adjoint Reidemeister torsions of the $\SL(2,\bbC)$ flat connections can be calculated because the CS invariants in this case are known.  It is not clear if there are new MTCs among our examples.  Going beyond Seifert fibered spaces, we analyze some torus bundles over the circle and identify the resulting premodular categories as the integral subcategories of $SO(N)_2$ for odd $N$.
An important observation for the connection to Temperley-Lieb-Jones categories for Seifert fibered spaces is a relation between the slope of a singular fiber and the order of the Kauffman variable $A$ in Temperley-Lieb-Jones theories \cite{wangbook}.  Essentially the slope of a singular fiber determines a root of unity $A$, which allows us to realize all the candidate modular data from Seifert fibered spaces with three singular fibers.  

The content of the paper is as follows.  In Sec. 2, we outline our version of the program taking into account the many subtleties that we encountered in our examples.  We also recall the definition of CS invariant and adjoint Reidemeister torsion, and collect some known results of CS invariants of Seifert fibered spaces.   In Sec. 3, we study the Seifert fibered spaces and carry out the necessary calculations of CS and torsion invariants for our examples, and do the same for torus bundles over the circle in Sec. 4.  Finally, in Sec. 5, we discuss some future directions and open questions.

\section{A program from $3$-manifolds to modular categories}\label{sec:backgrounds}

The proposed program in \cite{gang20} from three manifolds to MTCs came from physics, and the paper provides an algorithm to produce the potential modular $T$-matrix and all quantum dimensions of a candidate modular data from irreducible representations of the fundamental groups of three manifolds to $\SL(2,\bbC)$.  Our results in Sec. 3 and Sec. 4 that realize candidate modular data from Seifert fibered spaces and torus bundles over the circle reveal many subtleties in the program as outlined in \cite{gang20}. In this section, we follow the overall program of \cite{gang20} and make a number of improvements to reformulate mathematically the construction of candidate modular data from three manifolds taking into account these new subtleties. 

\subsection{Representation and character variety}

Suppose $X$ is an orientable connected closed $3$-manifold and $G$ is a semi-simple Lie group.  The set of representations of the fundamental group $\pi_1(X)$\footnote{We omit the irrelevant base point.} to $G$ consists of all group homomorphisms from $\pi_1(X)$ to $G$, denoted by $\Hom(\pi_1(X),G)$,  up to conjugation. The representation variety $\R(X,G)$ of $\pi_1(X)$ to $G$ is simply $\Hom(\pi_1(X),G)//G$---equivalence classes of representations up to conjugation.

In this paper, we will mainly consider the case $G=\SL(2,\bbC)$ and its higher dimensional irreducible representations $\Sym^j$ of dimension $j+1$.
Given such a representation $\rho: \pi_1(X) \rightarrow \SL(n,\bbC)$, its  character is the function on $\pi_1(X)$ given by $\chi_\rho(x)=\Tr(\rho(x))$ for $x\in \pi_1(X)$.  The character variety $\chi(X, \SL(n,\bbC))$ of $X$ consists of all such character functions.  We will also denote the representation variety $\R(X, \SL(2,\bbC))$ and character variety $\chi(X, \SL(2,\bbC))$ simply as $\R(X)$ and $\chi(X)$.  In this paper, the topology of the spaces of the representation and character varieties is not important.

There are three obvious nontrivial automorphisms of $\SL(2,\bbC)$ by sending an element $g\in \SL(2,\bbC)$ to its complex conjuagte $g^{*}$, its transpose followed by inverse ${(g^{t}})^{-1}$, and the composition $(g^{\dag})^{-1}$ of the previous two operations.  For each representation of $\pi_1(X)$ to $\SL(2,\bbC)$, post-composing with one of the three automorphisms of $\SL(2,\bbC)$ gives rise to another representation, hence representations in $\R(X)$ come in group of four in general.
Another obvious way to change a representation $\rho$ in $\R(X,G)$ is to tensor $\rho$ with a representation of $\pi_1(X)$ to the center $Z(G)$ of $G$.  Representations of $\pi_1(X)$ to the center $Z(G)$ are in one-one correspondence with cohomology classes in the cohomology group $H^1(X,Z(G))$.

\subsection{Non-hyperbolic three manifolds}\label{subsec:seifert}

The proposed program in \cite{gang20} and in this section is to produce modular tensor categories (MTCs) from closed three manifolds and show that each MTC can be obtained from at least one three manifold.  It is known that different three manifolds can lead to the same MTC.  As suggested in \cite{gang20}, we will focus on non-hyperbolic three manifolds.  There are seven non-hyperbolic geometries and six can be realized by Seifert fibered three manifolds with the exception SOL \cite{scott83}.  The geometry $\bbS^2\times \bbR$ is not useful for our purpose as we need representations from the fundamental group to $\SL(2,\bbC)$ with non-Abelian images.  We will mainly consider Seifert fibered spaces in this paper, but in Sec. \ref{SOL}, we will also study torus bundles over the circle with SOL geometry and more subtleties arise.

Seifert fibered three manifolds $X$ are those that can be foliated into disjoint union of circles and are completely enumerated \cite{orlikbook}.  In this paper, all our three manifolds are orientable, and we will denote the Seifert fibered spaces (SFSs) by the notation $X=\{b;(o,g);(p_1,q_1),(p_2,q_2),\cdots,(p_n,q_n)\}$ as explained below.  The quotient space of a SFS $X$, called the base orbifold $B$, by sending each circle, called a fiber, to a point is a topological surface. The symbol $(o,g)$ means that the base topological surface $B$ is an orientable closed surface of genus $g$.  

Each fiber has a product neighbourhood $D^2\times S^1$ in the SFS $X$ except $n$ singular fibers labeled by $(p_i,q_i), i=1,\cdots, n$.
The neighborhood of the $i$-th singular fiber is obtained from $D^2\times [0,1]$ by identifying the point $(x,0), x\in D^2$ with the point $(r_{a_i,p_i}(x),1)$, where $r_{a_i,p_i}$ is the rotation of the disk $D^2$ by the angle $2\pi a_i/p_i$, where $a_i \in \bbZ$ satisfies $a_iq_i = 1 \mod p_i$. The pair of coprime integers $(p_i,q_i)$ are the corresponding surgery coefficient.   The fundamental group of $X$ fits into a short exact sequence $1\rightarrow \pi_1(F) \rightarrow \pi_1(X) \rightarrow \pi_1^{orb}(B)\rightarrow 1$, where $\pi_1(F)\cong \bbZ$ for a regular fiber $F\cong \bbS^1$ and $\pi_1^{orb}(B)$ is the orbifold fundamental group of $B$ (not the same as the fundamental group $\pi_1(B)$ of the topological surface $B$ in general).  The integer $b$ in the notation is the obstruction class, which is also the order of the generator of $\pi_1(F)$ in $\pi_1^{orb}(B)$.  Since we consider SFSs as three manifolds up to homeomorphism rather than as fibered spaces, we may always set $b$ to $0$.

The fundamental group of  $X=\{b;(o,g);(p_1,q_1),(p_2,q_2),\cdots,(p_n,q_n)\}$ has a presentation
\begin{multline}\label{equ:SFS_pi1_general}
    \pi_1(X)=\langle a_j,b_j,x_i,h,\  j= 1, \cdots, g, i = 1, \cdots, n\ |\\ [a_j,h]=[b_j,h]=[x_i,h]=x_i^{p_i}h^{q_i}=1,\ x_1\,\cdots\,x_n[a_1,b_1]\,\cdots\,[a_g,b_g]=h^b\rangle.
\end{multline}
In particular, the fundamental group of $X=\{0;(o,0);(p_1,q_1),(p_2,q_2),(p_3,q_3)\}$ with base $\bbS^2$ and three singular fibers, denoted simply as $\{b;(p_1,q_1),(p_2,q_2),(p_3,q_3)\}$ sometimes,  is 
$$\pi_1(X)=\langle x_1,x_2,x_3,h|x_i^{p_i}h^{q_i}=1,x_ih=hx_i,x_1x_2x_3=h^b\rangle.
$$
The orientable SFS $\{0;(o,0);(p_1,q_1),(p_2,q_2),\cdots,(p_n,q_n)\}$ with base $\bbS^2$ and $n$ singular fibers has a surgery diagram shown in Fig. \ref{fig:my_label}.

\begin{figure}
    \centering
\begin{tikzpicture}[scale=0.7]
\draw (0,0) arc (-170:170:1 and 2);
\draw (3,0) arc (-170:170:1 and 2);
\draw (6,0) arc (-170:170:1 and 2);
\draw (-1,0.5)--(1.5,0.5);
\draw (2.5,0.5)--(4.5,0.5);
\draw (5.5,0.5)--(7.5,0.5);
\draw (8.5,0.5)--(9.5,0.5);
\draw (-1,0.5)--(-1,-3);
\draw (-1,-3)--(9.5,-3);
\draw (9.5,-3)--(9.5,0.5);
\node[below] at (1,-1.6) {$\frac{p_1}{q_1}$};
\node[below] at (4,-1.6) {$\frac{p_2}{q_2}$};
\node[below] at (7,-1.6) {$\frac{p_n}{q_n}$};
\node at (4,-3.5) {0};
\draw (5.5,1) node{$\cdots$};
\end{tikzpicture}
    \caption{Surgery link of Seifert fibered space with base $\bbS^2$}
    \label{fig:my_label}
\end{figure}
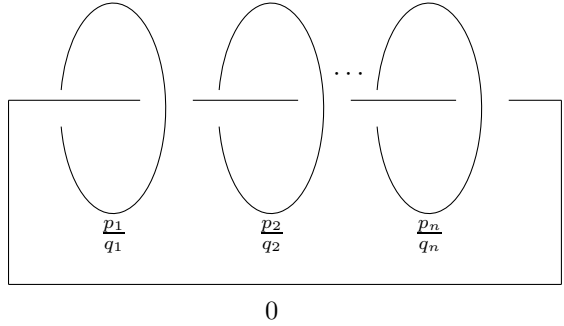

\subsection{Chern-Simons invariant}\label{subsec:CS_review}

Given an orientable connected closed three manifold $X$, a morphism $\rho$ of its fundamental group $\pi_1(X)$ to a semi-simple Lie group $G$ can be identified as the holonomy representation  of a flat connection $A_{\rho}$ on the trivial principal $G$-bundle over $X$.  Therefore, in the following we will use the terms a representation $\rho$ and a flat connection $A$  interchangeably via such an identification.  

Let $X$ be a closed 3-manifold and $\rho:\pi_1(X)\longrightarrow \SL(2,\mathbb{C})$ be a holonomy representation. Denote by $A_{\rho} $ the corresponding Lie algebra $\mathfrak{sl}(2,\mathbb{C})$-valued 1-form  on $X$. The Chern-Simons (CS) invariant of $\rho$ is defined as
\begin{equation}\label{equ:CS_integral_def}
    \CS(\rho)=\frac{1}{8\pi^2}\int_{X}\Tr(dA_{\rho}\wedge A_{\rho}+\frac{2}{3}A_{\rho}\wedge A_{\rho}\wedge A_{\rho}) \mod 1,
\end{equation}
where the integral with its coefficient in the front is   well-defined up to integers.

The CS  invariant $\CS(\rho)$ depends only on the character $\chi(\rho)$ of $\rho$ \cite{kirk93}, hence it descends from the representation variety $\R(X)$ to the character variety $\chi(X)$.

Auckly computed the CS invariant of SFSs for $\SU(2)$ representations in \cite{auckly94}. The CS invariant of SFSs for $\SL(2,\bbC)$ representations may be known to experts. However, to make the paper self-contained, we provide a proof to compute that using method from \cite{kirk93}.

\begin{proposition}\label{prop:SFS_general_CS}
Let $X=\{0;(o,g);(p_1,q_1),(p_2,q_2),\cdots,(p_k,q_k)\}$ be an SFS with the presentation of $\pi_1(X)$ given in Equation \ref{equ:SFS_pi1_general} with $b = 0$. 
Choose integers $s_j$ and $r_j$ such that $p_js_j - q_j r_j = 1$. Suppose $\rho:\pi_1(X)\rightarrow \SL(2,\mathbb{C})$ is non-Abelian such that $\Tr(\rho(x_j))=2\cos\frac{2\pi n_j}{p_j}$, then
\begin{align*}
\CS(\rho)=\left\{\begin{aligned}
\sum_{j=1}^k\ \frac{r_jn_j^2}{p_j} \mod 1,&&&\rho(h)=I
\\
\sum_{j=1}^k\ (\frac{r_jn_j^2}{p_j}-\frac{q_js_j}{4}) \mod 1,&&&\rho(h)=-I
\end{aligned}
\right.
\end{align*}
\end{proposition}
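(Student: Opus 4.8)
\emph{Approach.} The plan is to follow the surgery/gluing strategy of Kirk and Klassen \cite{kirk93}: decompose $X$ into a product piece and $k$ solid tori, choose paths of flat connections on each piece with explicitly controlled boundary holonomies, and let the Kirk--Klassen variation formula convert the Chern--Simons invariant into a sum of integrals over the gluing tori.

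\emph{Step 1: Decomposition.} Let $\Sigma_{g,k}$ denote a closed genus-$g$ surface with $k$ open disks removed, and set $M_0=\Sigma_{g,k}\times\bbS^1$. Present $X$ as $M_0\cup(V_1\cup\cdots\cup V_k)$ (standard for Seifert fibered spaces, see \cite{orlikbook}), where each $V_j\cong D^2\times\bbS^1$ is glued to the boundary torus $T_j=\partial_j\Sigma_{g,k}\times\bbS^1$ so that the meridian of $V_j$ is $p_jc_j+q_jf$ and the core is carried by $r_jc_j+s_jf$; here $c_j=\partial_j\Sigma_{g,k}\times\{\mathrm{pt}\}$ carries $x_j$, $f=\{\mathrm{pt}\}\times\bbS^1$ carries the regular fiber $h$, and $p_js_j-q_jr_j=1$. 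The relation $x_j^{p_j}h^{q_j}=1$ is exactly the statement that the meridian of $V_j$ bounds a disk, so this recovers $X$ (with $b=0$ no further filling is needed). The whole computation is then organized by the gluing matrices $\left(\begin{smallmatrix}p_j&q_j\\ r_j&s_j\end{smallmatrix}\right)\in\SL(2,\bbZ)$.

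\emph{Step 2: Paths of flat connections and the variation formula.} Since $\rho(h)=\pm I$ is central, the holonomy along every fiber is central; in particular $\rho|_{M_0}$, as a representation of $\pi_1(M_0)=\pi_1(\Sigma_{g,k})\times\bbZ$, is the tensor product of a representation of the free group $\pi_1(\Sigma_{g,k})$ with the central character $h\mapsto\rho(h)$, hence lies in the same path component of the moduli of flat connections on $M_0$ as the central connection $\epsilon$ sending $h\mapsto\rho(h)$ and everything else to $I$. I would choose a path $\rho_t$ of flat $\SL(2,\bbC)$-connections on $M_0$ from $\rho|_{M_0}$ to $\epsilon$ along which $\rho_t(h)\equiv\rho(h)$ is fixed and each $\rho_t(c_j)$ stays conjugate to $\mathrm{diag}(e^{2\pi i\theta_j(t)},e^{-2\pi i\theta_j(t)})$, with $\theta_j(1)=0$ and $\theta_j(0)$ the rotation number of $\rho(x_j)$, so that $2\cos(2\pi\theta_j(0))=\Tr\rho(x_j)$; here $\theta_j(0)=n_j/p_j$ in all cases except when $\rho(h)=-I$ and $q_j$ is odd, in which case $\rho(x_j)^{p_j}=-I$ forces $\theta_j(0)\in\tfrac{1}{2p_j}+\tfrac{1}{p_j}\bbZ$ and $n_j$ must be read as a half-integer. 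On each $V_j$ I take the corresponding path $\sigma_t^j$ of flat connections on the solid torus whose meridian holonomy is trivial and whose core holonomy is $\rho_t(x_j^{r_j}h^{s_j})$. Then I invoke the Kirk--Klassen formula: for a path of flat connections on a $3$-manifold $W$ with torus boundary $\bigsqcup_jT_j$, whose boundary holonomies are conjugate to $(\mathrm{diag}(e^{2\pi i\alpha_j},e^{-2\pi i\alpha_j}),\mathrm{diag}(e^{2\pi i\beta_j},e^{-2\pi i\beta_j}))$ in a fixed basis $(\mu_j,\lambda_j)$ of $H_1(T_j)$,
\begin{equation*}
\CS(A_1)-\CS(A_0)\equiv-2\sum_j\int_0^1\bigl(\alpha_j\,\dot\beta_j-\beta_j\,\dot\alpha_j\bigr)\,dt\pmod{1}.
\end{equation*}
Applying this to $M_0$ with $\rho_t$ and to each $V_j$ with $\sigma_t^j$, the Chern--Simons invariant of $\rho$ on $X$ becomes the Chern--Simons invariant of the central endpoint connection (which is computed directly: it vanishes when $\rho(h)=I$ and is an explicit abelian term when $\rho(h)=-I$) plus boundary integrals over the loops on each $T_j$ formed by concatenating the $M_0$-side and $V_j$-side boundary-holonomy paths. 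Transporting the rotation numbers of $(c_j,f)$ to the $(\mu_j,\lambda_j)$-basis of $V_j$ through $\left(\begin{smallmatrix}p_j&q_j\\ r_j&s_j\end{smallmatrix}\right)$ and carrying out the piecewise-linear integrals gives $\tfrac{r_jn_j^2}{p_j}$ for each loop when $\rho(h)=I$; when $\rho(h)=-I$, the extra constant rotation number $\tfrac12$ carried along $f$ produces the correction $-\tfrac{q_js_j}{4}$. Summing over $j$ and reducing modulo $1$ yields the stated formula. (As a consistency check, when $\rho(h)=I$ the sum $\sum_jr_jn_j^2/p_j$ is literally a sum of lens-space Chern--Simons invariants of the surgery solid tori, matching the plumbing picture and Auckly's $\SU(2)$ computation \cite{auckly94}.)

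\emph{Main obstacle.} The delicate points are: (i) constructing the path $\rho_t$ on $M_0$ with a well-defined, continuously varying eigenvalue parameter $\theta_j(t)$ while $\rho$ is only given up to conjugacy and is non-Abelian --- this forces one to work after conjugating $\rho$ into $\SU(2)$ or $\SL(2,\bbR)$ (which the standing hypotheses on our three-manifolds permit) and to make coherent branch choices for the rotation numbers; (ii) the parity bookkeeping in the $\rho(h)=-I$ case, where for odd $q_j$ the parameter $n_j$ is a half-integer, the endpoint connection is a nontrivial abelian connection rather than the trivial one, and one must verify that $-q_js_j/4$ is exactly what pushing the half-integer $f$-rotation number through the gluing matrix contributes; and (iii) pinning down every orientation and normalization constant in the Kirk--Klassen integral, since the precise coefficients $r_j/p_j$ and $q_js_j/4$ (rather than, say, $s_j/p_j$ or $r_js_j/4$) are entirely an artifact of that arithmetic. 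I expect (i) together with (iii) to be where most of the work lies.
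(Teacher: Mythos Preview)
Your approach is essentially the paper's: decompose $X$ as $(\Sigma_{g,k}\times S^1)\cup\bigcup_j V_j$ and compute $\CS(\rho)$ via the Kirk--Klassen pairing of $c_Y(\rho)$ against the $c_{V_j}(\rho)$ in the bundle $E(\partial Y)$. The only difference is packaging: rather than building explicit paths $\rho_t$ and evaluating the variation integral, the paper quotes closed formulas for $c_Y(\rho)$ on $F\times S^1$ (their Fact~2) and for $c_{V_j}(\rho)$ on solid tori (Fact~1, valid for any Abelian $\SL(2,\bbC)$ representation), so no path needs to be exhibited and no $\SU(2)/\SL(2,\bbR)$ hypothesis is used---your obstacles (i)--(iii) then collapse to a short algebraic manipulation of $[\alpha,\beta;z]$-coordinates under the $H$-action and the change-of-basis matrix $\left(\begin{smallmatrix}p_j&r_j\\ q_j&s_j\end{smallmatrix}\right)$, with both cases $\rho(h)=\pm I$ handled at once by writing $\rho(h)=e^{2\pi i m}I$, $m\in\{0,\tfrac12\}$, and noting $\alpha_j:=n_j+q_jm\in\bbZ$.
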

\begin{remark}
The formula for the CS invariant in Proposition \ref{prop:SFS_general_CS} differs from that in \cite{auckly94} with a negative sign. We believe this discrepancy is due to conventions.
\end{remark}
Before proving the proposition, we recall some facts in \cite{kirk93}

Let $T$ be a torus and consider $\chi(T)$, the character variety of $T$ to $\SL(2,\bbC)$. It is direct to see that $\chi(T)$ can be identified with $\Hom(\pi_1(T), \bbC^*)/\sim$, where $f \sim g$ if $f = g^{\pm 1}$ where $g^{-1}$ means point-wise inverse of $g$. We now describe a `coordinate-version' of $\chi(T)$.

Let $H$ be a group with the presentation,
$$H=\langle x,y,b\ |\ [x,y]=bxbx=byby=b^2=1\rangle,$$
and define an action of $H$ on $\mathbb{C}^2$ by
$$x(\alpha,\beta)=(\alpha+1,\beta),\ y(\alpha,\beta)=(\alpha,\beta+1),\ b(\alpha,\beta)=(-\alpha,-\beta).
$$
Denote the image of $(\alpha, \beta) \in \bbC^2$ in the quotient space $\bbC^2/H$ by $[\alpha,\beta]$.
Let $\Vec{v} = (v_1, v_2)$ be any $\bbZ$-basis of $H_1(T)$, and define the map,
\begin{equation*}
    f_{\Vec{v}}\colon \bbC^2/H \rightarrow \chi(T),
\end{equation*}
such that $f_{\Vec{v}}[\alpha, \beta] \in \chi(T)$ sends
\begin{equation*}
    v_1  \mapsto e^{2\pi i \alpha}, \quad v_2 \mapsto e^{2\pi i \beta}.
\end{equation*}
It can be checked that $f_{\Vec{v}}$ is a homeomorphism. A representation of $\pi_1(T)$ that induces the character $f_{\Vec{v}}[\alpha, \beta]$ is given by,
\begin{equation*}
    v_1 \mapsto 
    \begin{pmatrix}
    e^{2\pi i\alpha}&0\\0&e^{-2\pi i\alpha}
    \end{pmatrix}, \quad
    v_2 \mapsto
    \begin{pmatrix}
    e^{2\pi i\beta}&0\\0&e^{-2\pi i\beta}
    \end{pmatrix}.
\end{equation*}
Furthermore, the homeomorphism $f_{\Vec{v}}$ is natural in the following sense. Let $\Vec{w}$ be another basis such that $\Vec{w} = \Vec{v}A$ for some $A \in \GL(2,\bbZ)$ (viewing $\Vec{w}$ and $\Vec{v}$ as row vectors), and define the map $\Phi_{\Vec{v},\Vec{w}}\colon \bbC^2 \to \bbC^2$ by right multiplying (row) vectors of $\bbC^2$ by $A$ on the right. Then $\Phi_{\Vec{v},\Vec{w}}$ induces a homeomorphism, still denoted by $\Phi_{\Vec{v},\Vec{w}}$, from $\bbC^2/H$ to $\bbC^2/H$, and the following diagram commutes,
\begin{equation*}
    \begin{tikzcd}
    \bbC^2/H\arrow[r, "\Phi_{\Vec{v},\Vec{w}}"]\arrow[d, "f_{\Vec{v}}"] & \bbC^2/H \arrow[dl,"f_{\Vec{w}}"] \\
    \chi(T)&
    \end{tikzcd}
\end{equation*}
Hence, we think of each $\bbC^2/H$ with a choice of basis $\Vec{v}$ as a coordinate realization of $\chi(T)$. In fact, $\chi(T)$ is isomorphic to the direct limit\footnote{Here all maps involved are isomorphisms, so the notion of direct limit and inverse limit do not make a difference.} of $\{(\bbC^2/H)_{\Vec{v}}, \ \Phi_{\Vec{v},\Vec{w}}\}$,
\begin{equation*}
    \chi(T) \simeq \lim_{\longrightarrow}\, (\bbC^2/H)_{\Vec{v}},
\end{equation*}
where $(\bbC^2/H)_{\Vec{v}}$ is a copy of $\bbC^2/H$ indexed by $\Vec{v}$.

Next, we introduce a $\bbC^*$ bundle over $\chi(T)$. 
Define an action of $H$ on $\mathbb{C}^2\times\mathbb{C}^*$ lifting that on $\mathbb{C}^2$ by
\begin{align*}
   x(\alpha,\beta;z)&=(\alpha+1,\beta;ze^{2\pi i\beta}), \\ y(\alpha,\beta;z)&=(\alpha,\beta+1;ze^{-2\pi i\alpha}),\\ b(\alpha,\beta;z)&=(-\alpha,-\beta;z). 
\end{align*}
The canonical projection $\mathbb{C}^2\times\mathbb{C}^* \to \mathbb{C}^2$ induces a projection
\begin{align*}
    p \colon \mathbb{C}^2\times\mathbb{C}^*/H \to \mathbb{C}^2/H,
\end{align*}
which makes $\mathbb{C}^2\times\mathbb{C}^*/H$ a $\bbC^*$ bundle over $\mathbb{C}^2/H$. Given two bases $\Vec{v}, \ \Vec{w}$ of $H_1(T)$ with $\Vec{w} = \Vec{v}A$, $\Phi_{\Vec{v}, \Vec{w}}$ can be covered by a bundle isomorphism. Explicitly, define $\tilde{\Phi}_{\Vec{v}, \Vec{w}}\colon \mathbb{C}^2\times\mathbb{C}^*/H \to \mathbb{C}^2\times\mathbb{C}^*/H$ which maps $[\alpha,\beta;z]$ to $[(\alpha,\beta)A; z^{\det(A)}]$. Then the following diagram commutes,
\begin{equation}\label{equ:commu_diag}
    \begin{tikzcd}
    (\mathbb{C}^2\times\mathbb{C}^*/H)_{\Vec{v}} \arrow[d, "p"]\arrow[r, "\tilde{\Phi}_{\Vec{v}, \Vec{w}}"] & (\mathbb{C}^2\times\mathbb{C}^*/H)_{\Vec{w}}\arrow[d, "p"] \\
    (\mathbb{C}^2/H)_{\Vec{v}}\arrow[r, "\Phi_{\Vec{v}, \Vec{w}}"] & (\mathbb{C}^2/H)_{\Vec{w}}
    \end{tikzcd}
\end{equation}
Let $\tilde{E}(T)$ be the direct limit of $\{(\mathbb{C}^2\times\mathbb{C}^*/H)_{\Vec{v}},\ \tilde{\Phi}_{\Vec{v}, \Vec{w}} \}$.
Then Equation \ref{equ:commu_diag} induces a map $ p\colon \tilde{E}(T) \to \chi(T)$ which makes $\tilde{E}(T)$ a $\bbC^*$ bundle over $\chi(T)$, and the diagram below commutes,
\begin{equation*}
\begin{tikzcd}
    \tilde{E}(T)\arrow[d, "p"] & (\mathbb{C}^2\times\mathbb{C}^*/H)_{\Vec{v}} \arrow[d, "p"]\arrow[l,""]\\
    \chi(T) & (\mathbb{C}^2/H)_{\Vec{v}}\arrow[l,"f_{\Vec{v}}"]
\end{tikzcd}
\end{equation*}
We often represent an element of $\tilde{E}(T)$ by a `coordinate' $[\alpha,\beta;z]_{\Vec{v}}$ with respect to a basis $\Vec{v}$. Changing the basis to $\Vec{w} = \Vec{v}A$ induces the equality
\begin{equation*}
    [\alpha,\beta;z]_{\Vec{v}} = [(\alpha,\beta)A;z^{\det(A)}]_{\Vec{w}},
\end{equation*}
and when the bases involved are clear from the context, we will omit them.

We also need an `orientation-version' of $\tilde{E}(T)$. Now assume $T$ is {\it oriented}, and define $E(T)$ to be the direct limit of $\{(\mathbb{C}^2\times\mathbb{C}^*/H)_{\Vec{v}},\ \tilde{\Phi}_{\Vec{v}, \Vec{w}} \}$ where the limit is taken only over positive bases $\Vec{v}$ of $H_1(T)$, namely, those $\Vec{v}$ such that $v_1 \wedge v_2$ matches the orientation of $T$. Apparently, $E(T)$ and $E(-T)$ are both bundles over $\chi(T)$, and are both isomorphic to $\tilde{E}(T)$. However, it will be of conceptual convenience for latter calculations to distinguish $E(T)$ from $E(-T)$. 

There is a fiber-wise pairing $\langle\ ,\ \rangle$ defined on $ E(T) \times E(-T)$ as follows. Given $e \in E(T), \ e' \in E(-T)$ such that $p(e) = p(e')$, choose an arbitrary positive basis $\Vec{v} = (v_1,v_2)$ of $H_1(T)$ and hence $\Vec{v}':= (-v_1, v_2)$ is a positive basis of $H_1(-T)$, and write $e = [\alpha,\beta;z]_{\Vec{v}}$, $e' = [-\alpha, \beta;z']_{\Vec{v}'}$ (or $e' = [\alpha, -\beta;z']_{-\Vec{v}'}$). Then $\langle e,e'\rangle:= zz'$. It can be checked that the pairing is well defined.

Lastly, the above notions can be generalized to multiple tori in a natural way. Let $S = \sqcup_{i=1}^k T_i$ be a disjoint union of $k$ oriented tori. Then $\chi(S) = \chi(T_1) \times \cdots \times \chi(T_k)$. The group $H^{k}$ acts on $(\bbC^2)^{ k}$ component-wise and the quotient is a `coordinate-version' of $\chi(S)$. The action of $H^{k}$ can also be lifted to $(\bbC^2)^{ k} \times \bbC^*$ where the $i$-th component $H_i$ in $H^{k}$ acts on the $i$-th copy of $\bbC^2$ in $(\bbC^2)^{ k}$ times $\bbC^*$, and $E(T)$ is the quotient of $(\bbC^2)^{ k} \times \bbC^*$ by this action. For $n \leq k$, similar to the pairing above, there is a generalized `pairing':
\begin{equation*}
    E(T_1 \sqcup \cdots \sqcup T_k) \times E(-T_1 \sqcup \cdots \sqcup -T_m) \to E(T_{m+1} \sqcup \cdots \sqcup T_k) .  
\end{equation*}

With the above notations, we recall several theorems in \cite{kirk93}. Let $X$ be an oriented compact 3-manifold with toral boundaries $\partial X=\sqcup_{i=1}^k T_i$ and $\rho\colon \pi_1(X) \to \SL(2,\bbC)$ be a holonomy representation. It is well-known that $\CS(\rho)$ in Equation \ref{equ:CS_integral_def} is not well defined since $X$ has boundary. Let 
\begin{equation*}
    c_X(\rho) = e^{2\pi i\, \CS(\rho)}.
\end{equation*}

\begin{theorem}[Theorem 3.2 of \cite{kirk93}]\label{thm:kirk1}
The Chern-Simons invariant defines a lifting $c_X:\chi(X)\longrightarrow E(\partial X)$ of the restriction map $r$ from the character variety of $X$ to the character variety of $\partial X$,
\begin{equation*}
    \begin{tikzcd}
          & E(\partial X)\arrow[d, "p"] \\
\chi(X)\arrow[ru, "c_X"]\arrow[r, "r"]   & \chi(\partial X)   
    \end{tikzcd}
\end{equation*}
Moreover, if $Y = X_1\cup X_2$ is a closed oriented $3$-manifold such that $X_1$ and $X_2$ are glued along toral boundaries $\partial X_1 = -\partial X_2$, then for $\chi\in\chi(Y)$, we have
$$e^{2\pi i\, \CS(\chi)}=\langle c_{X_1}(\chi_1),c_{X_2}(\chi_2)\rangle,
$$
where $\chi_i$ denotes the restriction of $\chi$ on $X_i$.
\end{theorem}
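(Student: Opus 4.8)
The plan is to base everything on the first-variation formula for the Chern--Simons functional, treating the bundle $E(\partial X)$ together with its $H$-action as nothing more than a bookkeeping device for the boundary term that this formula produces. For a smooth path $A_t$ of $\mathfrak{sl}(2,\bbC)$-connections on $X$, a standard Stokes computation gives, up to the normalization $\frac{1}{8\pi^2}$,
\begin{equation*}
\frac{d}{dt}\int_X \Tr\!\big(dA_t\wedge A_t+\tfrac{2}{3}A_t\wedge A_t\wedge A_t\big)=2\int_X \Tr(F_{A_t}\wedge \dot A_t)+\int_{\partial X}\Tr(A_t\wedge \dot A_t),
\end{equation*}
so that along a path of \emph{flat} connections the interior term drops and only the boundary integral survives. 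The whole content of the theorem is that this boundary integral is exactly the logarithmic derivative of the parallel transport in $E(\partial X)$ determined by the $H$-action.

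To construct $c_X$, I would represent $\chi\in\chi(X)$ by a flat connection $A$ and choose a path $A_t$ of flat connections from a fixed reference to $A$. Over each boundary torus $T_i$ I set the $(\alpha,\beta)$-coordinates of $c_X(\chi)$ to be the logarithms of the eigenvalues of the boundary holonomies along a positive basis $\Vec{v}$ of $H_1(T_i)$, and I let the $z$-coordinate record $e^{2\pi i\,\CS}$ accumulated along the path via the boundary integral above. The identity $p\circ c_X=r$ is then immediate, since the $(\alpha,\beta)$-data is by construction the restricted boundary character. The substance is well-definedness: two paths with the same endpoints differ by a loop in $\chi(\partial X)$, and the two accumulated Chern--Simons values differ by $\exp\!\big(2\pi i\oint \Tr(a\wedge \dot a)\big)$ around that loop. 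I must show this equals the holonomy of the flat structure on $E(\partial X)$, so that both candidate lifts name the same point of the total space.

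This matching is the main obstacle, and it is a purely local computation on a single torus $T$. The nontrivial loops in $\chi(T)=\bbC^2/H$ are precisely the images of the integer translations $\alpha\mapsto\alpha+1$, $\beta\mapsto\beta+1$, which correspond to large gauge transformations on a collar $T\times[0,1]$ of the form $\mathrm{diag}(e^{2\pi i n s},e^{-2\pi i n s})$. I would evaluate $\int_{\partial X}\Tr(a\wedge\dot a)$ in the diagonal model in which the holonomies along $\Vec{v}=(v_1,v_2)$ are $e^{2\pi i\alpha}$ and $e^{2\pi i\beta}$, and verify that the resulting exponentials reproduce exactly
\begin{equation*}
x(\alpha,\beta;z)=(\alpha+1,\beta;ze^{2\pi i\beta}),\qquad y(\alpha,\beta;z)=(\alpha,\beta+1;ze^{-2\pi i\alpha}).
\end{equation*}
Pinning down the factors of $2\pi i$ and the signs, and checking compatibility under a change of basis $\Vec{w}=\Vec{v}A$ so as to reproduce the transition $[\alpha,\beta;z]_{\Vec v}=[(\alpha,\beta)A;z^{\det A}]_{\Vec w}$ — with the $\det A$ encoding the orientation-reversal behaviour of the boundary term — is where all the care is needed. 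Once this is confirmed, the flat transport in $E(\partial X)$ and the Chern--Simons accumulation agree on every loop, so $c_X$ is a well-defined lift of $r$.

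For the gluing formula I would use additivity of the defining integral: for a flat connection $A$ on the closed manifold $Y=X_1\cup X_2$ restricting to $A_1,A_2$, one has $\int_Y cs(A)=\int_{X_1}cs(A_1)+\int_{X_2}cs(A_2)$. Choosing a path of flat connections on $Y$ ending at $A$ and restricting it to the two pieces, the variation formula produces two boundary integrals over $\partial X_1=-\partial X_2$; being taken over oppositely oriented copies of the same torus, they cancel, leaving precisely the $\bbZ$-ambiguous Chern--Simons number of the closed manifold $Y$, whose exponential $e^{2\pi i\,\CS(\chi)}$ is well-defined. Tracking this through coordinates, the product of the two $z$-coordinates is exactly $e^{2\pi i\,\CS(\chi)}$. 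Since the fiberwise pairing $\langle[\alpha,\beta;z]_{\Vec v},[-\alpha,\beta;z']_{\Vec v'}\rangle=zz'$ is precisely multiplication of these $z$-coordinates with the orientation flip built into the dual bases, this yields $e^{2\pi i\,\CS(\chi)}=\langle c_{X_1}(\chi_1),c_{X_2}(\chi_2)\rangle$, completing the argument.
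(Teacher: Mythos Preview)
The paper does not prove this statement at all: it is quoted verbatim as Theorem~3.2 of \cite{kirk93} and used as a black box in the subsequent computation of $\CS$ for Seifert fibered spaces. There is therefore no ``paper's own proof'' to compare your proposal against.

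That said, your sketch has a real gap. You define $c_X(\chi)$ by choosing ``a path $A_t$ of \emph{flat} connections from a fixed reference to $A$'' and integrating only the boundary term. This presupposes that the moduli space of flat $\SL(2,\bbC)$-connections on $X$ is path-connected through flat connections, which is false in general (and indeed fails already for the Seifert fibered spaces treated later in the paper, whose character varieties are finite sets of points). If no such flat path exists, your construction does not even produce a value. The fix is either to allow an arbitrary path in the contractible space of \emph{all} connections and keep the curvature term $\int_X\Tr(F_{A_t}\wedge\dot A_t)$, or---as in the original Kirk--Klassen argument---to define $c_X(\rho)$ directly as $e^{2\pi i\,\CS(A)}$ for a flat representative $A$ put in normal form on a collar of $\partial X$, and then check that the ambiguity in this choice (gauge transformations, different normal forms) changes the value by exactly the $H$-action factors $e^{\pm 2\pi i\alpha}$, $e^{\pm 2\pi i\beta}$ defining $E(\partial X)$. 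Either route reduces to the same local computation on a single torus that you describe, so the rest of your outline (matching the $H$-action, the basis-change rule, and the gluing via additivity of the $\CS$ integral) is on the right track once the definition is repaired.
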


The following theorem is also due to  \cite{kirk93} which the authors proved for the case of $\SU(2)$ representations (Theorem 2.7), but an almost identical proof also works for $\SL(2,\bbC)$ representations. 
\begin{theorem}\label{thm:kirk2}
Let X be an oriented 3-manifold with toral boundaries $\partial X=\sqcup_{i=1}^kT_i$ and $\rho(t)\colon \pi_1(X)\rightarrow \SL(2,\mathbb{C})$ be a path of representations. Let $(\alpha_i(t),\beta_i(t)) $ be a lift of  $\chi \circ \rho(t)|_{T_i}$ to $\mathbb{C}^{2}$ with respect to some basis of $H_1(T_i)$. Suppose
$$c_X(\rho(t))=[\alpha_1(t),\beta_1(t),\cdots,\alpha_k(t),\beta_k(t);z(t)]
$$ 
Then
$$z(1)z(0)^{-1}=\exp\left(2\pi i\sum_{j=1}^k\int_0^1(\alpha_j\frac{d\beta_j}{dt}-\beta_j\frac{d\alpha_j}{dt})\right)
$$
In particular, if $\rho(1)$ is the trivial representation, then
$$c_{X}(\rho(0))=\biggl[\alpha_1(0),\beta_1(0),\cdots,\alpha_k(0),\beta_k(0);\exp\bigl(-2\pi i\sum_{j=1}^k\int_0^1(\alpha_j\frac{d\beta_j}{dt}-\beta_j\frac{d\alpha_j}{dt})\bigr)\biggr]
$$
\end{theorem}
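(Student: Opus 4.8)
\emph{Sketch of proof.} The plan is to run Kirk and Klassen's argument for the $\SU(2)$ case (Theorem~2.7 of \cite{kirk93}) essentially verbatim, its content being the first-variation formula for the Chern--Simons functional on a $3$-manifold with boundary; nothing in it uses compactness of the structure group. Given the path $\rho(t)$, realise it by a smooth path of flat $\mathfrak{sl}(2,\bbC)$-connections $A_t$ on $X$ with holonomy $\rho(t)$, chosen so that over a fixed collar $T_j\times[0,1)$ of each boundary torus $A_t$ equals the standard diagonal model $A_t|_{T_j}=2\pi i\,(\alpha_j(t)\,dx_j+\beta_j(t)\,dy_j)\,H$, where $H=\mathrm{diag}(1,-1)$ and $(dx_j,dy_j)$ is the coframe dual to the chosen basis of $H_1(T_j)$; this is possible because $\rho(t)|_{T_j}$ has character $f_{\vec v}[\alpha_j(t),\beta_j(t)]$ and so (away from parabolic holonomy) is conjugate to the diagonal representation with those eigenvalues. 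By the construction of $c_X$ the $\bbC^*$-coordinate is $z(t)=e^{2\pi i\,\theta(t)}$ with $\theta(t)=\tfrac1{8\pi^2}\int_X\Tr\!\big(A_t\wedge dA_t+\tfrac23 A_t\wedge A_t\wedge A_t\big)$, the Chern--Simons integral of $A_t$ in this boundary gauge.

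Now differentiate. For any invariant form $\Tr$ (here the complex trace on $\mathfrak{sl}(2,\bbC)$) one has $\delta\theta=\tfrac1{4\pi^2}\int_X\Tr(\delta A\wedge F_A)+\tfrac1{8\pi^2}\int_{\partial X}\Tr(\delta A\wedge A)$; since each $A_t$ is flat the bulk term vanishes, so $\tfrac{d\theta}{dt}=\tfrac1{8\pi^2}\sum_j\int_{T_j}\Tr(\dot A_t\wedge A_t)$. On $T_j$, $\dot A_t=2\pi i(\dot\alpha_j\,dx_j+\dot\beta_j\,dy_j)H$, so $\Tr(\dot A_t\wedge A_t)|_{T_j}=(2\pi i)^2\Tr(H^2)\,(\dot\alpha_j\,dx_j+\dot\beta_j\,dy_j)\wedge(\alpha_j\,dx_j+\beta_j\,dy_j)=-2(2\pi i)^2(\alpha_j\dot\beta_j-\beta_j\dot\alpha_j)\,dx_j\wedge dy_j$, and integrating (with $\int_{T_j}dx_j\wedge dy_j=1$ for a positively oriented basis) gives $8\pi^2(\alpha_j\dot\beta_j-\beta_j\dot\alpha_j)$. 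Hence $\tfrac{d\theta}{dt}=\sum_j(\alpha_j\dot\beta_j-\beta_j\dot\alpha_j)$, so $\theta(1)-\theta(0)=\sum_j\int_0^1(\alpha_j\dot\beta_j-\beta_j\dot\alpha_j)\,dt$ and $z(1)z(0)^{-1}=\exp\!\big(2\pi i\sum_j\int_0^1(\alpha_j\dot\beta_j-\beta_j\dot\alpha_j)\,dt\big)$. If $\rho(1)$ is trivial one may take $A_1\equiv 0$, so $\theta(1)=0$, $z(1)=1$, and the displayed formula for $c_X(\rho(0))$ follows.

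The two places needing care are exactly where the passage from $\SU(2)$ to $\SL(2,\bbC)$ is not purely cosmetic. First, producing the path $A_t$: one needs it flat on $X$ with the prescribed holonomy, literally in the diagonal model on each collar, and smooth in $t$; for $\SL(2,\bbC)$ the parameters $\alpha_j(t),\beta_j(t)$ are complex, and — unlike in $\SU(2)$ — a boundary holonomy can be parabolic rather than diagonalisable. This is dealt with by a continuity argument: the parabolic locus of each $\chi(T_j)$ is proper and closed, on it the boundary holonomies have trace $\pm2$ so the (locally constant, half-integral) $\alpha_j,\beta_j$ have zero derivative and contribute nothing, and both sides of the identity vary continuously with the path, so one may assume the boundary restrictions meet the parabolic locus at most at isolated times. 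Second, the sign of $\int_{T_j}dx_j\wedge dy_j$ is fixed by the orientation conventions for $\partial X$ and for the basis of $H_1(T_j)$; using a positively oriented basis makes it $+1$, and the resulting identity is then basis-independent since a change of basis by $A\in\GL(2,\bbZ)$ multiplies the integrand by $\det A$ and simultaneously sends $z$ to $z^{\det A}$. The continuity argument around parabolic boundary holonomy is the only real (and still routine) obstacle.
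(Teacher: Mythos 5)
Your proposal is correct and is exactly the argument the paper has in mind: the paper gives no proof of this theorem, stating only that Kirk--Klassen's $\SU(2)$ proof of their Theorem~2.7 (the first-variation formula for the Chern--Simons functional along a path of flat connections in diagonal boundary gauge) ``also works for $\SL(2,\bbC)$ representations,'' and your write-up carries out precisely that adaptation, with the boundary-term computation matching the stated formula. You also correctly identify and dispose of the one point that is genuinely new over $\SU(2)$ --- parabolic boundary holonomy, where the diagonal collar model is unavailable --- via the continuity argument on the (discrete) parabolic locus of $\chi(T_j)$.
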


The following two facts are proved for $\SU(2)$ representations in \cite{kirk93} (Theorems 4.1 and 4.2, respectively). Similar methods combined with Theorems \ref{thm:kirk1} and \ref{thm:kirk2} above show that they also hold for $\SL(2,\bbC)$ representations.

\vspace{0.2cm}
\noindent\textbf{Fact 1} Let $X$ be an oriented 3-manifold with toral boundaries $\partial X=\sqcup_{i=1}^nT_i$. Assume $H_1(X)$ is torsion free. Choose a positive basis $(\mu_i,\lambda_i)$ for $H_1(T_i)$. Let $\{x_j\ | \ j=1,\cdots,m\}$ be a basis of $H_1(X)$ and $\mu_i=\sum a_{ij}x_j,\lambda_i=\sum b_{ij}x_j$. Suppose that $\rho:\pi_1(X)\rightarrow \SL(2,\mathbb{C})$ is an Abelian representation and  $\Tr(\rho(x_j))=e^{2\pi i \gamma_j} + e^{-2\pi i \gamma_j}$ for some $\gamma_j\in\mathbb{C}$. Then
$$c_X(\rho)=\left[\ \sum a_{1j}\gamma_j,\ \sum b_{1j}\gamma_j,\ \cdots,\ \sum a_{nj}\gamma_j,\ \sum b_{nj}\gamma_j;\ 1\ \right]
$$

\noindent\textbf{Fact 2 } Let $F$ be a genus $g$ oriented surface with $k$ punctures. The fundamental group of $F$ has the presentation,
$$\pi_1(F)=\langle a_1,b_1,\cdots,a_g,b_g,x_1,\cdots,x_k\ |\ [a_1,b_1]\cdots [a_g,b_g]x_1 \cdots x_k=1\rangle,
$$
where $x_j$ corresponds to the oriented boundary (induced from $F$) of the $j$-th puncture. Let $Y = F \times S^1$ be endowed with the product orientation and let $\tilde{h} = * \times S^1$ be the central element of $\pi_1(Y)$ corresponding to the oriented  $S^1$ component. Then $\partial Y = \sqcup_{j=1}^k T_j$ with $T_j$ the torus corresponding to the $j$-th puncture and $(x_j, \tilde{h})$ is a positive basis for $H_1(T_j)$. Suppose $\rho:\pi_1(Y)\longrightarrow \SL(2,\mathbb{C})$ is a non-Abelian representation, which implies  $\Tr(\rho(\tilde{h}))=2\cos2\pi\beta$ for some $\beta \in \{0, \frac{1}{2}\}$. Suppose $\Tr(\rho(x_j))=e^{2\pi i \alpha_j} + e^{-2\pi i \alpha_j}$ for some $\alpha_j \in \bbC$. Then
$$c_Y(\rho)=\biggl[\alpha_1,\beta,\cdots,\alpha_n,\beta;\exp\bigl(-2\pi i\beta\sum_{j=1}^k\alpha_j\bigr)\biggr].
$$
Note that $c_Y(\rho)$ does not change under the replacement of some $\alpha_j$ by $-\alpha_j$.

The rest of the subsection is devoted to the proof of Proposition \ref{prop:SFS_general_CS}.
\begin{proof}
Let $Y = F \times S^1$ be as in \textbf{Fact 2} above with the chosen generators $x_j$ and $\tilde{h}$. Set $h = \tilde{h}^{-1}$. Then $X$ is obtained from $Y$ by gluing $k$ solid tori where the $j$-th solid torus $A_j$ is glued along $T_j$ by sending the meridian to $x_j^{p_j}h^{q_j}$. The generators $x_j$ and $h$ match those as presented in Equation \ref{equ:SFS_pi1_general}.
Choose a meridian-longitude pair $(\mu_j, \lambda_j)$ for $A_j$ such that $(\mu_j, \lambda_j)$ is a positive basis of $H_1(\partial A_j)$. The gluing of $A_j$ to $Y$  provides the transition of basis,
\begin{equation*}
    (\mu_j, \lambda_j) = (x_j,h)\,
    \begin{pmatrix}
    p_j & r_j \\
    q_j & s_j 
    \end{pmatrix}.
\end{equation*}
Since $\rho$ is non-Abelian, $\rho(h)$ is $\pm I$. By assumption,
\begin{alignat*}{3}
\Tr(\rho(x_j)) &= \exp(\frac{2\pi i n_j}{p_j} )+\exp(-\frac{2\pi i n_j}{p_j} ), \quad& \Tr(\rho(h)) &= 2\cos (2\pi m), \ m = 0, \frac{1}{2}.\\
\end{alignat*}
Therefore,
\begin{align*}
c_{Y}(\rho)&=\bigl[\frac{n_1}{p_1}, -m,\cdots,\frac{n_k}{p_k},-m;\exp(2\pi i\,m\sum_{j=1}^k\frac{n_j}{p_j})\bigr]_{(x_1,-h;\cdots;x_k,-h)}
\\
c_{A_j}(\rho)&=[0,\frac{r_jn_j}{p_j}+s_j m;1]_{(\mu_j,\lambda_j)} \\
&= [-q_j(\frac{r_jn_j}{p_j}+s_j m) , r_jn_j+s_jp_jm;1]_{(x_j,h)} \\
&=[\frac{n_j}{p_j}-s_j\alpha_j, m+ r_j\alpha_j;1], \ (\text{setting } \alpha_j = n_j+q_jm)\\
&=\biggl[\frac{n_j}{p_j}-s_j\alpha_j, m; \exp\bigl(2\pi i(r_j\alpha_j)(\frac{n_j}{p_j}-s_j\alpha_j) \bigr)\biggr]\\
&=\biggl[\frac{n_j}{p_j}, m; \exp\bigl(2\pi i(r_j\alpha_j)(\frac{n_j}{p_j}-s_j\alpha_j)+ 2\pi i (s_j\alpha_j)m \bigr)\biggr]\\
\end{align*}
Note that the relation $x_j^{p_j} h^{q_j} = 1$ implies that $\alpha_j$ must be an integer. Applying the pairing on $c_Y(\rho)$ and each $c_{A_j}(\rho)$ one by one, we obtain,
\begin{align*}
    \CS(\rho) &= \sum_{j = 1}^k (r_j \alpha_j \frac{n_j}{p_j} + s_j\alpha_jm + m \frac{n_j}{p_j}) \\
    &= \sum_{j = 1}^k \bigl(\frac{r_jn_j^2}{p_j} + s_jm(n_j+\alpha_j)\bigr)\\
    &= \sum_{j = 1}^k (\frac{r_jn_j^2}{p_j} -s_jq_jm^2).
\end{align*}

\end{proof}

\subsection{Adjoint Reidemeister torsion}\label{subsec:adjoint_torsion_review}

The  Reidemeister torsion ($R$-torsion)  $\tau(X)$  of  a cellulation $K_X$ of a manifold $X$ uses the action of the fundamental group $\pi_1(X)$ on the universal cover $\widetilde{K_X}$ to measure the complexity of the cellulation of $X$.  It is a topological invariant of $X$ from determinants of matrices obtained from the incidences of the cells of $\widetilde{K_X}$.  The $R$-torsion makes essential use of the bases in the chain complex of the universal cover, while the homology and homotopy groups do not see the geometric information encoded in the based chain complex.   For our purpose, we need the non-Abelian generalization of $R$-torsion twisted by a representation $\rho: \pi_1(X)\rightarrow G$ for some semi-simple Lie group $G$, in particular the adjoint Reidemeister torsion for the adjoint representation of $\SL(2,\bbC)$. We recall some basics here, for more details, please refer to \cite{milnor66} and \cite{turaev01}.

Let 
$$C_*=(0\longrightarrow C_n\stackrel{\partial_n}{\longrightarrow}C_{n-1}\stackrel{\partial_{n-1}}{\longrightarrow}\ \cdots\ \stackrel{\partial_1}{\longrightarrow}C_0\longrightarrow0)$$
be a
chain complex of finite dimensional vector spaces over the field $\mathbb{C}$. Choose a basis $c_i$ of $C_i$ and a basis $h_i$ of the $i$-th homology group $H_i(C_*)$. The torsion of $C_*$ with respect to
these choices of bases is defined as follows. For each $i$, let $b_i$ be a set of vectors in $C_i$ such that $\partial_i(b_i)$ is a basis of $\Ima(\partial_i)$ and let $\tilde{h}_i$ denote a lift of $h_i$ in $\Kernel(\partial_i)$. Then the
set of vectors $\tilde{b}_i := \partial_{i+1}(b_{i+1}) \sqcup \tilde{h}_i \sqcup b_i$ is a basis of $C_i$. Let $D_i$ be the transition matrix from $c_i$ to $\tilde{b}_i$. To be specific, each column of $D_i$ corresponds to a vector in $\tilde{b}_i$ being expressed as a linear combination of vectors in $c_i$.  
Define the torsion
$$\tau(C_*,c_*,h_*):=\left|\prod_{i=0}^n \ \text{det}(D_i)^{(-1)^{i+1}}\right|
$$
\begin{remark}
A few remarks are in order.
\begin{itemize}
\item The torsion, as it is denoted, does not depend on the choice of $b_i$ and the lifting of $h_i$.
    \item Here we define the torsion as the norm of the usual torsion, thus we do not need to deal with sign ambiguities.
\end{itemize}
\end{remark}

Let $X$ be a finite CW-complex and $(V,\rho)$ be a homomorphism $\rho:\pi_1(X)\longrightarrow \SL(V)$. The
vector space $V$ turns into a left $\mathbb{Z}[\pi_1(X)]$-module. The universal cover $\tilde{X}$ has a natural CW structure from $X$, and its chain complex $C_*(\tilde{X})$ is a free \textit{left} $\mathbb{Z}[\pi_1(X)]$-module
via the action of $\pi_1(X)$ as covering transformations. View $C_*(\tilde{X})$ as a \textit{right} $\mathbb{Z}[\pi_1(X)]$-module by $\sigma.g := g^{-1}.\sigma$ for $\sigma \in C_*(\tilde{X})$ and $g \in \pi_1(X)$.
 We define the twisted chain complex $C_*(X;\rho):= C_*(\tilde{X})\otimes_{\mathbb{Z}[\pi_1(X)]}V$.
Let $\{e_{\alpha}^i\}_{\alpha}$ be the set of $i$-cells of $X$ ordered in an arbitrary way.
Choose a lifting $\tilde{e}_{\alpha}^i$ of $e_{\alpha}^i$ in $\tilde{X}$. It follows that $C_i(\tilde{X})$ is generated by $\{\tilde{e}_{\alpha}^i\}_{\alpha}$ as a free $\mathbb{Z}[\pi_1(X)]$-module (left or right). Choose a basis of $\{v_{\gamma}\}_{\gamma}$ of $V$. Then $c_i(\rho):= \{\tilde{e}_{\alpha}^i \otimes v_{\gamma}\}$ is a $\bbC$-basis of $C_i(X;\rho)$. 
\begin{definition}\label{def:adj_cyclic}
Let $\rho:\pi_1(X)\longrightarrow \SL(V)$ be a representation.
\begin{enumerate}
    \item  We call $\rho$ acyclic if $C_*(X;\rho)$ is acyclic. Assume $\rho$ is acyclic. The torsion of $X$ twisted by $\rho$ is defined to be,
    \begin{equation*}
        \tau(X;\rho):= \tau\biggl(C_*(X;\rho),\, c_*(\rho)\biggr).
    \end{equation*}
    \item Let $\Adj: \SL(V) \to \SL(\mathfrak{sl}(V))$ be the adjoint representation of $\SL(V)$ on its Lie algebra $\mathfrak{sl}(V)$. We call $\rho$ \textit{adjoint acyclic} if $\Adj \circ \rho$ is acyclic. Assume $\rho$ is adjoint acyclic.  Define the \textit{adjoint Reidemeister torsion} of $\rho$ to be,
    \begin{equation*}
        \Tor(X;\rho):= \tau(X; \Adj \circ \rho).
    \end{equation*}
\end{enumerate}
\end{definition}
\begin{remark}
In this paper, we will only deal with the adjoint Reidemeister torsion $\rho$. For that matter, we simply call it the torsion of $\rho$. When no confusion arises, we abbreviate  $\Tor(X;\rho)$ as $\Tor(\rho)$.
\end{remark}

The following tool will be useful in computing torsions.
\\
\textbf{Multiplicativity Lemma}
Let $0\longrightarrow C_*^{\prime}\longrightarrow C_*\longrightarrow C_*^{\prime\prime}\longrightarrow0$ be an exact sequence of chain complexes. Assume that $C_*,C_*^{\prime},C_*^{\prime\prime}$ are based by $c_*,c_*^{\prime},c_*^{\prime\prime}$, respectively, and
their homology groups based by $h_*,h_*^{\prime},h_*^{\prime\prime}$, respectively. Associated to the short exact sequence is the long exact sequence $H_*$ in homology
$$\cdots \longrightarrow H_j(C_*^{\prime})\longrightarrow H_j(C_*)\longrightarrow
H_j(C_*^{\prime\prime})\longrightarrow H_{j-1}(C_*^{\prime})\longrightarrow \cdots
$$
with the reference bases. For each $i$, identify $c'_i$ with its image in $C_i$ and arbitrarily choose a preimage $\tilde{c}^{\prime\prime}_i$ of $c^{\prime\prime}_i$ in $C_i$. If the transition matrix between the bases $c_i$ and $c'_i \sqcup \tilde{c}^{\prime\prime}_i$ has determinant $\pm 1$, we call 
$c_*,c_*^{\prime},c_*^{\prime\prime}$ compatible. In this case, we have
$$\tau(C_*,c_*,h_*)\ =\ \tau(C_*^{\prime},c_*^{\prime},h_*^{\prime})\ \tau(C_*^{\prime\prime},c_*^{\prime\prime},h_*^{\prime\prime})\ \tau(H_*,\{h_* \sqcup h_*^{\prime} \sqcup h_*^{\prime\prime}\}).
$$

\subsection{Modular data from three manifolds}

The modular data of an MTC or a pre-modular category consist of the modular $S$- and $T$- matrices. Given a three manifold $X$ with certain conditions,  \cite{gang20} contains an algorithm for choosing the $T$-matrix and  the first row of the $S$-matrix, i.e. all quantum dimensions.  The next step for the full $S$-matrix is a trial-and-error algorithm based on finding the right loop operators for each simple object.  When all the loop operators are given, then the modular data can be computed.  There are no general algorithms to define loop operators, but in the cases of SFSs and SOL manifolds, we find the relevant loop operators completely.

\subsubsection{From adjoint-acyclic non-Abelian characters to simple object types}

Each premodular category has a label set---the isomorphism classes of the simple objects, and a label is an isomorphism class of simple objects, so we will refer to a label also as a simple object type.  In physics, an anyon model is a unitary MTC and a label is called an anyon type or a topological charge.  

A candidate label from a three manifold $X$ and $\SL(2,\bbC)$ is morally an irreducible representation of the fundamental group $\pi_1(X)$ to $\SL(2,\bbC)$.  But the precise definition is more subtle and based on our examples later, we make the following definition.  In particular, we discover that reducible but indecomposable representations cannot be discarded and  play important roles in the construction of premodular categories from torus bundles over the circle.  Our definition is specific for representations to $\SL(2,\bbC)$ and we expect an appropriate generalization is needed for other Lie groups such as $\SL(n,\bbC), n\geq 3$.

\begin{definition}
Let $\chi \in \chi(X)$ be an $\SL(2,\bbC)$-character of a three manifold $X$.
\begin{itemize}
    \item $\chi$ is non-Abelian if at least one representation $\rho: \pi_1(X)\rightarrow \SL(2,\bbC)$  with character $\chi$ is non-Abelian, i.e. $\rho$ has non-Abelian image in $\SL(2,\bbC)$. The set of all non-Abelian characters of $X$ is denoted by $\nabchar(X)$.
    \item A non-Abelian character $\chi$ is {\it adjoint-acyclic} if  each non-Abelian representation $\rho: \pi_1(X)\rightarrow \SL(2,\bbC)$ with character $\chi$ is adjoint-acyclic, namely, the chain complex associated with the universal cover $\tilde{X}$ twisted by $\Adj \circ \rho$ is acyclic (see Definition \ref{def:adj_cyclic}), and furthermore, the adjoint Reidemeister torsion of all such non-Abelian representations with character $\chi$ are the same.
    \item A candidate label is an {\it adjoint-acyclic non-Abelian character.}
    \item A candidate label set $L(X)$ from a three manifolds $X$ is a finite set of adjoint-acyclic non-Abelian characters in $\chi(X)$ with a pre-chosen character such that the difference of the CS invariant of each character $L(X)$ with that of the pre-chosen character is a rational number. 

The pre-chosen character is the candidate tensor unit.
\end{itemize}
\end{definition}

Note that by definition, the  adjoint Reidemeister torsion is well-defined for adjoint-acyclic non-Abelian characters. The $\CS$ invariant only depends on characters, and is hence also well-defined for such characters.

In this paper, our candidate label set is in general maximal in the sense it consists of all the adjoint-acyclic non-Abelian characters of the given three manifold.  It is also true that  the CS invariants of all the candidate labels including the candidate tensor unit are all rational in our examples.  We are not aware of any example of a candidate label set for which not all CS invariants are rational numbers.

\subsubsection{Vacuum choices, loop operators, and modular data}

Each simple object $x$ of a premodular category $\mcB$ has a quantum dimension $d_x$ and a  topological twist $\theta_x$.  The set $Td(B):=\cup_{i\in L(\mcB)}\{d_{x_i}, \theta_{x_i}\}$ will be called the twist-dimension set of $\mcB$, where $L(\mcB)$ is the label set of $\mcB$ and $\{x_i, i\in L(\mcB)\}$ form a complete representative set of simple objects of $\mcB$.  A candidate label set of a three manifold $X$ will lead to a candidate twist-dimension set in the following.  

The choice of a tensor unit or vacuum from a collection of adjoint-acyclic non-Abelian characters is not unique in general and it is known that different choices could produce different premodular categories.  Once a vacuum is chosen, then the adjoint Reidemeister torsion of each character is scaled to the absolute value of normalized quantum dimension and the difference of the CS invariant of the character with that of the vacuum is the conformal weight of the simple object up to a sign\footnote{The sign and hence the negative sign in front of $\CS$ invariant below is not important and the choice is made to be the same as in \cite{gang20}.}.

Given a three manifold $X$ and a Lie group $G$, a central representation of $\pi_1(X)$ is a homomorphism from $\pi_1(X)$ to the center $Z(G)$ of $G$.
For $G=\SL(2,\bbC)$, a central representation of $\pi_1(X)$ is simply a homomorphism from $\pi_1(X)$ to $\bbZ_2$.  The group of central representations can be identified with $H^1(X,\bbZ_2)$.  A central representation $\sigma \in H^1(X,\bbZ_2)$ of $\pi_1(X)$ naturally acts on $\R(X)$ by tensoring $\rho \in \R(X)$, i.e. by sending $\rho$ to $\rho\otimes \sigma$. Moreover, this action induces an action of central representations on the character variety $\chi(X)$.

\begin{definition}

\begin{enumerate}
\item Given a candidate label set $L(X)$ from a three manifold $X$, a central representation $\sigma$ is {\it bosonic} with respect to $L(X)$ if the action of $\sigma$ keeps $L(X)$ invariant and preserves the CS invariant of every candidate label.  If  the action of $\sigma$ changes the CS invariants of all candidate labels in $L(X)$ by either $0$ or $\frac{1}{2}$, then $\chi$ is called {\it fermionic} if it is not bosonic.

\item Two candidate labels are centrally related if they are in the same orbit under the action of $H^1(X,\bbZ_2)$ and they have the same CS and torsion invariant.
\end{enumerate}

\end{definition}

Given a candidate label set $L(X)$ of $X$ that is invariant under the action of $H^1(X,\bbZ_2)$, the candidate symmetric center $s(X)$ consists of all characters in $L(X)$ that are centrally related to the candidate tensor unit. Let $G_0(X)$ be the maximal subgroup of $H^1(X,\bbZ_2)$ such that $G_0(X)$ maps the candidate tensor unit onto $s(X)$.
The action of $G_0(X)$ separates $L(X)$ into orbits $\{O_0,\cdots, O_m\}$, where each subset $O_i$ of $L(X)$ consists of candidate labels that are centrally related to each other, and $O_0$ is the subset for the candidate vacuum.

We often represent a candidate label (a character) by arbitrarily choosing a representative (a representation of $\pi_1(M)$) for it.

\begin{definition}\label{def:admissible_label_set}
A candidate label set $L(X)=\{\rho_\alpha\}$ of a three manifold $X$ with $\rho_0$ the candidate vacuum is {\it admissible} if the following two equations hold with the notations as above:
\begin{equation}
    \sum_{\rho_\alpha \in L(X)} \frac{1}{2\Tor(\rho_\alpha)}=1,
\end{equation}
\begin{equation}\label{equ:S00_general}
      \left|\sum_{\alpha} \frac{\exp(-2\pi i\, \CS(\rho_{\alpha}))}{2 \Tor(\rho_{\alpha})}\right|=
\frac{1}{s_L} \frac{\sqrt{|s(X)|}}{\sqrt{2\Tor(\rho_0)}},
\end{equation}
where $s_L=1$ if all central representations in $G_o(X)$ are bosonic and $s_L={\sqrt{2}}$ if there is a fermionic one. 
\end{definition}

The conditions above are derived from the conjecture that the Mueger center of the potential premodular category is a collection of Abelian anyons parameterized by the subset $O_0$.  In the condensed category, each subset $O_i$ will be identified into a single composite object which has the same quantum dimension as that of any simple object in $O_i$ and which splits into a number of simple objects of the same quantum dimension. The resulting condensed category is either modular or super-modular depending on if there is a fermion in the candidate Mueger center. In a particular case when $X$ is a $\bbZ_2$ homology sphere, that is, $H^1(X,\bbZ_2) = 0$, Equation \ref{equ:S00_general} reduces to,
\begin{equation}\label{equ:S00}
      \left|\sum_{\alpha} \frac{\exp(-2\pi i\, \CS(\rho_{\alpha}))}{2 \Tor(\rho_{\alpha})}\right| =  \frac{1}{\sqrt{2 \Tor(\rho_0)}}.
\end{equation}

Given an admissible candidate label set $L(X)$ with the chosen candidate tensor unit $\rho_0$, then the candidate twist-dimension set is constructed as follows:
\begin{align}
         \theta_\alpha &=e^{-2\pi i (\CS(\rho_\alpha)-\CS(\rho_0))},\label{equ:CS_is_twist}\\
         D^2&=2\Tor(\rho_0)\label{equ:torsion0_is_D}\\
         d_\alpha^2 &=\frac{D^2}{2 \Tor(\rho_\alpha)},\label{equ:torsion_is_qdim}
\end{align}
where $D^2$ is the total dimension squared of the candidate premodular category.

Next, we discuss the construction of the $S$-matrix.
\begin{definition}
Given a three manifold $X$, a primitive loop operator of $X$ is a pair $(a,R)$, where $a$ is a conjugacy class of the fundamental group $\pi_1(X)$ of $X$ and $R$ a finite dimensional irreducible representation of $\SL(2,\bbC)$.
\end{definition}

Given an $\SL(2,\bbC)$-representation $\rho$ of $\pi_1(X)$ and a primitive loop operator $(a,R)$, then the weight of the loop operator $(a,R)$ with respect to $\rho$ is $W_{\rho}(a,R):=\Tr_R(\rho(a))$.  Denote by $\Sym^j$ the unique $(j+1)$-dimensional irreducible representation of $\SL(2,\bbC)$. Then $W_{\rho}(a,\Sym^j)$ can be computed from the Chebyshev polynomial $\Delta_j(t)$ defined recursively by,
\begin{equation}
    \Delta_{j+2}(t) = t\Delta_{j+1}(t) - \Delta_j(t), \quad \Delta_0(t)=1, \Delta_1(t) = t.
\end{equation}
Explicitly, 
\begin{equation}
    W_{\rho}(a,\Sym^j) = \Delta_j(t), \quad t =  W_{\rho}(a,\Sym^1) = \Tr(\rho(a)).
\end{equation}
From the above two equations, it follows that $W_{\rho}(a,\Sym^j)$ only depends on the character $\chi$ induced by $\rho$. It is direct to check that,
\begin{equation}\label{equ:Delta}
    \Delta_j(2\cos \theta) = \sin((j+1)\theta)/\sin \theta, \quad \Delta_j(-t) = (-1)^j \Delta_n(t).
\end{equation}

A fundamental assumption in constructing the $S$-matrix is that each candidate label $\rho_{\alpha}$ should correspond to a finite collection of primitive loop operators:
\begin{equation}\label{equ:local_operators}
    \rho_{\alpha} \mapsto \{(a^{\kappa}_{\alpha}, R^{\kappa}_{\alpha})\}_{\kappa}.
\end{equation}
By choosing a sign $\epsilon = \pm 1$, we define the $W$-symbols
\begin{equation}\label{equ:Wbetaalpha}
    W_{\beta}(\alpha)\  :=\  \prod_{\kappa} W_{\epsilon\, \rho_{\beta}}(a_{\alpha}^{\kappa},R_{\alpha}^{\kappa})\  = \ \prod_{\kappa} \Tr_{R^{\kappa}_{\alpha}}(\epsilon\, \rho_{\beta}(a^{\kappa}_{\alpha})), \quad \rho_{\alpha}, \rho_{\beta} \in L(X).
\end{equation}
The $W$-symbols and the un-normalized $S$-matrix $\tilde{S} = D\, S$ are related by,
\begin{equation}\label{equ:SW_relation}
    W_{\beta}(\alpha) \ = \  \frac{\tilde{S}_{\alpha\beta}}{\tilde{S}_{0\beta}} \quad \text{or} \quad \tilde{S}_{\alpha\beta}\  = \  W_{\beta}(\alpha)W_{0}(\beta),
\end{equation}
where $0$ denotes the tensor unit $\rho_0$. In particular, the quantum dimension 
\begin{equation}\label{equ:d_is_W0}
    d_{\alpha} = W_{0}(\alpha).
\end{equation}

\begin{remark}
Currently it involves a guess-and-trial process to find the correspondence between candidate labels and loop operators. One on hand, we know the absolute value of $W_0(\alpha)$ from Equations \ref{equ:torsion_is_qdim} and \ref{equ:d_is_W0}. On the other hand, $W_0(\alpha)$ can also be computed by Equation \ref{equ:Wbetaalpha}.  Note that, to obtain $W_0(\alpha)$, only the loop operators corresponding to $\rho_{\alpha}$ are required. Hence, we can choose a set of loop operators for  $\rho_{\alpha}$ so that the two ways of computing $W_0(\alpha)$ match (in absolute value). See the next remark for further validation of choices of loop operators. We leave it as a future direction to define the rigorous correspondence.
\end{remark}

\begin{remark}
 We expect that the resulting modular data corresponds to a MTC if and only if $H^1(X, \bbZ_2) = 0$. Note that, this is  a  purely topological condition, independent of the choice of loop operators. Hence, if $H^1(X, \bbZ_2) = 0$, we can also validate a choice of the loop operators by checking whether the resulting $S$ and $T$ matrices define a representation to $\SL(2,\bbZ)$. 
\end{remark}

 Before closing this section, we summarize the construction of modular data and show various choices during the procedure.  Given a closed 3-manifold $X$, first we choose a candidate label set $L(X)$ which is a finite set of adjoint-acyclic non-Abelian characters. A special character in $L(X)$ is prechosen as the tensor unit. The candidate label set is required to be admissible (see Definition \ref{def:admissible_label_set}). Then the topological twists ($T$-matrix) and quantum dimension squared of simple objects are given in Equations \ref{equ:CS_is_twist}-\ref{equ:torsion_is_qdim}. Next, we associate to each character in $L(X)$ a finite collection of primitive loop operators (see Equation \ref{equ:local_operators}) from which and a choice of $\epsilon = \pm 1$ the $W$-symbols are defined. The $S$-matrix and the $W$-symbols are related to each other by Equation \ref{equ:SW_relation}.

\section{Modular tensor categories from Seifert fibered spaces}
\label{sec:MTC_SFS}
In this section, we consider SFSs with three singular fibers and construct modular data associated with premodular categories.
Throughout the section, set $M = \{0;(o,0); (p_1, q_1), (p_2, q_2),  (p_3, q_3)\}$, where each pair $(p_k, q_k)$ are co-prime. So the underlying 2-manifold of the orbit surface $\Sigma$ has genus $0$ and both $M$ and $\Sigma$ are orientable.

\subsection{Character varieties of Seifert fibered spaces}
\label{subsec:variety}
For $M = \{0;(o,0); (p_1, q_1), (p_2, q_2),  (p_3, q_3)\}$, its fundamental group has the following presentation,
\begin{align*}
    \pi_1(M) \ =\ \langle\, x_1,x_2,x_3,h\, |\, x_k^{p_k}h^{q_k}=1,\, x_kh=hx_k,\, x_1x_2x_3=1, k = 1,2,3\,\rangle
\end{align*}
We look for all non-Abelian characters of $\pi_1(M)$ to $G = \SL(2, \bbC)$.

Let $\rho: \pi_1(M) \rightarrow G$ be a non-Abelian representation. Since $h$ is in the center of $\pi_1(M)$ and $\rho$ is non-Abelian, $\rho(h)$ must be $\pm I$. It follows that each $\rho(x_k)$ has finite order, and is diagonalizable in particular. Moreover, any $\rho(x_k)$ does not commute with another $\rho(x_j)$. This implies neither $\rho(x_k)$ can be $\pm I$. Up to conjugation, we assume $\rho(x_k)$ take the following form (writing $\rho(x_k)$ simply as $x_k$),
\begin{equation}\label{equ:xk matrix}
    x_1=
    \setlength\arraycolsep{2pt}
    \begin{pmatrix}
       e^{i\alpha_1}&0\\
       0&e^{-i\alpha_1}
    \end{pmatrix},
    x_2=
    \begin{pmatrix}
    a&b\\
    c&d
    \end{pmatrix}
    \sim
    \begin{pmatrix}
    e^{i\alpha_2}&0\\
    0&e^{-i\alpha_2}
    \end{pmatrix},
    x_3 \sim
    \begin{pmatrix}
     e^{i\alpha_3}&0\\
     0&e^{-i\alpha_3}
    \end{pmatrix}
\end{equation}
where $0 < \alpha_k < \pi$, $ad - bc =1$, and $b$ and $c$ are not simultaneously zero. We have the following linear equations for $a$ and $d$.
\begin{alignat}{3}
&\Tr(x_2) &&=e^{i\alpha_2}+e^{-i\alpha_2}&&=a+d \label{equ:ad equation1}\\
&\Tr(x_3) &&=e^{i\alpha_3}+e^{-i\alpha_3}&&=ae^{i\alpha_1}+de^{-i\alpha_1} \label{equ:ad equation2}
\end{alignat}
Hence, given the $\alpha_k'$s, or equivalently $\Tr(x_k)$, $a$ and $d$ are uniquely determined, and $a = \bar{d}$. Moreover, when $|a| \neq 1$ implying $bc\neq 0$, this also determines $\rho$ up to conjugacy. When $|a|=1$ implying $bc = 0$, there are precisely two conjugacy classes with
\begin{equation}
    x_2=
    \begin{pmatrix}
    a&1\\
    0&\bar{a}
    \end{pmatrix}
\text{ or }
x_2=
\begin{pmatrix}
a&0\\
1&\bar{a}
\end{pmatrix}
\label{reducible}
\end{equation}
It can be checked that these two representations are complex conjugate to each other up to conjugacy, and that their characters take real values. They give rise to the same character. There are two types of non-Abelian representations. One type is irreducible satisfying $b,c\neq 0$. Characters of representations of this type one-to-one correspond to conjugacy classes of representations \cite{culler83}. The other type is reducible with exactly one of $b,c$ zero. Each character of this type corresponds to two conjugacy classes.

To summarize, the triple $(\alpha_1,\alpha_2,\alpha_3)$ and $\Tr(h)$ uniquely determine the character. Next, we find all possible such triples.

If $h = I$, each $e^{i \alpha_k}$ is a $p_k$-th root of $1$. If $h = -I$, then $e^{i \alpha_k}$ is a $p_k$-th root of $1$ if $q_k$ is even, and a $p_k$-th root of $-1$ if $q_k$ is odd. We claim all triples satisfying the above conditions can be realized by some representations. Indeed, given such a triple $(\alpha_1,\alpha_2,\alpha_3)$, we define $\rho(x_1)$ and $\rho(x_2)$ as in Equation \ref{equ:xk matrix} and let $\rho(x_3):= (\rho(x_1)\rho(x_2))^{-1}$. Equations \ref{equ:ad equation1}, \ref{equ:ad equation2} determine $a$ and $d$, and we arbitrarily choose $b$ and $c$ such that $ad-bc = 1$. Again, Equations \ref{equ:ad equation1}, \ref{equ:ad equation2} guarantee that $\rho(x_k)$ so defined has eigenvalues $e^{\pm i \alpha_k}$, and therefore they satisfy all the relations in the presentation of $\pi_1(M)$.

Set $\alpha_k = \frac{2 \pi n_k}{p_k}$ and $\rho(h) = e^{2 \pi i \lambda}I$, $\lambda = 0, \frac{1}{2}$. If $\lambda = 0$ or if $\lambda=\frac{1}{2}$ and $q_k$ is even, then $n_k$ is an integer strictly between 0 and $\frac{p_k}{2}$. If $\lambda=\frac{1}{2}$ and $q_k$ is odd, then $n_k$ is a proper half integer strictly between 0 and  $\frac{p_k}{2}$. The quadruple $(n_1, n_2, n_3, \lambda)$ completely characterizes a character.

For an integer $p > 0$, denote by $\IntSet{p}$ the set of integers $\{0,1, \cdots, p\}$, and by $\IntSet{p}^e$ (resp. $\IntSet{p}^o$) the subset of even (resp. odd) integers in $\IntSet{p}$. The non-Abelian character variety of $M$ is given as follows,
\begin{equation}\label{equ:R0M_original}
\begin{split}
    \nabchar(M) = &\left\{\left(\frac{j_1+1}{2}, \frac{j_2+1}{2}, \frac{j_3+1}{2}, \frac{1}{2}\right)\ | \ j_k \in \IntSet{p_k-2}^{\epsilon_k}\right\}  \\
            \sqcup &\left\{\left(\frac{j_1+1}{2}, \frac{j_2+1}{2}, \frac{j_3+1}{2}, 0\right)\ | \ j_k \in \IntSet{p_k-2}^{o}\right\},
\end{split}
\end{equation}
where $\epsilon_k = ` e$' if $q_k$ is odd, and $\epsilon_k = \lq o $' otherwise. For $(n_1,n_2,n_3,\lambda) \in \nabchar(M)$, a corresponding representation $\rho$ has $e^{\pm \frac{2 \pi i n_k}{p_k}}$ as the eigenvalue of $\rho(x_k)$ and $\rho(h) = e^{2\pi i \lambda} I$.

The size of $\nabchar(M)$ is 
$$|\nabchar(M)| = \floor{\frac{p_1}{2}}\floor{\frac{p_2}{2}}\floor{\frac{p_3}{2}} + \floor{\frac{p_1-1}{2}}\floor{\frac{p_2-1}{2}}\floor{\frac{p_3-1}{2}},$$
where $\floor{x}$ is the greatest integer less than or equal to $x$. 

For instance, if all the $q_k'$s are odd, then $\nabchar(M)$ can also be written as,
\begin{align*}
      \nabchar = &\biggl\{\bigl(\frac{j_1+1}{2}, \frac{j_2+1}{2}, \frac{j_3+1}{2}, \frac{(j_1+1) \mod 2}{2}\bigr) \nonumber\\ 
         &\ | \ j_k \in \IntSet{p_k-2},\  j_1 = j_2 = j_3 \mod 2\biggr\}
\end{align*}

\subsection{Torsion of Seifert fibered spaces}
\label{subsec:SFS_torsion}

Freed computed torsions of Brieskorn homology spheres for the adjoint representations of irreducible $\SU(2)$ representations in \cite{freed92}. Kitano computed torsions of SFSs for irreducible $\SL(2,\mathbb{C})$ representations in \cite{kitano94}. However, we need to compute torsions of SFSs for the adjoint representations of nonAbelian $SL(2,\mathbb{C})$ representations containing both irreducible and reducible ones. This may be known to experts, but we did not find a reference for explicitly doing so.  To make the paper self-contained, we provide a detailed derivation of these torsions, generalizing the work of \cite{freed92} and \cite{kitano94}.

Let $X$ be the SFS  $\{0;(o,0); (p_1,q_1),(p_2,q_2),(p_3,q_3)\}$. Decompose $X$ as $\cup_{i=0}^3\,A_i\ \cup B$ along $\cup_{i=0}^3\,T_i$ where $B=(S^2-4pts)\times S^1$, and $A_0,\ A_i(i=1,2,3)$ are solid tori attached to $B$ by index $1,\ \frac{p_i}{q_i}$ along $T_0,\ T_i$, respectively. Let $\rho:\pi_1(X)\longrightarrow \SL(2,\mathbb{C})$ be a non-Abelian representation, $V=\mathfrak{sl}(2,\mathbb{C})$ be the adjoint representation of $\rho$ with the basis
$$e_1=\begin{pmatrix}0&1\\0&0\end{pmatrix},e_2=\begin{pmatrix}1&0\\0&-1\end{pmatrix},e_3=\begin{pmatrix}0&0\\1&0\end{pmatrix}
$$
From Section \ref{subsec:variety}, $\rho$ is parametrized by $(n_1,n_2,n_3,h)$ where $0<n_i<\frac{p_i}{2}$, $n_i\in\frac{1}{2}\mathbb{Z}$, $h=0,\frac{1}{2}$. Assume that $r_i,s_i\in\mathbb{Z}$, such that $p_is_i-r_iq_i=1$.
\begin{proposition}\label{prop:SFS_torsion}
When $\rho$ is nonAbelian, $C_*(\tilde{X})\otimes_{\mathbb{Z}[\pi_1(X)]}V$ is acyclic and
$$\Tor(X;\rho)=\frac{p_1p_2p_3}{\prod_{i=1}^34\sin^2\frac{2\pi r_in_i}{p_i}}
$$
\end{proposition}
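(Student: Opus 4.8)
The strategy is to compute the adjoint Reidemeister torsion by the Mayer--Vietoris / Multiplicativity Lemma approach, decomposing $X = A_0 \cup A_1 \cup A_2 \cup A_3 \cup B$ along the four tori $T_0, T_1, T_2, T_3$, exactly as set up in the paragraph preceding the statement. The torsion of each solid torus and of the piece $B = (S^2 - 4\,\mathrm{pts}) \times S^1$ twisted by $\Adj \circ \rho$ can be computed directly, and then the Multiplicativity Lemma (together with the long exact Mayer--Vietoris sequences in $\Adj \circ \rho$-twisted homology) assembles these into $\Tor(X;\rho)$, with correction factors coming from the torsion of the relevant long exact sequences.

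In more detail, first I would record the twisted homology of the building blocks. For a solid torus $A$ with core curve $\gamma$, the complex $C_*(A;\Adj\circ\rho)$ is chain-homotopy equivalent to that of $S^1$, so it is \emph{not} acyclic: $H_0$ and $H_1$ are each $1$-dimensional, spanned by the $\rho(\gamma)$-invariant vector in $\mathfrak{sl}(2,\bbC)$ (here $\rho(\gamma) = \rho(x_i)$ is diagonal with eigenvalues $e^{\pm 2\pi i n_i/p_i} \neq \pm 1$, so the invariant subspace is exactly $\bbC e_2$). The torsion of $A$ relative to this homology basis is a unit (it equals $1$ up to the normalization conventions), but the key numerical input is how the meridian-longitude framing interacts with the eigenvalue: gluing $A_i$ to $B$ by slope $p_i/q_i$ means the curve $x_i^{p_i}h^{q_i}$ bounds, and tracking the invariant vectors through the Mayer--Vietoris connecting maps produces the factors $4\sin^2(2\pi r_i n_i/p_i) = |e^{2\pi i r_i n_i/p_i} - e^{-2\pi i r_i n_i/p_i}|^2$. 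For $B$, one uses that $B \simeq (\text{wedge of 4 circles}) \times S^1$; the $S^1$ factor contributes the central generator $h$ with $\rho(h) = \pm I$ acting trivially on $\mathfrak{sl}(2,\bbC)$, so the Künneth formula gives $C_*(B;\Adj\circ\rho) \simeq C_*(\vee_4 S^1; \Adj\circ\rho) \otimes C_*(S^1;\text{triv})$, and one computes its torsion and homology explicitly. The factor $p_1 p_2 p_3$ in the numerator will arise from the determinants of the change-of-basis matrices relating meridians of the $A_i$ to the generators $x_i$, i.e. from the $\GL(2,\bbZ)$-to-$\mathrm{GL}(2,\bbQ)$ transition matrices $\left(\begin{smallmatrix} p_i & r_i \\ q_i & s_i\end{smallmatrix}\right)$ and the torsion $H_1(T_i)$ contributes, combined with the torsion orders of $H_1(X)$.

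Then I would run the Multiplicativity Lemma iteratively: glue $A_0$ to $B$ along $T_0$, then attach $A_1, A_2, A_3$ one at a time. At each stage I must (i) check compatibility of the cell bases, (ii) choose homology bases for all complexes involved, and (iii) compute the torsion $\tau(\mathcal{H}_*)$ of the resulting Mayer--Vietoris long exact sequence. The acyclicity of $C_*(X;\Adj\circ\rho)$ has to drop out of this process: the $H_1$-classes of the solid tori (the invariant vectors $e_2$) and the $H_1, H_2$ classes of $B$ must all cancel in the long exact sequences, which is where the non-Abelianness of $\rho$ (guaranteeing $\rho(x_i) \neq \pm I$, hence the relevant restriction maps on invariant subspaces are isomorphisms or zero in the right pattern) is used essentially. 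Collecting all the determinant contributions gives $\Tor(X;\rho) = \dfrac{p_1 p_2 p_3}{\prod_{i=1}^3 4\sin^2\frac{2\pi r_i n_i}{p_i}}$.

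The main obstacle I anticipate is the bookkeeping in step (iii): correctly identifying the homology bases and computing the torsion of the several long exact Mayer--Vietoris sequences, keeping track of which invariant vectors map to which under the connecting homomorphisms, and making sure all intermediate non-acyclic pieces assemble consistently so that the final complex is acyclic with the claimed torsion. A secondary subtlety is verifying that the formula is independent of the choice of $(r_i, s_i)$ with $p_i s_i - r_i q_i = 1$ — changing $(r_i,s_i)$ shifts $r_i n_i$ by $q_i n_i$, and since $2 n_i \in \bbZ$ one must check $\sin^2(2\pi r_i n_i / p_i)$ is genuinely well-defined (this uses $n_i \in \frac12\bbZ$ and the parity constraints from Section \ref{subsec:variety}, so that $e^{2\pi i q_i n_i}$ is a sign absorbed by the square), and likewise that it matches under the reducible-versus-irreducible dichotomy since both give the same character.
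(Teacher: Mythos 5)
Your plan follows essentially the same route as the paper: the same decomposition $X=\cup_{i=0}^3 A_i\cup B$ along four tori, explicit cellular computation of the twisted torsion of each piece, and assembly via the Multiplicativity Lemma applied to the Mayer--Vietoris long exact sequence, with the $p_i$ factors coming from the $H_1$ connecting map and acyclicity of $C_*(X;\Adj\circ\rho)$ falling out of the splitting of that sequence. The one misattribution is minor: in the paper's computation with the natural cell/homology bases, the factors $4\sin^2(2\pi r_i n_i/p_i)$ arise as the torsions of the solid tori $A_i$ themselves (the boundary map $x_i^{r_i}-I$ acting on the non-invariant eigenvectors $w_i^{\pm}$ is not a unit), not from tracking invariant vectors through the connecting maps; since the Multiplicativity Lemma's total output is independent of where such factors are booked, this does not affect the validity of your plan.
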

\begin{proof}
Denote $C_*\otimes_{\mathbb{Z}[\pi_1(X)]}V$ by $C_{*,\rho}$, twisted homology by $H_*$, and the matrix of element in $\pi_1$ under $\rho$ by the same letter.

Given CW structure on $X$, we have the following exact chain sequence
$$0\longrightarrow\bigoplus_{i=0}^3C_{*,\rho}(T_i)\longrightarrow\bigoplus_{i=0}^3C_{*,\rho}(A_i)\oplus C_{*,\rho}(B)\longrightarrow C_{*,\rho}(X)\longrightarrow0
$$
and long exact sequence
\begin{multline*}
    0\longrightarrow\bigoplus_{i=0}^3 H_3(T_i)\longrightarrow\bigoplus_{i=0}^3 H_3(A_i)\oplus H_3(B)\longrightarrow H_3(X)\longrightarrow \cdots\\ 
    \longrightarrow \bigoplus_{i=0}^3H_0(T_i)\longrightarrow\bigoplus_{i=0}^3H_0(A_i)\oplus H_0(B)\longrightarrow H_0(X)\longrightarrow 0
\end{multline*}

Since the Reidemeister torsion is invariant under simple homotopy, we can just consider the simple homotopy types of the above spaces. $A_i$ is simple homotopy equivalent to a 1-complex, and $T_i$ and $B$ are each simple homotopy equivalent to a 2-complex. Thus we have 
$$H_3(A_i)=H_3(B)=H_3(T_i)=0,\ H_2(A_i)=0.
$$
Construct their cell structure as follows.
$$C_0(B)=<v_B>,C_0(T_i)=<v_{T_i}>,C_0(A_i)=<v_{A_i}>
$$
$$C_1(B)=<x_1,x_2,x_3,h>,C_1(T_i)=<m_i,l_i>,C_1(A_i)=<b_i>
$$
$$C_2(B)=<u_{1,B},u_{2,B},u_{3,B}>,C_2(T_i)=<u_{T_i}>
$$

where $v_*$ are base points of connected spaces, $x_i$ generate $\pi_1(S^2-4pts)$, $h=*\times S^1\in\pi_1(S^2-4pts\times S^1)$, $m_i,l_i$ are meridians and longitudes of $T_i$ respectively, $b_i$ are longitudes of boundary of $A_i$, $u_{i,B}$ are squares with boundary $x_ihx_i^{-1}h^{-1}$, $u_{T_i}$ are squares with boundary $m_il_im_i^{-1}l_i^{-1}$. $T_i(i=1,2,3)$ are attached to $x_i\times h$ by identity map and boundary of $A_i$ by $\begin{pmatrix}s_i&-q_i\\-r_i&p_i\end{pmatrix}$. $T_0$ is attached to $x_1x_2x_3\times h$ and boundary of $A_0$ by identity map. $x_1,x_2,x_3,h$ generate $\pi_1(X)$ as follows.
$$\pi_1(X)=<x_1,x_2,x_3,h|x^{p_i}h^{q_i}=1,x_ih=hx_i,x_1x_2x_3=1>
$$
For matrix under $\rho$, we have
$$x_i\sim\begin{pmatrix}\zeta_i&0&0\\0&1&0\\0&0&\zeta_i^{-1}\end{pmatrix},h=I
$$
where $\zeta_i$ is a $p_i$-th root of unity. $m_i=x_i$, $b_i=x_i^{r_i}$, $l_i=h$. Here we use 1-cell with ends points attached as element in $\pi_1$.

The work of \cite{freed92} can be generalized to irreducible representations of $SL(2,\mathbb{C})$. Thus we focus on reducible and nonAbelian representations. According to \ref{reducible}, taking upper triangular ones for example, they have the following form.
$$x_1=\begin{pmatrix}a_1&0\\0&a_1^{-1}\end{pmatrix},x_2=\begin{pmatrix}a_2&1\\0&a_2^{-1}\end{pmatrix},x_3=\begin{pmatrix}a_1^{-1}a_2^{-1}&-a_1\\0&a_1a_2\end{pmatrix}
$$
where $a_1,a_2,a_3=a_1^{-1}a_2^{-1}$ are roots of 1 or $-1$.
\\
For adjoint representation, we have
\begin{align}\label{adr}
x_1&=\begin{pmatrix}a_1^{-2}&0&0\\0&1&0\\0&0&a_1^2\end{pmatrix},x_2=\begin{pmatrix}a_2^{-2}&2a_2^{-1}&-1\\0&1&-a_2\\0&0&a_2^2\end{pmatrix}\notag
\\
x_3&=\begin{pmatrix}a_1^2a_2^2&-2a_2&-a_1^{-2}\\0&1&a_1^{-2}a_2^{-1}\\0&0&a_1^{-2}a_2^{-2}\end{pmatrix}
\end{align}

Let $w_i^{\pm}$ be the eigenvectors of $x_i$ for eigenvalue $\zeta_i=a_i^{-2}=e^{\frac{4\pi in_i}{p_i}},\zeta_i^{-1}=e^{-\frac{4\pi in_i}{p_i}}$ respectively and $w_i^0$ be the eigenvector of $x_i$ for eigenvalue 1. Then $w_i^{\pm}$ are the eigenvectors of $x_i^r$ for $\zeta_i^{r_i}$ and $w_i^0$ be the eigenvector of $x_i^{r_i}$ for 1. By scaling, assume that $|[w_i^{\pm}w_i^0]|=1$ in $V$. According to \ref{adr} , $w_1^{\pm},w_2^-$ is a basis of $V$. Similarly, for lower triangular ones in \ref{reducible}, $w_1^{\pm},w_2^+$ is a basis of $V$.

For $T_i(i=1,2,3)$, we have
$$0\longrightarrow C_{2,\rho}(T_i)\stackrel{\partial_2}{\longrightarrow}C_{1,\rho}(T_i)\stackrel{\partial_1}{\longrightarrow}C_{0,\rho}(T_i)\longrightarrow0
$$
where
$$\partial_2=\begin{pmatrix}O\\x_i-I\end{pmatrix},\partial_1=\begin{pmatrix}x_i-I&O\end{pmatrix}
$$
We have
\begin{align*}
H_2(T_i)&=<\tilde{u}_{T_i}\otimes w_i^0>
\\
H_1(T_i)&=<\tilde{m}_i\otimes w_i^0,\tilde{l}_i\otimes w_i^0>
\\
H_0(T_i)&=<\tilde{v}_{T_i}\otimes w_i^0>
\end{align*}

Choose preference basis $h_*$ for $H_*(T_i)$ as above and similarly with others. Without confusion, we omit $h_*$ in the expression as $c_*$.
\begin{align}
\tau(C_{*,\rho}(T_i))&=|\frac{[\tilde{l}_i\otimes(x_i-I)w_i^{\pm},\tilde{m}_i\otimes w_i^0,\tilde{l}_i\otimes w_i^0,\tilde{m}_i\otimes w_i^{\pm}]}{[\tilde{u}_{T_i}\otimes w_i^0,\tilde{u}_{T_i}\otimes w_i^{\pm}][\tilde{v}_{T_i}\otimes w_i^0,\tilde{v}_{T_i}\otimes(x_i-I)w_i^{\pm}]}|
\notag\\
&=|\frac{[\tilde{l}_i\otimes(\zeta_i^{\pm1}-1)w_i^{\pm},\tilde{m}_i\otimes w_i^0,\tilde{l}_i\otimes w_i^0,\tilde{m}_i\otimes w_i^{\pm}]}{[\tilde{u}_{T_i}\otimes w_i^0,\tilde{u}_{T_i}\otimes w_i^{\pm}][\tilde{v}_{T_i}\otimes w_i^0,\tilde{v}_{T_i}\otimes(\zeta_i^{\pm1}-1)w_i^{\pm}]}|
\notag\\\label{torTi}
&=1
\end{align}
For $T_0$, we have
$\partial_2=0,\partial_1=0$.
\begin{align*}
H_2(T_0)&=<\tilde{u}_{T_0}\otimes e_i>(i=1,2,3)
\\
H_1(T_0)&=<\tilde{m}_0\otimes e_i,\tilde{l}_0\otimes e_i>
\\
H_0(T_0)&=<\tilde{v}_{T_0}\otimes e_i>
\end{align*}
\begin{align}\label{torT0}
\tau(C_{*\rho}(T_0))=1
\end{align}
For $A_i(i=1,2,3)$, we have
$$0\longrightarrow C_{1,\rho}(A_i)\longrightarrow C_{0,\rho}(A_i)\longrightarrow0
$$
where $\partial_1=x_i^{r_i}-I$.
\\
We have
\begin{align*}
H_1(A_i)&=<\tilde{b}_i\otimes w_i^0>
\\
H_0(A_i)&=<\tilde{v}_{A_i}\otimes w_i^0>
\end{align*}
\begin{align}
\tau(C_{*,\rho}(A_i))&=|\frac{[\tilde{b}_i\otimes w_i^0,\tilde{b}_i\otimes w_i^{\pm}]}{[\tilde{v}_{A_i}\otimes(x_i^{r_i}-I) w_i^{\pm},\tilde{v}_{A_i}\otimes w_i^0]}|
\notag\\
&=|\frac{[\tilde{b}_i\otimes w_i^0,\tilde{b}_i\otimes w_i^{\pm}]}{[\tilde{v}_{A_i}\otimes(\zeta_i^{\pm r_i}-1) w_i^{\pm},\tilde{v}_{A_i}\otimes w_i^0]}|
\notag\\\label{torAi}
&=\frac{1}{|\zeta_i^{r_i}-1||\zeta_i^{-r_i}-1|}
\end{align}
For $A_0$, we have $\partial_1=0$.
\begin{align*}
H_1(A_0)&=<\tilde{b}_0\otimes e_i>(i=1,2,3)
\\
H_0(A_0)&=<\tilde{v}_{A_0}\otimes e_i>
\end{align*}
\begin{align}\label{torA0}
\tau(C_{*\rho}(A_0))=1
\end{align}
For $B$, we have
$$0\longrightarrow C_{2,\rho}(B)\stackrel{\partial_2}{\longrightarrow} C_{1,\rho}(B)\stackrel{\partial_1}{\longrightarrow} C_{0,\rho}(B)\longrightarrow0
$$
where
$$\partial_2=\begin{pmatrix}O&O&O\\O&O&O\\O&O&O\\x_1-I&x_2-I&x_3-I\end{pmatrix},\partial_1=\begin{pmatrix}x_1-I&x_2-I&x_3-I&O\end{pmatrix}
$$
We have
\begin{align*}
H_2(B)&=<\tilde{u}_{i,B}\otimes w_i^0,(\tilde{u}_{1,B}+\tilde{u}_{2,B}x_1+\tilde{u}_{3,B}x_2x_1)\otimes e_i>(i=1,2,3)
\\
H_1(B)&=<\tilde{x}_i\otimes w_i^0,(\tilde{x}_1+\tilde{x}_2x_1+\tilde{x}_3x_2x_1)\otimes e_i>
\end{align*}
Since the rank of matrix $\partial_1$ is 3, we have $H_0(B)=0$. Also, $H_3(B) = 0$ since $B$ is simple homotopy equivalent to a 2-complex.

\begin{align}
&\tau(C_{*,\rho}(B))\notag
\\
&=|[\tilde{u}_{i,B}\otimes w_i^0,\tilde{u}\otimes e_i,\tilde{u}_{1,B}\otimes w_1^{\pm},\tilde{u}_{2,B}\otimes w_2^-]^{-1}\notag
\\
&\ \ \ [\tilde{v}_B\otimes(x_1-I)w_1^{\pm},,\tilde{v}_B\otimes(x_2-I)w_2^-]^{-1}\notag
\\
&\ \ \ [\tilde{x}_i\otimes w_i^0,\tilde{x}\otimes e_i,\tilde{h}\otimes(x_1-I)w_1^{\pm},\tilde{h}\otimes(x_2-I)w_2^-,\tilde{x}_1\otimes w_1^{\pm},\tilde{x}_2\otimes w_2^-]|
\notag\\
&=|[\tilde{u}_{i,B}\otimes w_i^0,\tilde{u}\otimes e_i,\tilde{u}_{1,B}\otimes w_1^{\pm},,\tilde{u}_{2,B}\otimes w_2^-]^{-1}\notag
\\
&\ \ \ [\tilde{v}_B\otimes(\zeta_1^{\pm1}-1)w_1^{\pm},\tilde{v}_B\otimes(\zeta_2^{-1}-1)w_2^-]^{-1}\notag
\\
&\ \ \ [\tilde{x}_i\otimes w_i^0,\tilde{x}\otimes e_i,\tilde{h}\otimes(\zeta_1^{\pm1}-1)w_1^{\pm},\tilde{h}\otimes(\zeta_2^{-1}-1)w_2^-,\tilde{x}_1\otimes w_1^{\pm},\tilde{x}_2\otimes w_2^-]|
\notag\\
&=|[\tilde{u}_{i,B}\otimes w_i^0,\tilde{u}\otimes e_i,\tilde{u}_{1,B}\otimes w_1^{\pm},,\tilde{u}_{2,B}\otimes w_2^-]^{-1}[\tilde{v}_B\otimes w_1^{\pm},\tilde{v}_B\otimes w_2^-]^{-1}\notag
\\
&\ \ \ [\tilde{x}_i\otimes w_i^0,\tilde{x}\otimes e_i,\tilde{h}\otimes w_1^{\pm},\tilde{h}\otimes w_2^-,\tilde{x}_1\otimes w_1^{\pm},\tilde{x}_2\otimes w_2^-]|
\notag\\
&=1\label{torB}
\end{align}
where $\tilde{x}=\tilde{x}_1+\tilde{x}_2x_1+\tilde{x}_3x_2x_1$, $\tilde{u}=\tilde{u}_{1,B}+\tilde{u}_{2,B}x_1+\tilde{u}_{3,B}x_2x_1$.

In the long exact sequence for twisted homology group, we have isomorphisms
$$0\longrightarrow\bigoplus_{i=1}^3 H_*(T_i)\longrightarrow\bigoplus_{i=1}^3 H_*(A_i)\oplus H_*(B)\longrightarrow0
$$
Then $C_{*,\rho}(X)$ is acyclic as follows.

We have
$$0\longrightarrow\bigoplus_{i=0}^3H_0(T_i)\longrightarrow\bigoplus_{i=0}^3H_0(A_i)\longrightarrow0
$$
where $\partial(\tilde{v}_{T_i}\otimes w_i^0)=\tilde{v}_{A_i}\otimes w_i^0$, $\partial(\tilde{v}_{T_0}\otimes e_i)=\tilde{v}_{A_0}\otimes e_i$, $det(\partial)=1$.
$$0\longrightarrow\bigoplus_{i=0}^3H_1(T_i)\longrightarrow\bigoplus_{i=0}^3H_1(A_i)\oplus H_1(B)\longrightarrow0
$$
where $\partial(\tilde{m}_i\otimes w_i^0)=(\tilde{x}_i-\tilde{b}_iQ_i)\otimes w_i^0$, $\partial(\tilde{l}_i\otimes w_i^0)=\tilde{b}_iP_i\otimes w_i^0$, $\partial(\tilde{m}_0\otimes e_i)=(\tilde{x}_1+\tilde{x}_2x_1+\tilde{x}_3x_1x_2)\otimes e_i$, $\partial(\tilde{l}_0\otimes e_i)=\tilde{b}_0\otimes e_i$, $Q_i=\sum_{j=1}^{q_i}x^{-jr_i}$, $P_j=\sum_{j=0}^{p_i-1}x^{jr_i}$, $det(\partial)=p_1p_2p_3$.
$$0\longrightarrow\bigoplus_{i=0}^3H_2(T_i)\longrightarrow H_2(B)\longrightarrow0
$$
where $\partial(\tilde{u}_{T_i}\otimes w_i^0)=\tilde{u}_{i,B}\otimes w_i^0$, $\partial(\tilde{u}_0\otimes e_i)=(\tilde{u}_{1,B}+\tilde{u}_{2,B}x_1+\tilde{u}_{3,B}x_2x_1)\otimes e_i$, $det(\partial)=1$.

According to Multiplicativity lemma, Equations \ref{torTi}, \ref{torT0}, \ref{torAi}, \ref{torA0}, \ref{torB} and the calculations about homology above, we have
\begin{align*}
\Tor(C_{*,\rho}(X))=\frac{p_1p_2p_3}{\prod_{i=1}^34\sin^2\frac{2\pi r_in_i}{p_i}}
\end{align*}

\end{proof}
\subsection{Modular data from Seifert fibered spaces}\label{subsec:MTC_SFS}
We will show that the modular data constructed from 3-component SFSs are related to the Temperley-Lieb-Jones categories at root of unit. So let us collect some basic facts about those. For references, see for instance \cite{wangbook}. 

Let $A$ be a complex number such that $A^4 \neq 1$. For an integer $n$, define the quantum integer $[n]_A = \frac{A^{2n}-A^{-2n}}{A^{2}-A^{-2}}$. So $[0]_A = 0, \ [1]_A = 1, \ [2]_A = A^2+A^{-2}$. For each $A$, usually called the Kauffman variable, such that $A^4$ is a primitive $r$-th root of unity for some integer $r \geq 2$, there is an associated premodular category, called the Temperley-Lieb-Jones category and denoted by $\TLJ(A)$. The category has the label set (simple objects) $\IntSet{p-2}$ where the label $0$ is the unit object.  For $i,j \in \IntSet{p-2}$, the quantum dimension is
$$d_j(A) = (-1)^j [j+1]_A = (-1)^j \frac{A^{2j+2}-A^{-2j-2}}{A^{2}-A^{-2}},$$
the twist is
$$\theta_j(A) = (-A)^{j(j+2)},$$
and the (un-normalized) $S$-matrix is
$$\tilde{S}_{ij}(A) = (-1)^{i+j}[(i+1)(j+1)]_A.$$
The total dimension can be computed directly,
$$D(A) = \frac{\sqrt{2r}}{|A^2-A^{-2}|}.$$
Denote by $\TLJ(A)_0$ (resp. $\TLJ(A)_0$) the subcategory linearly spanned by even (resp. odd) labels. We call $\TLJ(A)_0$ and $\TLJ(A)_1$ the even and odd subcategory of $\TLJ(A)$, respectively.The even and odd subcategory has the same dimension, both equal to $\frac{D(A)}{\sqrt{2}}$.

It is well known that if $A$ is a primitive $4r$-th root of unity, then $\TLJ(A)$ is non-degenerate. If $r$ is odd and $A$ is a primitive $2r$-th root of unity, then $\TLJ(A)$ is degenerate, but the even subcategory $\TLJ(A)_0$ is non-degenerate.

Now we consider the construction of modular data.
As before, set $M = \{0;(o,0); (p_1, q_1), (p_2, q_2),  (p_3, q_3)\}$. Here each pair $(p_k,q_k)$ are co-prime. Choose integers $s_k$ and $r_k$ such that $p_ks_k - q_k r_k = 1$. If $q_k$ is odd, set $c_k = p_kq_ks_k - r_k$. Otherwise, set $c_k = p_kq_ks_k - r_k(p_k-1)^2$. Let $A_k = -\exp (\frac{2\pi i}{4p_k}c_k)$. Note that while $c_k$ depends on the choice of $s_k$ and $r_k$, $A_k$ does not. Moreover, $A_k$ is a primitive $4p_k$-th root of unity if $q_k$ is odd, a primitive $2p_k$-th root of unity if $q_k = 0 \mod 4$, and a primitive $p_k$-th root of unity if $q_k = 2 \mod 4$. In the latter two cases, $p_k$ clearly must be odd. Hence,  in all cases, $A_k^4$ is a primitive $p_k$-th root of unity.

If some $q_k'$s are even, we re-arrange the elements of $\nabchar(M)$ as follows. For $(p,q)$ co-prime, $j \in \IntSet{p-2}$, let 
\begin{equation*}
   n_{p,q}(j) = 
   \begin{cases}
       \frac{p-1-j}{2},              & q \text{ even and }j \text{ even} \\
       \frac{j+1}{2}, & \text{ otherwise} 
   \end{cases}
\end{equation*}
Then from Equation \ref{equ:R0M_original}, $\nabchar(M)$ can also be written as
\begin{equation}
\begin{split}
    &\biggl\{(n_{p_1,q_1}(j_1), n_{p_2,q_2}(j_2), n_{p_3,q_3}(j_3), \frac{1}{2}) \ | \ j_k \in \IntSet{p_k-2}^e,  k = 1,2,3\biggr\}\    \\ 
\sqcup\ &\biggl\{(n_{p_1,q_1}(j_1), n_{p_2,q_2}(j_2), n_{p_3,q_3}(j_3), 0) \ \  | \ j_k \in \IntSet{p_k-2}^o,  k = 1,2,3\biggr\}
\end{split}
\end{equation}

Thus, the elements of $\nabchar(M)$ are indexed by $\Vec{j} \in \prod_{k=1}^3 \IntSet{p_k-2}^e\  \sqcup \ \prod_{k=1}^3 \IntSet{p_k-2}^o$. Given such a $\Vec{j} = (j_1, j_2, j_3)$, denote a corresponding representation by $\rho_{\Vec{j}}$. (The choice of a representative  is irrelevant.) 

 Proposition \ref{prop:SFS_torsion} shows that all non-Abelian characters of $M$ are adjoint acyclic and Proposition \ref{prop:SFS_general_CS} shows that the CS invariants of non-Abelian characters are all rational. We choose the candidate label set $L(M)$ to be $\nabchar(M)$.

We propose the correspondence between $L(M)$ and loop operators by the following map,

\begin{align}\label{equ:irrep_to_local_general}
    \rho_{\Vec{j}} & \mapsto \bigl\{ (x_k^{c_k},\Sym^{j_k})\ |\  k = 1,2,3\bigr\}.
\end{align}
Moreover, we designate $\rho_{\Vec{0}} = \rho_{(0,0,0)}$ as the unit object, which of course corresponds to the loop operator
\begin{align}\label{equ:unit_general}
    \unitobj = \rho_{\Vec{0}} &\mapsto \bigl\{ (x_k^{c_k},\Sym^{0})\ |\  k = 1,2,3\bigr\}.
\end{align}

The following two lemmas are direct consequences of Proposition \ref{prop:SFS_general_CS} and Proposition \ref{prop:SFS_torsion}, respectively.
\begin{lemma}\label{lem:CS_general}
Let $M, \ c_k, \ A_k$ be given as above. For each $\Vec{j} = (j_1, j_2, j_3) \in \prod_{k=1}^3 \IntSet{p_k-2}^e\  \sqcup \ \prod_{k=1}^3 \IntSet{p_k-2}^o$ with $\rho_{\Vec{j}}$ a corresponding representation,  then
 \begin{align}
     \CS(\rho_{\Vec{j}}) = \sum_{k=1}^3 \frac{-c_k}{4p_k} (j_k+1)^2.
 \end{align}
 As a consequence, 
 \begin{align}
     e^{-2\pi i \CS(\rho_{\Vec{j}})} = \prod_{k=1}^3 (-A_k)^{(j_k+1)^2} =(-A_1A_2A_3) \prod_{k=1}^3 \theta_{j_k}(A_k).
 \end{align}
 \begin{proof}
Note that for  $\Vec{j} = (j_1, j_2, j_3)$, $\Tr(\rho_{(j_1,j_2,j_3)}(x_i))=2\cos\frac{2\pi n_{p_i,q_i}(j_i)}{p_i}$. The formula above then follows from Proposition \ref{prop:SFS_general_CS}.
 \end{proof}
\end{lemma}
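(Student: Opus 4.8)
The plan is to reduce everything to Proposition~\ref{prop:SFS_general_CS} and then carry out an elementary, if somewhat case-heavy, comparison of rational numbers modulo $1$. First I would record the two inputs that Proposition~\ref{prop:SFS_general_CS} needs. From the description of $\nabchar(M)$ in Section~\ref{subsec:MTC_SFS}, the representation $\rho_{\Vec{j}}$ indexed by $\Vec{j}=(j_1,j_2,j_3)$ has $\Tr(\rho_{\Vec{j}}(x_k))=2\cos\frac{2\pi n_{p_k,q_k}(j_k)}{p_k}$, while $\rho_{\Vec{j}}(h)=I$ when all $j_k$ are odd and $\rho_{\Vec{j}}(h)=-I$ when all $j_k$ are even (by the structure of $\nabchar(M)$ these are the only two possibilities). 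Substituting $n_k=n_{p_k,q_k}(j_k)$ into the appropriate branch of Proposition~\ref{prop:SFS_general_CS} then expresses $\CS(\rho_{\Vec{j}})$ modulo $1$ as a sum of three terms, one per singular fiber.

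Next I would show that the $k$-th term of this sum agrees with $\tfrac{-c_k}{4p_k}(j_k+1)^2$ modulo $1$, which suffices since the claim is a congruence. Writing $m=j_k+1$ and clearing the denominator, the assertion becomes a divisibility statement modulo $4p_k$. I would split according to which formula for $n_{p_k,q_k}(j_k)$ and which value of $\rho_{\Vec{j}}(h)$ apply: the case $n_k=\tfrac{m}{2}$, $\rho(h)=I$ (when $j_k$ is odd, so $m$ is even); the case $n_k=\tfrac{m}{2}$, $\rho(h)=-I$ (when $j_k$ is even and $q_k$ odd, so $m$ is odd); and the case $n_k=\tfrac{p_k-m}{2}$, $\rho(h)=-I$ (when $j_k$ is even and $q_k$ even, forcing $p_k$ odd, so $m$ is odd). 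In each case one expands the square, uses $p_ks_k-q_kr_k=1$ and the definition of $c_k$ --- so that $r_k+c_k$ is a multiple of $p_k$ whose cofactor has the needed parity --- and invokes the elementary facts that $m^2\equiv 0\pmod 4$ for $m$ even and $m^2\equiv 1\pmod 8$ for $m$ odd, together with the divisibility by $4$ of cross-terms such as $2m(m-1)$. Here both of the two definitions of $c_k$ (the $q_k$-odd one and the $q_k$-even one, which carries the extra $(p_k-1)^2$ factor) enter, and checking that this extra factor is exactly what makes the divisibility hold uniformly is the step that needs genuine care; I expect it to be the main obstacle, the rest being bookkeeping.

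With $\CS(\rho_{\Vec{j}})\equiv\sum_{k=1}^3\tfrac{-c_k}{4p_k}(j_k+1)^2\pmod 1$ in hand, the consequence follows formally. Exponentiating and using $-A_k=\exp(\tfrac{2\pi i}{4p_k}c_k)$ gives $e^{-2\pi i\CS(\rho_{\Vec{j}})}=\prod_{k=1}^3(-A_k)^{(j_k+1)^2}$. Finally, from $\theta_{j_k}(A_k)=(-A_k)^{j_k(j_k+2)}=(-A_k)^{(j_k+1)^2-1}$ we get $(-A_k)^{(j_k+1)^2}=(-A_k)\,\theta_{j_k}(A_k)$, and taking the product over $k=1,2,3$ yields the factor $(-A_1A_2A_3)\prod_{k=1}^3\theta_{j_k}(A_k)$, completing the proof.
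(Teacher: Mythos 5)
Your proposal is correct and follows the same route as the paper: identify $\Tr(\rho_{\Vec{j}}(x_k))=2\cos\frac{2\pi n_{p_k,q_k}(j_k)}{p_k}$ and $\rho_{\Vec{j}}(h)=\pm I$ according to the parity of the $j_k$, then apply Proposition \ref{prop:SFS_general_CS}. The paper's proof stops there, leaving the mod-$1$ comparison implicit, whereas you spell out the case-by-case divisibility check (using $r_k+c_k\in p_k\bbZ$, $m^2\equiv 0\ (4)$ for $m$ even, $m^2\equiv 1\ (8)$ for $m$ odd, and $4\mid 2m(m-1)$), which does go through in all three cases, including the $q_k$-even case where the $(p_k-1)^2$ factor in $c_k$ is exactly what is needed.
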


\begin{lemma}\label{lem:Tor_general}
 Let $M, \ c_k, \ A_k$ be given as above and let $D = D(A_1)D(A_2)D(A_3)/2$. For each $\Vec{j} = (j_1, j_2, j_3) \in \prod_{k=1}^3 \IntSet{p_k-2}^e\  \sqcup \ \prod_{k=1}^3 \IntSet{p_k-2}^o$ with $\rho_{\Vec{j}}$ a corresponding representation, then
 \begin{align}
     \Tor(\rho_{\Vec{j}}) &= \prod_{k = 1}^3 \ \frac{p_k}{4 \sin^2(\frac{\pi r_k (j_k+1)}{p_k})},
 \end{align}
 and hence,
 \begin{equation}
     \bigl(2 \Tor(\rho_{\Vec{j}})\bigr)^{-\frac{1}{2}} = 2 \prod_{k=1}^3 \left|\frac{d_{j_k}(A_k)}{D(A_k)}\right| \ =\  \frac{|\prod_{k=1}^3 d_{j_k}(A_k)|}{D}.
 \end{equation}
\end{lemma}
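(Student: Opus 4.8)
The plan is to derive both identities directly from Proposition~\ref{prop:SFS_torsion} and the explicit Temperley--Lieb--Jones data recalled above; no new geometric input is needed, only careful bookkeeping with roots of unity.

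\emph{The torsion formula.} By Proposition~\ref{prop:SFS_torsion}, a representation $\rho_{\vec j}$ with character coordinates $n_k = n_{p_k,q_k}(j_k)$ has $\Tor(\rho_{\vec j}) = p_1 p_2 p_3 / \prod_{k=1}^3 4\sin^2(2\pi r_k n_k / p_k)$, so it suffices to check $\sin^2(2\pi r_k\, n_{p_k,q_k}(j_k)/p_k) = \sin^2(\pi r_k(j_k+1)/p_k)$ for each $k$. If $q_k$ is odd, or if $j_k$ is odd, then $n_{p_k,q_k}(j_k) = (j_k+1)/2$ and the two arguments literally coincide. If $q_k$ is even and $j_k$ is even (so that $p_k$ is odd), then $n_{p_k,q_k}(j_k) = (p_k-1-j_k)/2$, so $2\pi r_k n_k/p_k = r_k\pi - \pi r_k(j_k+1)/p_k$; since $\sin^2$ is invariant under $x\mapsto m\pi - x$ for $m\in\bbZ$, the squared sines agree. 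This proves the first displayed formula.

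\emph{Rewriting the quantum dimensions.} Because $A_k^4$ is a primitive $p_k$-th root of unity, the TLJ formulas apply with $r=p_k$, giving $|d_{j_k}(A_k)/D(A_k)| = |A_k^{2j_k+2} - A_k^{-2j_k-2}|/\sqrt{2p_k}$ after cancelling the common factor $|A_k^2-A_k^{-2}|$. Writing $A_k = -\exp(\pi i c_k/(2p_k))$, the sign disappears in $A_k^{2j_k+2} = \exp(\pi i c_k(j_k+1)/p_k)$ since the exponent $2j_k+2$ is even, so $|A_k^{2j_k+2}-A_k^{-2j_k-2}| = 2|\sin(\pi c_k(j_k+1)/p_k)|$. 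The key point is $c_k \equiv -r_k \pmod{p_k}$, which is immediate from $c_k = p_k q_k s_k - r_k$ when $q_k$ is odd and from $c_k = p_k q_k s_k - r_k(p_k-1)^2$ together with $(p_k-1)^2\equiv 1\pmod{p_k}$ when $q_k$ is even. Hence $\sin^2(\pi c_k(j_k+1)/p_k) = \sin^2(\pi r_k(j_k+1)/p_k)$, and therefore $|d_{j_k}(A_k)/D(A_k)| = 2|\sin(\pi r_k(j_k+1)/p_k)|/\sqrt{2p_k}$.

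\emph{Matching constants.} Plugging the first formula into $(2\Tor(\rho_{\vec j}))^{-1/2}$ yields $4\sqrt2\,\prod_k |\sin(\pi r_k(j_k+1)/p_k)| / \sqrt{p_1 p_2 p_3}$, which by the previous paragraph equals $2\prod_{k=1}^3 |d_{j_k}(A_k)/D(A_k)|$; since $D = D(A_1)D(A_2)D(A_3)/2$, this in turn equals $|\prod_{k=1}^3 d_{j_k}(A_k)|/D$, which is the second displayed identity. I expect the main obstacle to be purely organizational: the parity casework reconciling $n_{p,q}(j)$ with $(j+1)/2$ modulo the $\sin^2$-symmetries, together with the mod-$p_k$ manipulation of $c_k$ --- there is no conceptual difficulty beyond Proposition~\ref{prop:SFS_torsion} and the Temperley--Lieb--Jones formulas.
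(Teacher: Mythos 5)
Your proof is correct and follows exactly the route the paper intends: the paper states Lemma \ref{lem:Tor_general} as a direct consequence of Proposition \ref{prop:SFS_torsion} without writing out the details, and your parity casework for $n_{p,q}(j)$, the congruence $c_k \equiv -r_k \pmod{p_k}$, and the constant-matching all check out. Nothing is missing.
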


The main result of the section is the following theorem.
\begin{theorem}\label{thm:construct_premodular_general}
Let $M = \{0; (p_1, q_1), (p_2, q_2),  (p_3, q_3)\}$ and $\{A_k\}_{k = 1,2,3}$ be given as above. With the  operators and tensor unit given in Equations \ref{equ:irrep_to_local_general} and \ref{equ:unit_general}, respectively, the modular data constructed from $M$ matches that of the following pre-modular category, 
\begin{align*}
    \mathcal{B} := \left(\ \boxtimes_{k=1}^3\TLJ(A_k)_0\ \right) \bigoplus  \left(\ \boxtimes_{k=1}^3\TLJ(A_k)_1\ \right)
\end{align*}
\begin{proof}
Since $A_k^4$ is a primitive $p_k$-th root of unity, the label set for $\mathcal{B}$ is clearly $L:= \prod_{k=1}^3 \IntSet{p_k-2}^e\  \sqcup \ \prod_{k=1}^3 \IntSet{p_k-2}^o$, the same index set for $L(M)$. The modular data of $\mathcal{B}$ can be easily expressed in terms of that of the individual $\TLJ(A_k)$. For $\Vec{i}, \Vec{j} \in L$, 
$$d_{\Vec{j}} = \prod_{k=1}^3 d_{j_k}(A_k), \quad \theta_{\Vec{j}} =  \prod_{k=1}^3 \theta_{j_k}(A_k), \quad \tilde{S}_{\Vec{i}\,\Vec{j}} = \prod_{k=1}^3 \tilde{S}_{i_kj_k}(A_k).$$
Also, the total dimension of $\mathcal{B}$ is $D = D(A_1)D(A_2)D(A_3)/2$.

Lemma \ref{lem:CS_general} shows that, up to a global phase, the Chern-Simons invariant gives the twist,
$$e^{-2\pi i \CS(\rho_{\Vec{j}})} = \theta_{\Vec{j}},$$
and Lemma \ref{lem:Tor_general} shows that the torsion matches the absolute value of the normalized quantum dimension,
$$\bigl(2 \Tor(\rho_{\Vec{j}})\bigr)^{-\frac{1}{2}} = \frac{d_{\Vec{j}}}{D}.$$

Lastly, We check the $S$-matrix computed from loop operators. Given $\Vec{i} = (i_1,i_2,i_3), \ \Vec{j} = (j_1,j_2,j_3) \in L$, we have (choosing $\epsilon = -1$)
\begin{align*}
    W_{\Vec{i}}(\Vec{j}) &= \prod_{k=1}^3 \Tr_{\Sym^{j_k}}(-\rho_{\Vec{i}}(x_k^{c_k})).
\end{align*}
Note that,
\begin{align*}
    \Tr\bigl(\rho_{\Vec{i}}(x_k^{c_k})\bigr) \ = \ 2 \cos \frac{2 n_{p_k,q_k}(i_k) \pi c_k }{p_k} \ = \ 2 \cos \frac{(i_k+1) \pi c_k }{p_k}, 
\end{align*}
where the second equality holds irrelevant of the parity of $q_k$. Combining the previous two equations, we get
\begin{align*}
    W_{\Vec{i}}(\Vec{j}) \ =\  \prod_{k=1}^3 \Delta_{j_k}(-2 \cos \frac{(i_k+1)\pi c_k}{p_k}) \ = \  \prod_{k=1}^3 (-1)^{j_k} \frac{\sin \frac{(i_k+1)(j_k+1)\pi c_k}{p_k}}{\sin \frac{(i_k+1)\pi c_k}{p_k}},
\end{align*}
where $\Delta_{j_k}(\cdot)$ is the Chebyshev polynomial (see Equation \ref{equ:Delta}).
Therefore, the $(\Vec{j},\Vec{i})$-entry of the potential un-normalized $S$ matrix is,
\begin{align*}
     W_{\Vec{i}}(\Vec{j}) W_{\Vec{0}}(\Vec{i})  \ &=\ \prod_{k=1}^3 (-1)^{i_k+j_k} \frac{\sin \frac{(i_k+1)(j_k+1)\pi c_k}{p_k}}{\sin \frac{\pi c_k}{p_k}} \\ 
    &=\ \prod_{k=1}^3 \tilde{S}(A_k)_{j_k i_k},
\end{align*}
which is precisely $\tilde{S}_{\Vec{j}\,\Vec{i}}$ of $\mathcal{B}$.
\end{proof}
\end{theorem}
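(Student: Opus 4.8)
The plan is to reduce the statement to three already-established facts: Lemma~\ref{lem:CS_general} (Chern--Simons invariants give the twists), Lemma~\ref{lem:Tor_general} (torsions give the normalized quantum dimensions), and a direct computation of the $W$-symbols from the loop operators in Equation~\ref{equ:irrep_to_local_general}. First I would record the modular data of the candidate premodular category $\mathcal{B}$. Since $A_k^4$ is a primitive $p_k$-th root of unity, the category $\TLJ(A_k)$ has label set $\IntSet{p_k-2}$, and its even/odd decomposition has even labels $\IntSet{p_k-2}^e$ and odd labels $\IntSet{p_k-2}^o$. A $\bbZ_2$-graded product of the even subcategories together with a $\bbZ_2$-graded product of the odd subcategories therefore has label set $L = \prod_k \IntSet{p_k-2}^e \sqcup \prod_k \IntSet{p_k-2}^o$, which is exactly the index set for $L(M) = \nabchar(M)$ given by the re-arrangement via $n_{p_k,q_k}$. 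The twists, quantum dimensions, total dimension, and unnormalized $S$-matrix of $\mathcal{B}$ are then simply the coordinate-wise products of the corresponding data of the $\TLJ(A_k)$, which is standard for products of premodular categories; I would just state these product formulas.

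Next I would match the three pieces of modular data one at a time. For the twists and quantum dimensions there is nothing left to do: Lemma~\ref{lem:CS_general} already shows $e^{-2\pi i\,\CS(\rho_{\Vec j})} = (-A_1A_2A_3)\prod_k \theta_{j_k}(A_k)$, which equals $\theta_{\Vec j}$ up to the global phase $-A_1A_2A_3$ (irrelevant, as only $\theta_\alpha/\theta_0$ matters), and Lemma~\ref{lem:Tor_general} shows $(2\Tor(\rho_{\Vec j}))^{-1/2} = |\prod_k d_{j_k}(A_k)|/D = |d_{\Vec j}|/D$, matching Equation~\ref{equ:torsion_is_qdim}. The genuinely new computation is the $S$-matrix. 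Here I would take $\epsilon = -1$, so $W_{\Vec i}(\Vec j) = \prod_k \Tr_{\Sym^{j_k}}(-\rho_{\Vec i}(x_k^{c_k})) = \prod_k \Delta_{j_k}(-\Tr(\rho_{\Vec i}(x_k^{c_k})))$ by the Chebyshev description of $W$. The key identity is $\Tr(\rho_{\Vec i}(x_k^{c_k})) = 2\cos\frac{(i_k+1)\pi c_k}{p_k}$, which I would verify case-by-case on the parity of $q_k$: the eigenvalue of $\rho_{\Vec i}(x_k)$ is $e^{\pm 2\pi i\, n_{p_k,q_k}(i_k)/p_k}$, and raising to the $c_k$-th power, using $c_k \equiv -r_k \pmod{p_k}$ (resp. $c_k \equiv -r_k(p_k-1)^2 \pmod{p_k}$ when $q_k$ even) together with the definition of $n_{p_k,q_k}$, collapses $2\cos\frac{2\pi c_k\, n_{p_k,q_k}(i_k)}{p_k}$ to $2\cos\frac{(i_k+1)\pi c_k}{p_k}$ uniformly in the parity. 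Then $\Delta_{j_k}(-2\cos\theta) = (-1)^{j_k}\sin((j_k+1)\theta)/\sin\theta$ gives $W_{\Vec i}(\Vec j) = \prod_k (-1)^{j_k}\sin\frac{(i_k+1)(j_k+1)\pi c_k}{p_k}\big/\sin\frac{(i_k+1)\pi c_k}{p_k}$, and forming $\tilde S_{\Vec j\,\Vec i} = W_{\Vec i}(\Vec j)\,W_{\Vec 0}(\Vec i)$ per Equation~\ref{equ:SW_relation} cancels the denominators and yields $\prod_k (-1)^{i_k+j_k}\sin\frac{(i_k+1)(j_k+1)\pi c_k}{p_k}\big/\sin\frac{\pi c_k}{p_k}$. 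Finally I would identify this with $\prod_k \tilde S_{j_k i_k}(A_k)$ by noting that $A_k = -\exp(\frac{2\pi i}{4p_k}c_k)$, so $A_k^2 = \exp(\frac{\pi i c_k}{p_k})$ and the quantum integer $[(i_k+1)(j_k+1)]_{A_k} = \sin\frac{(i_k+1)(j_k+1)\pi c_k}{p_k}/\sin\frac{\pi c_k}{p_k}$ up to the sign $(-1)^{i_k+j_k}$ absorbed into the $\TLJ$ convention $\tilde S_{ij}(A) = (-1)^{i+j}[(i+1)(j+1)]_A$.

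The main obstacle I expect is the parity bookkeeping in the identity $\Tr(\rho_{\Vec i}(x_k^{c_k})) = 2\cos\frac{(i_k+1)\pi c_k}{p_k}$: the definition of $n_{p,q}(j)$ bifurcates according to whether both $q$ and $j$ are even, $c_k$ is defined differently depending on the parity of $q_k$, and the congruence $c_k \equiv -r_k(p_k-1)^2$ interacts with whether $p_k$ is forced odd. One must check that in every branch the extra integer shifts are multiples of $p_k$ (so they only change $\cos$ by an irrelevant sign that is in turn matched by the $(-1)^{i_k+j_k}$ in the $\TLJ$ $S$-matrix convention) and that the argument lands on $\frac{(i_k+1)\pi c_k}{p_k}$ regardless. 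Once this uniformization is in hand the rest is routine substitution, and the three matches (twist, dimension, $S$-matrix) complete the proof.
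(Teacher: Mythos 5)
Your proposal is correct and follows essentially the same route as the paper's proof: product formulas for the modular data of $\mathcal{B}$, Lemmas \ref{lem:CS_general} and \ref{lem:Tor_general} for the twists and quantum dimensions, and the $W$-symbol computation with $\epsilon=-1$ via the trace identity $\Tr(\rho_{\Vec i}(x_k^{c_k})) = 2\cos\frac{(i_k+1)\pi c_k}{p_k}$ and the Chebyshev formula. Your explicit parity bookkeeping for that trace identity is in fact more detailed than the paper, which simply asserts it holds ``irrelevant of the parity of $q_k$.''
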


The premodular category produced in the previous theorem may not be modular in general, and it depends crucially on the topology of the three manifold. For a three-component SFS $M$, it is a $\bbZ_2$ homology sphere, i.e., $H^1(M,\bbZ_2) = 0$, if and only if
\begin{align*}
    p_1p_2p_3(\frac{q_1}{p_1} + \frac{q_2}{p_2} + \frac{q_3}{p_3}) \in 2\bbZ +1
\end{align*}
\begin{lemma}\label{lemma:mtc}
Assume that $r$ is odd. Suppose that
$$T(p,j,l,*)\ =\ \sum_{m \in [p]^*}\ \left(e^{(j+l)mr\frac{\pi}{p}i}-e^{(j-l)mr\frac{\pi}{p}i}-e^{(-j+l)mr\frac{\pi}{p}i}+e^{(-j-l)mr\frac{\pi}{p}i}\right)
$$
where $*=1,0$, and $[p]^*$ denotes the set of odd integers from 1 to $p-1$ if $*$ is 1 and the set of even integers in the same range otherwise.
\\
When $p$ is odd, $j\neq l$, $j+l$ is odd,
$$T(p,j,l,*)=\left\{\begin{aligned}
0&&j+l\neq p
\\
(-1)^*p&&j+l=p
\end{aligned}
\right.
$$
When $p$ is odd, $j\neq l$, $j+l$ is even,
$$T(p,j,l,*)=0
$$
When $p$ is odd, $j=l$,
$$T(p,j,l,*)=-p
$$
When $p$ is even, $j\neq l$, $j+l$ is odd,
$$T(p,j,l,*)=0
$$
When $p$ is even, $j\neq l$, $j+l$ is even,
$$T(p,j,l,*)=\left\{\begin{aligned}
0&&j+l\neq p
\\
(-1)^*p&&j+l=p
\end{aligned}
\right.
$$
When $p$ is even, $j=l$,
$$T(p,j,l,0)=\left\{\begin{aligned}
-p&&j+l\neq p
\\
0&&j+l=p
\end{aligned}
\right.
$$
$$T(p,j,l,1)=\left\{\begin{aligned}
-p&&j+l\neq p
\\
-2p&&j+l=p
\end{aligned}
\right.
$$
\end{lemma}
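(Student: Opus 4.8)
The plan is to strip $T(p,j,l,*)$ down to a single scalar quantity and then evaluate that quantity by elementary geometric‑series arguments. First I would pair the four exponentials in the summand: since $(-j+l)=-(j-l)$ and $(-j-l)=-(j+l)$, the summand equals $\bigl(e^{(j+l)mr\pi i/p}+e^{-(j+l)mr\pi i/p}\bigr)-\bigl(e^{(j-l)mr\pi i/p}+e^{-(j-l)mr\pi i/p}\bigr)$. Hence, writing $\zeta=e^{\pi i/p}$ (a primitive $2p$‑th root of unity) and, for an integer $s$,
\[
\sigma_s\;:=\;\sum_{m\in[p]^*}2\cos\!\Bigl(\tfrac{smr\pi}{p}\Bigr)\;=\;2\,\mathrm{Re}\!\sum_{m\in[p]^*}\zeta^{\,srm},
\]
one has simply $T(p,j,l,*)=\sigma_{j+l}-\sigma_{j-l}$, and $\sigma_s$ depends only on $|s|$. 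Thus the whole lemma follows once $\sigma_s$ is computed for every integer $s$. Besides $r$ being odd I would also use $\gcd(r,p)=1$, which holds in every application of the lemma (e.g. $r=r_k$ satisfies $p_ks_k-q_kr_k=1$); I note in passing that $\gcd(r,p)=1$ alone does not suffice — the parity of $r$ genuinely matters, so the hypothesis is essential.

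To evaluate $\sigma_s$ I would compare the partial sum over $m\in\{1,\dots,p-1\}$ of a fixed parity with the \emph{complete} sums $C^{\mathrm{odd}}_s:=\sum_{0\le m\le 2p-1,\ m\ \mathrm{odd}}\zeta^{srm}$ and $C^{\mathrm{even}}_s:=\sum_{0\le m\le 2p-1,\ m\ \mathrm{even}}\zeta^{srm}$. The involution $m\mapsto 2p-m$ preserves parity, carries $\{p+1,\dots,2p-1\}$ bijectively onto $\{1,\dots,p-1\}$, and sends $\zeta^{srm}\mapsto\zeta^{-srm}=\overline{\zeta^{srm}}$; therefore each complete sum equals $2\,\mathrm{Re}\sum_{m\in[p]^*}\zeta^{srm}$ plus the contribution of the ``boundary'' indices $m=0,\,m=p$ that are fixed or omitted. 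Here $m=0$ always lies in the even block and contributes $1$, while $m=p$ lies in the odd block exactly when $p$ is odd and in the even block exactly when $p$ is even, contributing $\zeta^{srp}=(-1)^{sr}=(-1)^{s}$ (using $r$ odd). This expresses $\sigma_s$ as $C^{*}_s$ minus a boundary correction whose form depends on the parity of $p$. Finally $C^{\mathrm{odd}}_s=\zeta^{sr}\sum_{t=0}^{p-1}(\zeta^2)^{srt}$ and $C^{\mathrm{even}}_s=\sum_{t=0}^{p-1}(\zeta^2)^{srt}$ are geometric sums in the primitive $p$‑th root of unity $\zeta^2=e^{2\pi i/p}$; since $\gcd(r,p)=1$, each equals $p$ when $p\mid s$ and $0$ otherwise, and when $p\mid s$ one has $\zeta^{sr}=(-1)^{(s/p)r}=(-1)^{s/p}$ (again using $r$ odd). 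Assembling these pieces produces a short table giving $\sigma_s$ in terms of three data: the parity of $p$, the value of $*$, and whether $p\mid s$ (and, if so, the parity of $s/p$).

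It then remains to run the case analysis. For $1\le j,l\le p-1$ — the range in which the lemma is applied — one has $p\mid(j-l)\iff j=l$ and $p\mid(j+l)\iff j+l=p$; moreover $j+l\equiv j-l\pmod 2$, while $(-1)^{(j-l)/p}=1$ when $j=l$ and $(-1)^{(j+l)/p}=-1$ when $j+l=p$. Substituting these facts into $T(p,j,l,*)=\sigma_{j+l}-\sigma_{j-l}$ and reading off the table for each admissible combination of ($p$ even or odd; $j=l$ or $j\ne l$; $j+l$ even or odd; $j+l=p$ or not) reproduces exactly the values listed in the statement; each case is then a one‑line subtraction.

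The computation is entirely elementary, so the only real obstacle is bookkeeping. The $(-1)^{s}$ and $(-1)^{s/p}$ correction factors, and whether the index $m=p$ belongs to the odd or the even block, both depend on the parity of $p$; keeping these straight across the roughly dozen cases — and checking that they collapse correctly in the degenerate situations $j=l$ (where $\zeta^{sr}=1$) and $j+l=p$ (where $\zeta^{sr}$ is a specific rather than ``generic'' root of unity) — is where care is needed. Isolating the single scalar $\sigma_s$ at the outset, and separating its ``geometric‑sum'' content from its ``boundary'' content, is precisely what keeps the argument manageable.
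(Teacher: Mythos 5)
Your proposal is correct, and I verified the assembled table against all twelve cases of the statement (e.g.\ for $p$ even, $j=l$, $*=1$, $2j=p$: $\sigma^{\mathrm{odd}}_{2j}=C^{\mathrm{odd}}_{p}=-p$ and $\sigma^{\mathrm{odd}}_{0}=p$, giving $T=-2p$ as claimed). It reaches the result by a genuinely different organization than the paper. The paper also reduces to geometric series, but it does so case by case via the reflection $m\mapsto p-m$ \emph{inside} the index range $\{1,\dots,p-1\}$: for $p$ odd this reflection flips parity, so the two ``negative-exponent'' terms of the summand get reassigned to the opposite parity class and the whole expression collapses into $-\sum_{m=1}^{p-1}(-e^{(j+l)r\pi i/p})^m+\sum_{m=1}^{p-1}(-e^{(j-l)r\pi i/p})^m$; the remaining cases are then asserted to follow ``similarly,'' each requiring its own variant of the trick. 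You instead unfold to the full period $\{0,\dots,2p-1\}$ and use the parity-preserving reflection $m\mapsto 2p-m$, which isolates a single scalar $\sigma_s$ computed once, with the parity of $p$ entering only through which block the boundary index $m=p$ lands in; every case of the lemma is then a one-line subtraction $\sigma_{j+l}-\sigma_{j-l}$. What your route buys is uniformity (no case-specific manipulation of the summand) and, importantly, it surfaces the hypothesis $\gcd(r,p)=1$ that the lemma silently needs (and that holds in the application because $p_ks_k-q_kr_k=1$); the paper's proof uses this implicitly when its geometric sums vanish. What the paper's route buys is brevity in the one case it writes out. Both are sound; yours is the more complete and more carefully hedged argument.
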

\begin{proof}
We prove the lemma by direct computation.
\\
When $p$ is odd, $j\neq l$, $j+l$ is odd,
\begin{align*}
T(p,j,l,1)&=\sum_{m=1,m\text{ odd}}^{p-2}(e^{(j+l)mr\frac{\pi}{p}i}-e^{(j-l)mr\frac{\pi}{p}i}+e^{(j-l)(p-m)r\frac{\pi}{p}i}-e^{(j+l)(p-m)r\frac{\pi}{p}i})
\\
&=\sum_{m=1,m\text{ odd}}^{p-2}(e^{(j+l)mr\frac{\pi}{p}i}-e^{(j-l)mr\frac{\pi}{p}i})+\sum_{m=2,\text{ even}}^{p-1}(e^{(j-l)mr\frac{\pi}{p}i}-e^{(j+l)mr\frac{\pi}{p}i})
\\
&=-\sum_{m=1}^{p-1}(-e^{(j+l)r\frac{\pi}{p}i})^m+\sum_{m=1}^{p-1}(-e^{(j-l)r\frac{\pi}{p}i})^m
\\
&=\left\{\begin{aligned}
0&&j+l\neq p
\\
-p&&j+l=p
\end{aligned}
\right.
\\
&=-T(p,j,l,0)
\end{align*}
Similarly, we get other cases.
\end{proof}
\begin{proposition}\label{prop:modular}
Given a three-component SFS $M$, the premodular category $\mathcal{B}_M$ produced in Theorem \ref{thm:construct_premodular_general} is modular if and only if $M$ is a $\bbZ_2$ homology sphere.
\begin{proof}

Since the structure from Section \ref{sec:MTC_SFS} respects the change of parametrization of Seifert fiber space, it suffices to verify the following 5 cases for $(\frac{p_1}{q_1},\frac{p_2}{q_2},\frac{p_3}{q_3})$.
$$(\frac{odd}{odd},\frac{odd}{odd},\frac{odd}{odd}),(\frac{odd}{odd},\frac{odd}{odd},\frac{even}{odd}),(\frac{odd}{odd},\frac{even}{odd},\frac{even}{odd}),
$$
$$(\frac{even}{odd},\frac{even}{odd},\frac{even}{odd}),(\frac{odd}{odd},\frac{odd}{odd},\frac{odd}{even})
$$
The first two cases correspond to $\mathbb{Z}_2$-homology sphere.  In the following, we will explicitly calculate $S^2$, which directly implies the proposition.

When $q_1,q_2,q_3$ are odd, $j_1=j_2=j_3 \mod 2$, $l_1=l_2=l_3 \mod 2$.
\\
Up to a scalar,
$$S_{(j_1,j_2,j_3),(l_1,l_2,l_3)}=(-1)^{j_1+l_1}\prod_{k=1}^3\sin j_kl_kr_k\frac{\pi}{p_k}
$$
\begin{align*}
&{}\quad(S^2)_{(j_1,j_2,j_3),(l_1,l_2,l_3)}\\
&=\sum_{(m_1,m_2,m_3)}(-1)^{j_1+m_1+m_1+l_1}\prod_{k=1}^3\sin j_km_kr_k\frac{\pi}{p_k}\sin m_kl_kr_k\frac{\pi}{p_k}
\\
&=(-1)^{j_1+l_1}\sum_{(m_1,m_2,m_3)}\prod_{k=1}^3-\frac{1}{4}(e^{(j_k+l_k)m_kr_k\frac{\pi}{p_k}i}-e^{(j_k-l_k)m_kr_k\frac{\pi}{p_k}i}-e^{(-j_k+l_k)m_kr_k\frac{\pi}{p_k}i}
\\
&{}\quad+e^{(-j_k-l_k)m_kr_k\frac{\pi}{p_k}i})
\\
&=(-1)^{j_1+l_1}(\sum_{(m_1,m_2,m_3),m_i\text{ odd}}+\sum_{(m_1,m_2,m_3),m_i\text{ even}})...
\\
&=(-1)^{j_1+l_1}(\prod_{k=1}^3T(p_k,j_k,l_k,1)+\prod_{k=1}^3T(p_k,j_k,l_k,0))
\end{align*}

When $p_1,p_2,p_3$ are odd,
\begin{align*}
(S^2)_{(j_1,j_2,j_3),(l_1,l_2,l_3)}&=\left\{\begin{aligned}
0&&(j_1,j_2,j_3)\neq(l_1,l_2,l_3)
\\
\frac{p_1p_2p_3}{32}&&(j_1,j_2,j_3)=(l_1,l_2,l_3)
\end{aligned}
\right.
\end{align*}
When $p_1,p_2$ are odd, $p_3$ is even,
\begin{align*}
(S^2)_{(j_1,j_2,j_3),(l_1,l_2,l_3)}&=\left\{\begin{aligned}
0&&(j_1,j_2,j_3)\neq(l_1,l_2,l_3)
\\
\frac{p_1p_2p_3}{32}&&(j_1,j_2,j_3)=(l_1,l_2,l_3)
\end{aligned}
\right.
\end{align*}
Thus $S^2=cI$ for the above two cases.
\\
When $p_1$ is odd, $p_2,p_3$ are even,
$$(S^2)_{(1,1,1),(l_1,l_2,l_3)}=\left\{\begin{aligned}
\frac{p_1p_2p_3}{32}&&(l_1,l_2,l_3)=(1,1,1),(1,p_2-1,p_3-1)
\\
0&&\text{otherwise}
\end{aligned}
\right.
$$
$$(S^2)_{(1,p_2-1,p_3-1),(l_1,l_2,l_3)}=\left\{\begin{aligned}
\frac{p_1p_2p_3}{32}&&(l_1,l_2,l_3)=(1,1,1),(1,p_2-1,p_3-1)
\\
0&&\text{otherwise}
\end{aligned}
\right.
$$
$$(S^2)_{(1,1,1)}=(S^2)_{(1,p_2-1,p_3-1)}
$$
When $p_1,p_2,p_3$ are even,
$$(S^2)_{(1,1,1),(l_1,l_2,l_3)}=\left\{\begin{aligned}
\frac{p_1p_2p_3}{32}&&(l_1,l_2,l_3)=(1,1,1),(1,p_2-1,p_3-1)
\\
0&&\text{otherwise}
\end{aligned}
\right.
$$
$$(S^2)_{(1,p_2-1,p_3-1),(l_1,l_2,l_3)}=\left\{\begin{aligned}
\frac{p_1p_2p_3}{32}&&(l_1,l_2,l_3)=(1,1,1),(1,p_2-1,p_3-1)
\\
0&&\text{otherwise}
\end{aligned}
\right.
$$
$$(S^2)_{(1,1,1)}=(S^2)_{(1,p_2-1,p_3-1)}
$$
$S^2$ is degenerate for above two cases.
\\
When $q_1,q_2$ are odd, $q_3$ is even, $j_1=j_2 \mod 2$, $l_1=l_2 \mod 2$, $j_3=0 \mod 2$, $l_3=0 \mod 2$.
\begin{align*}
(S^2)_{(j_1,j_2,j_3),(l_1,l_2,l_3)}&=\prod_{k=1}^2T(p_k,j_k,l_k,1)T(p_3,j_3,l_3,0)+\prod_{k=1}^3T(p_k,j_k,l_k,0)
\end{align*}
When $p_1,p_2,p_3$ are odd,
$$(S^2)_{(1,1,2),(l_1,l_2,l_3)}=\left\{\begin{aligned}
-\frac{p_1p_2p_3}{32}&&(l_1,l_2,l_3)=(1,1,2),(p_1-1,p_2-1,2)
\\
0&&\text{otherwise}
\end{aligned}
\right.
$$
$$(S^2)_{(p_1-1,p_2-1,2),(l_1,l_2,l_3)}=\left\{\begin{aligned}
-\frac{p_1p_2p_3}{32}&&(l_1,l_2,l_3)=(1,1,2),(p_1-1,p_2-1,2)
\\
0&&\text{otherwise}
\end{aligned}
\right.
$$
$S^2$ is degenerate.
\end{proof}
\end{proposition}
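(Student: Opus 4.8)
The plan is to reduce modularity of $\mathcal{B}_M$ to the non-degeneracy of its $S$-matrix and then to compute $S^2$ explicitly. First I would recall that a premodular category is modular precisely when its $S$-matrix is non-degenerate; since $S$ is symmetric, this is equivalent to $S^2$ being invertible, so it suffices to show either that $S^2$ is a nonzero scalar multiple of a permutation matrix (the modular case, where in fact it will come out as $cI$ with $c \neq 0$) or that two of its rows coincide (the non-modular case). By Theorem~\ref{thm:construct_premodular_general} the unnormalised $S$-matrix of $\mathcal{B}_M$ is $\tilde S_{\vec i\,\vec j}=\prod_{k=1}^3 \tilde S_{i_k j_k}(A_k)$ with $\tilde S_{i_k j_k}(A_k)=(-1)^{i_k+j_k}[(i_k+1)(j_k+1)]_{A_k}$, and because $A_k^4$ is a primitive $p_k$-th root of unity each factor equals, up to a global phase independent of $\vec i,\vec j$, $(-1)^{i_k+j_k}\sin\!\big(r_k(i_k+1)(j_k+1)\tfrac{\pi}{p_k}\big)$ divided by $\sin\!\big(r_k\tfrac{\pi}{p_k}\big)$. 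Reindexing the labels by $j_k\mapsto j_k+1$ converts the label set $\nabchar(M)$ (via its bijection with $\prod_k \IntSet{p_k-2}^e\sqcup\prod_k \IntSet{p_k-2}^o$) into a disjoint union of two boxes of integers in $\{1,\dots,p_k-1\}$ on which all three coordinates have a fixed common parity.

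Second, I would use the fact that the entire construction of Section~\ref{sec:MTC_SFS} is invariant under re-parametrisation of the Seifert invariants, so that only the parities of the $p_k$ and $q_k$ matter; up to permuting the fibres this leaves exactly the five cases listed for $(p_1/q_1,p_2/q_2,p_3/q_3)$, of which the first two are precisely those for which $p_1p_2p_3(\sum_k q_k/p_k)$ is odd, i.e. those with $H^1(M,\bbZ_2)=0$. The core computation is then
\[
(S^2)_{\vec j\,\vec l}=\sum_{\vec m}S_{\vec j\,\vec m}S_{\vec m\,\vec l}.
\]
Expanding each $\sin\cdot\sin$ product into four exponentials and using that the summation set is a disjoint union of two boxes on which all coordinates share a fixed parity, the sum factors coordinatewise, giving, up to an overall nonzero constant,
\[
(S^2)_{\vec j\,\vec l}=(-1)^{j_1+l_1}\Big(\prod_{k=1}^3 T(p_k,j_k,l_k,1)+\prod_{k=1}^3 T(p_k,j_k,l_k,0)\Big),
\]
where $T(p,j,l,*)$ is exactly the sum evaluated in Lemma~\ref{lemma:mtc} and the parity label $*$ attached to $m_k$ is governed by whether $q_k$ is even or odd (this is what the reindexing functions $n_{p_k,q_k}$ keep track of, so the two products may carry different values of $*$ across $k$ when some $q_k$ is even).

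Third, I would substitute the case analysis of Lemma~\ref{lemma:mtc}. In the two homology-sphere cases one checks that off the diagonal the two products of $T$'s cancel while on the diagonal they reinforce, so that $(S^2)_{\vec j\,\vec l}$ vanishes unless $\vec j=\vec l$ and equals a fixed nonzero constant ($p_1p_2p_3/32$ up to the normalisation) when $\vec j=\vec l$; hence $S^2=cI$ with $c\neq 0$, so $S$ is invertible and $\mathcal{B}_M$ is modular. In each of the remaining three cases I would exhibit an explicit pair of distinct labels whose rows of $S^2$ agree --- for instance $(1,1,1)$ and $(1,p_2-1,p_3-1)$ when $p_1$ is odd and $p_2,p_3$ even, and the analogous pairs in the other two cases --- by invoking the $j+l=p$ clauses of Lemma~\ref{lemma:mtc}; this forces $\det S^2=0$, so $S$ is degenerate and $\mathcal{B}_M$ is not modular. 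Matching the five parity cases against the condition that $p_1p_2p_3(\sum_k q_k/p_k)$ be odd then yields the "if and only if".

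The step I expect to be the main obstacle is the second one: controlling the bookkeeping. One must verify that re-parametrisation invariance really does collapse everything to the five listed parity types in a way compatible with the $\bbZ_2$-homology condition and with the data $A_k, c_k, r_k$ entering the $S$-matrix, and one must check carefully that --- even when some $q_k$ is even --- the label set still splits as a disjoint union of two coordinatewise-constant-parity boxes, so that the coordinatewise factorisation of $(S^2)_{\vec j\,\vec l}$ into products of $T$'s holds with the correct choice of $*$ in each factor. Once this factorisation is in place, the remainder is a finite, if tedious, substitution from Lemma~\ref{lemma:mtc}.
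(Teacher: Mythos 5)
Your proposal is correct and follows essentially the same route as the paper: reduction via re-parametrisation invariance to the five parity cases, expansion of the $\sin\cdot\sin$ products into the exponential sums $T(p_k,j_k,l_k,*)$ evaluated in Lemma \ref{lemma:mtc} (with the parity label $*$ varying across $k$ when some $q_k$ is even, exactly as in the paper's $(q_1,q_2$ odd, $q_3$ even) case), yielding $S^2=cI$ in the $\bbZ_2$-homology-sphere cases and a pair of coinciding rows such as $(1,1,1)$ and $(1,p_2-1,p_3-1)$ in the degenerate ones. No substantive differences from the paper's argument.
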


It is worth noting even if every $\TLJ(A_k)$ appearing in the construction of $\mathcal{B}_M$ in Theorem \ref{thm:construct_premodular_general} is not modular, $\mathcal{B}_M$ could still be modular. For instance, for the SFS $M_0 = (0;(o,0); (5,1), (3,2), (5,4))$, the corresponding Kauffman variables are $A_1 = -e^{\frac{i \pi }{10}},\ A_2 = -e^{\frac{i \pi }{3}},\ A_3 = -e^{\frac{2 i \pi }{5}}$. It is direct to see that $\TLJ(A_1)$ is modular, but  $\TLJ(A_2)$ and $\TLJ(A_3)$ are not. However, $M_0$ is a $\bbZ_2$ homology sphere, by Proposition \ref{prop:modular}, $\mathcal{B}_{M_0}$ is modular, a rank-8 MTC.

\subsection{Examples: Realization of $\SU(2)_k$}\label{subsec:33r}
Here we study a special class of SFSs with three components, namely, $M(r):= \{0;(o,0); (3,1), (3,1), (r,1)\}$. We show explicitly that different choice of characters as the unit object may lead to different theories. In fact, it will be proved that from $M(r)$ we can construct either the MTC $\SU(2)_{r-2}$ or $\TLJ(e^{\frac{2\pi i}{4r}})$.

For each integer $r \geq 2$, there is a unitary MTC, usually denoted by $\SU(2)_{r-2}$ \cite{bonderson2007non}, which is closely related to the Temperley-Lieb-Jones categories. Here $r-2$ is called the level of the MTC. It has the same label set as $\TLJ(e^{\frac{2\pi i}{4r}})$, but differs from it in modular data by some signs. Explicitly, setting $A = e^{\frac{2\pi i}{4r}}$, the modular data for $\SU(2)_{r-2}$ is given as follows,
$$\theta_j\  =\ A^{j(j+2)}\ =\ e^{\frac{2\pi i\, j(j+2)}{4r}},$$
$$\tilde{S}_{ij}\ =\ [(i+1)(j+1)]_{A}\ = \ \frac{\sin \frac{(i+1)(j+1)\pi}{r}}{\sin \frac{\pi}{r}} .$$
In particular, its quantum dimensions are all positive (since it is unitary),
$$d_j \ =\  [j+1]_{A} \ = \ \frac{\sin \frac{(j+1)\pi}{r}}{\sin \frac{\pi}{r}},$$
and the total dimension is 
$$D = \sqrt{\frac{r}{2}}\frac{1}{\sin \frac{\pi}{r}}.$$
Note that $d_j = |d_j(A)|$ and $D = D(A)$, where $d_j(A)$ and $D(A)$ are the quantum dimension of $j$ and total dimension of $\TLJ(A)$, respectively.

We will use notations from Section \ref{subsec:variety} and \ref{subsec:MTC_SFS}. The non-Abelian characters of $M(r)$ is given by
\begin{equation}\label{equ:chiMr}
\begin{split}
    \nabchar(M(r)) = &\left\{\left(\frac{1}{2}, \frac{1}{2}, \frac{j+1}{2}, \frac{1}{2}\right)\ | \ (0,0,j) \in \{0\} \times \{0\} \times \IntSet{r-2}^{e}\right\}  \\
            \sqcup &\left\{\left(1, 1, \frac{j+1}{2}, 0\right)\ | \ (1,1,j) \in \{1\} \times \{1\} \times \IntSet{r-2}^{o}\right\}.
\end{split}
\end{equation}

Thus, each $j \in \IntSet{r-2}$ corresponds to a non-Abelian character indexed by $(j \bmod 2, j \bmod 2, j)$. We denote the corresponding representation by $\rho_{j}$ (instead of using the triple as the subscript). The eigenvalues of $\rho_j(x_3)$ are $e^{\pm \frac{(j+1)\pi i}{r}}$. The eigenvalues of $\rho_j(x_1)$ and those of $\rho_j(x_2)$ are both $e^{\pm \frac{a_j\pi i}{3}}$, where $a_j = 1$ if $j$ even and $a_j = 2$ otherwise.

Also, it is direct to see that $c_1 = c_2 = c_3 = 1$, and $A_1 = A_2 = -e^{\frac{\pi i }{6}}$, $A_3 = -e^{\frac{2\pi i}{4r}}$.

In Section \ref{subsec:MTC_SFS}, we chose the candidate label set $L(M(r))$ to be $\nabchar(M(r))$, and defined the following map from $\nabchar(M(r))$ to loop operators,
\begin{align}\label{equ:irrep_to_local_33r}
    \rho_j = & \mapsto \bigl\{ (x_1,\Sym^{j \bmod 2}), (x_2,\Sym^{j \bmod 2}), (x_3,\Sym^{j})\bigr\}.
\end{align}
It can be checked directly that for $i,j \in \IntSet{r-2}$, $\Tr(\rho_i(x_1)) = \Tr(\rho_i(x_2)) = \pm 1$, and it follows that,
\begin{align*}
    W_{i}(j) &= \Tr_{\Sym^{j \bmod 2}}(-\rho_i(x_1))\,\Tr_{\Sym^{j \bmod 2}}(-\rho_i(x_2))\,\Tr_{\Sym^{j}}(-\rho_i(x_3)) \\
    &=\Tr_{\Sym^{j}}(-\rho_i(x_3)).
\end{align*}
Hence, we may as well choose a simplified map to loop operators,
\begin{equation}\label{equ:irrep_to_local_33r_simple}
    \rho_j \mapsto \{(x_3, \Sym^j)\}.
\end{equation}
The unit object was chosen to be $\rho_0$ which corresponds to the loop operator $(x_3, \Sym^0)$.
By Theorem \ref{thm:construct_premodular_general}, the modular data match that of the premodular category,
\begin{align}
    \mathcal{B}_{M(r)} = \left(\ \boxtimes_{k=1}^3\TLJ(A_k)_0\ \right) \bigoplus  \left(\ \boxtimes_{k=1}^3\TLJ(A_k)_1\ \right).
\end{align}

Note that $\TLJ(A_1) = \TLJ(-e^{\frac{\pi i }{6}})$ has label set $\{0,1\}$, the twists $\theta_0 = 1$, $\theta_1 = i$, and un-normalized $S$-matrix,
\begin{equation*}
\tilde{S} = 
\begin{pmatrix}
1 & -1 \\
-1 & -1
\end{pmatrix}.
\end{equation*}
This means that $\mathcal{B}_{M(r)}$ has the  same twists for even labels and $S$-matrix   as $\TLJ(A_3)$. The twists for odd labels differ by a minus  sign between the two theories. Let $A(r) = -A_3 = e^{\frac{2\pi i}{4r}}$. Note that a change of the Kauffman variable from $A$ to $-A$ does not change the $S$-matrix. It follows that $\mathcal{B}_{M(r)}$ and $\TLJ(A(r))$ has the same modular data. In fact, they are isomorphic.

Therefore, by using the loop operator correspondence in Equation \ref{equ:irrep_to_local_33r_simple} and letting $\rho_0$ be the unit object, we recover the MTC $\TLJ(A(r))$.

Now we examine an alternative choice of the unit object. Since $M(r)$ is a $\bbZ_2$ homology sphere, a potential unit object $\rho_{\alpha_0}$ can be determined by the equation,
\begin{align}
    \left|\sum_{\rho \in \nabchar(M(r))} \frac{\exp(-2\pi i \CS(\rho))}{2 \Tor(\rho)}\right| = (2 \Tor(\rho_{\alpha_0}))^{-\frac{1}{2}}.
\end{align}
Such a $\rho_{\alpha_0}$ would have quantum dimension in absolute value equal to 1 in any MTC produced by $M(r)$. Since we already know that we can produce  $\TLJ(A(r))$ from $M(r)$ and the only non-unit object in $\TLJ(A(r))$ whose quantum dimension is 1 in absolute value is $\rho_{r-2}$, we can choose $\rho_{r-2}$ as the unit object in a new theory.

In this case, we reverse the previous order of the simple objects. Denote by $\tilde{\rho}_j := \rho_{r-2-j}$, $j \in \IntSet{r-2}$. Set $\tilde{\rho}_0 = \rho_{r-2}$ as the unit object. The correspondence between characters and loop operators is now defined as,
\begin{equation}\label{equ:irrep_to_local_33r_V2}
    \tilde{\rho}_j \mapsto (x_3, \Sym^j).
\end{equation}
We claim that with above choice of unit object and loop operators, the modular data produced from $M(r)$ matches that of $\SU(2)_{r-2}$ where $\tilde{\rho}_j$ corresponds to $j$ in the label set of $\SU(2)_{r-2}$. See Section \ref{subsec:MTC_SFS} for a collection of facts about $\SU(2)_{r-2}$.

Firstly, by Lemma \ref{lem:CS_general}, up to an irrelevant phase factor, 
\begin{equation}\label{equ:CSrho_j}
    \CS(\rho_j) = -\frac{j(j+2)}{4r} + \frac{1-(-1)^j}{4} \mod 1.
\end{equation}
Then rewriting above equation in terms of $\tilde{\rho}_j$, we get, again up to an irrelevant factor,
\begin{equation}\label{equ:CSrho_jtilde}
    \CS(\tilde{\rho}_j) = -\frac{j(j+2)}{4r}  \mod 1.
\end{equation}
Thus, 
\begin{equation}
    e^{-2\pi i \CS(\tilde{\rho}_j)} \ =\  e^{\frac{2\pi i \, j(j+2)}{4r}} 
\end{equation}
is the twist $\theta_j$ of $\SU(2)_{r-2}$.

Next, we check the $S$-matrix.
\begin{equation}
    W_0(j) \ =\  \Tr_{\Sym^j}(-\tilde{\rho}_0(x_3))\  =\  \Delta_j(2 \cos \frac{\pi}{r}) \ = \ \frac{\sin \frac{(j+1)\pi}{r}}{\sin \frac{\pi}{r}},
\end{equation}
and the $(j,i)$-entry of the potential $S$-matrix is,
\begin{align}
W_{i}(j)W_0(i)  &= \Tr_{\Sym^j}(-\tilde{\rho}_i(x_3))\, W_0(i) \\
           &= \Delta_j(2 \cos \frac{(i+1)\pi}{r}) \Delta_i(2\cos \frac{\pi}{r})  \\
           &=  \frac{\sin \frac{(i+1)(j+1)\pi}{r}}{\sin \frac{\pi}{r}},
\end{align}
which is $\tilde{S}_{ji}$ of $\SU(2)_{r-2}$.

Lastly, by Lemma \ref{lem:Tor_general},
\begin{align}
    \bigl(2 \Tor(\tilde{\rho}_{j})\bigr)^{-\frac{1}{2}}\ =\  \bigl(2 \Tor(\rho_{r-2-j})\bigr)^{-\frac{1}{2}} \ =\  \frac{|d_{r-2-j}(A_3)|}{D(A_3)},
\end{align}
where we used the fact that in $\TLJ(A_1) = \TLJ(A_2)$, the two simple objects have quantum dimensions $\pm 1$ and thus the dimension of the category is $D(A_1) = \sqrt{2}$. Also note that $A_3 = -e^{\frac{2\pi i}{4r}}$, then $|d_{r-2-j}(A_3)| = |d_{j}(A_3)|$ and $D(A_3)$ are equal to the quantum dimension $d_j$ and the total dimension $D$, respectively, in $\SU(2)_{r-2}$. Hence, the torsion invariant computes the normalized quantum dimension,  
\begin{align}
    \bigl(2 \Tor(\tilde{\rho}_{j})\bigr)^{-\frac{1}{2}}\ =\  \frac{d_j}{D}.
\end{align}

To summarize, for the SFS $M(r)$, two choices of the unit object together with appropriate definition of loop operators produce the MTCs  $\TLJ(e^{\frac{2\pi i}{4r}})$ and $\SU(2)_{r-2}$, with the former non-unitary and the latter unitary.

\subsection{Graded product of graded premodular categories}
In Section \ref{subsec:MTC_SFS}, we have seen that the premoduar category resulting from three-component SFSs is formed from three Temperley-Lieb-Jones categories, by taking the Deligne product of the even sectors, that of the odd sectors, and suming them up. Here we generalize the operation.

\begin{definition}\label{def:homogeneous_product}
Let $\C = \oplus_{g \in G} \C_g$ and $\D = \oplus_{g \in G} \D_g$ be two $G$-graded premodular tensor categories for some finite group $G$ (which must be Abelian). The graded product of $\C$ and $\D$ is again a $G$-graded premodular category $\C \gtensor \D = \oplus_{g \in G} (\C \gtensor \D)_g$ such that $(\C \gtensor \D)_g:= \C_g \boxtimes \D_g$.
\end{definition}

The monoidal and braiding structure on $\C \gtensor \D$ is defined in the obvious way which make it into a premodular category. Another way to see this is that $\C \gtensor \D$ is a full subcategory of the premodular category $\C \boxtimes \D$ and is closed under tensor product and braiding. The graded product operation $\gtensor$ is associative up to canonical equivalence.

For a Kauffman variable $A$, $\TLJ(A)$ is a $\bbZ_2$-graded premodular category with $\TLJ(A)_0$ spanned by even labels and  $\TLJ(A)_1$ odd labels.
Hence, Theorem \ref{thm:construct_premodular_general} states that, for a three-component SFS $M = \{0;(o,0); (p_1, q_1), (p_2, q_2),  (p_3, q_3)\}$ with $A_k, k = 1,2,3$ defined as in Section \ref{subsec:MTC_SFS}, the premodular category resulting from $M$ is $\mathcal{B}_M = \TLJ(A_1) \gtensor \TLJ(A_2) \gtensor \TLJ(A_3)$.

The graded product operation provides method to construct new premodular categories from old ones. A very interesting question is when the graded product of two pre-modular categories is modular. For instance, take $A_1 = -e^{\frac{i \pi }{6}},\ A_2 = -e^{-\frac{i \pi }{5}}$. Here $A_1$ is a primitive $12$-th root of unity and $A_2$ a primitive $5$-th root of unity. Hence $\TLJ(A_1)$ is modular of rank $2$ and $\TLJ(A_2)$ is none modular of rank $4$. Their $S$-matrices are given by,
\begin{equation}
    \tilde{S}(A_1) = 
\left(
\begin{array}{cc}
 1 & -1 \\
 -1 & -1 \\
\end{array}
\right),
\quad
\tilde{S}(A_2) = 
\left(
\begin{array}{cccc}
 1 & \varphi & \varphi & 1 \\
 \varphi & -1 & -1 & \varphi \\
 \varphi & -1 & -1 & \varphi \\
 1 & \varphi & \varphi & 1 \\
\end{array}
\right),
\end{equation}
where $\varphi = \frac{1}{2} (1-\sqrt{5})$. Then the $S$-matrix of $\TLJ(A_1) \gtensor \TLJ(A_2)$ with its simple objects ordered as $\{0 \boxtimes 0, 0 \boxtimes 2, 1 \boxtimes 1, 1 \boxtimes 3\}$ is,
\begin{equation}
    \tilde{S} = 
\left(
\begin{array}{cccc}
 1 & \varphi & -\varphi & -1 \\
 \varphi & -1 & 1 & -\varphi \\
 -\varphi & 1 & 1 & -\varphi \\
 -1 & -\varphi &-\varphi & -1 \\
\end{array}
\right),
\end{equation}
which can be checked straightforwardly to be non-degenerate. Thus $\TLJ(A_1) \gtensor \TLJ(A_2)$ is modular.

We leave the question of when the graded product of two arbitrary graded (and more generally multiple) premodular categories is modular as a future direction. In the rest of this section, we focus on the case where the group is $\bbZ_2$ and study a special class of $\bbZ_2$-graded modular categories, namely $\SU(2)_k$. For basic facts, see Section \ref{subsec:33r}.

Let $\C = \C_0 \oplus \C_1$ be a $\mathbb{Z}_2$-graded MTC. Denote by $I$ the label set of $\C$ and partition $I = I_{0} \sqcup I_{1}$ where $I_{\alpha}$ consists of objects of $I$ that are in the $\C_{\alpha}$ sector. To avoid confusion, when there is more than one MTC present, we write $I(\C), \ \tilde{S}(\C)$, etc.
\begin{proposition}
\label{prop:two_product}
Let $\C$ and $\D$ be two $\mathbb{Z}_2$-graded MTCs. Then $\C \gtensor \D$ is a proper (i.e., degenerate) premodular category if and only if there exist $i \in I(\C)$, $j \in I(\D)$, scalars $c_0(\C),\ c_1(\C),\ c_0(\D),$ and $c_1(\D) $, such that,
\begin{enumerate}
    \item $i$ and $j$ belong to sectors of the same parity;
    \item the following equations concerning $S$-entries hold:
    \begin{equation*}
        \tilde{S}(\C)_{ik} = 
        \begin{cases}
            c_0(\C) d_k(\C) & k \in I_0(\C)\\
            c_1(\C) d_k(\C) & k \in I_1(\C)
        \end{cases}
        \qquad
        \tilde{S}(\D)_{jk} = 
        \begin{cases}
            c_0(\D) d_k(\D) & k \in I_0(\D)\\
            c_1(\D) d_k(\D) & k \in I_1(\D)
        \end{cases}
    \end{equation*}
    \item $c_0(\C)/c_1(\C) = c_1(\D)/c_0(\D) \neq 1$.
\end{enumerate}
\begin{proof}
The main idea is to show that the conditions presented in the statement of the proposition are equivalent to the property that in the $S$-matrix of $\C \gtensor \D$, the row corresponding to the object $i \boxtimes j$ is proportional to the first row (i.e., the row corresponding to the unit object).
\end{proof}
\end{proposition}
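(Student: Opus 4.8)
The plan is to pass entirely to the unnormalized $S$-matrix of $\C \gtensor \D$ and reduce degeneracy to the statement that some row of that matrix, other than the unit row, is proportional to the unit row. The one non-elementary ingredient, which I would invoke up front, is the standard dictionary for premodular categories: $\C \gtensor \D$ is degenerate (proper) if and only if $\tilde{S}(\C \gtensor \D)$ is singular, if and only if its Müger center contains a nontrivial simple object, i.e.\ a simple $x \not\cong \unitobj$ with trivial double braiding against every object; for such an $x$ one has $\tilde{S}_{x,y} = d_x d_y$ for all $y$, so the $x$-row of $\tilde{S}$ equals $d_x$ times the unit row, while conversely any row other than the unit row that is proportional to the unit row makes $\tilde{S}$ singular. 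Granting this, it suffices to determine when $\C \gtensor \D$ carries a simple object $i \boxtimes j \ne \unitobj \boxtimes \unitobj$ whose $\tilde{S}$-row is proportional to the unit row. From Definition~\ref{def:homogeneous_product} the label set of $\C \gtensor \D$ is $I_0(\C) \times I_0(\D) \sqcup I_1(\C) \times I_1(\D)$, quantum dimensions multiply, and $\tilde{S}(\C \gtensor \D)_{i \boxtimes j,\, k \boxtimes l} = \tilde{S}(\C)_{ik}\, \tilde{S}(\D)_{jl}$ whenever $i,k$ (equivalently $j,l$) lie in sectors of equal parity.

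For the forward direction I would start from such an $i \boxtimes j$. Condition (1) is immediate, since being a label of $\C \gtensor \D$ forces $i$ and $j$ into sectors of the same parity. Evaluating the proportionality at the tensor unit identifies the constant as $d_{i \boxtimes j} = d_i(\C)\, d_j(\D)$, so $\tilde{S}(\C)_{ik}\, \tilde{S}(\D)_{jl} = d_i(\C) d_j(\D) d_k(\C) d_l(\D)$ for every same-parity pair $(k,l)$. Specializing $l = \unitobj_\D$ (with $k$ ranging over $I_0(\C)$) and dividing by $d_j(\D) \ne 0$ gives $\tilde{S}(\C)_{ik} = d_i(\C) d_k(\C)$ for $k \in I_0(\C)$; set $c_0(\C) := d_i(\C)$, and symmetrically (specialize $k = \unitobj_\C$) $c_0(\D) := d_j(\D)$. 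For $k \in I_1(\C)$ and $l \in I_1(\D)$ the identity reads $\bigl(\tilde{S}(\C)_{ik}/d_k(\C)\bigr)\bigl(\tilde{S}(\D)_{jl}/d_l(\D)\bigr) \equiv d_i(\C) d_j(\D)$, a nonzero constant written as a function of $k$ times a function of $l$; since both odd sectors are nonempty, separation of variables forces each factor to be constant, producing $c_1(\C), c_1(\D)$ with $c_1(\C) c_1(\D) = c_0(\C) c_0(\D)$. This is precisely condition (2) together with $c_0(\C)/c_1(\C) = c_1(\D)/c_0(\D)$. If this common ratio were $1$, the $i$-row of $\tilde{S}(\C)$ would be $d_i(\C)$ times its unit row, which by invertibility of $\tilde{S}(\C)$ forces $i = \unitobj_\C$; likewise $j = \unitobj_\D$, contradicting $i \boxtimes j \ne \unitobj \boxtimes \unitobj$. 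Hence condition (3).

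For the converse I would take $i, j$ and scalars satisfying (1)--(3). Specializing the $S$-entry formulas of (2) at the units gives $c_0(\C) = d_i(\C)$ and $c_0(\D) = d_j(\D)$, and (1) says $i \boxtimes j$ is a label of $\C \gtensor \D$. Computing $\tilde{S}(\C \gtensor \D)_{i \boxtimes j,\, k \boxtimes l} = \tilde{S}(\C)_{ik}\, \tilde{S}(\D)_{jl}$ separately on even--even labels (using $c_0(\C) c_0(\D) = d_i(\C) d_j(\D)$) and on odd--odd labels (using in addition $c_1(\C) c_1(\D) = c_0(\C) c_0(\D)$, a consequence of (3)) shows it equals $d_i(\C) d_j(\D) d_k(\C) d_l(\D) = d_{i \boxtimes j}\, d_{k \boxtimes l}$ in both cases, so the $i \boxtimes j$-row of $\tilde{S}(\C \gtensor \D)$ is proportional to the unit row; and $i \boxtimes j \ne \unitobj \boxtimes \unitobj$, since otherwise $c_0(\C) = c_1(\C) = 1$ would contradict (3). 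Hence $\tilde{S}(\C \gtensor \D)$ has a nontrivial row proportional to the unit row, so it is singular and $\C \gtensor \D$ is degenerate.

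The conceptual content is modest: apart from the first-paragraph reduction, which is the only place using nontrivial category theory, everything reduces to linear algebra on the product matrix $\tilde{S}(\C \gtensor \D)$. The two points needing care — and the main (minor) obstacles — are the separation-of-variables step, which uses that both odd components $\C_1, \D_1$ are nonempty (the gradings are faithful) and that simple objects have nonzero quantum dimension so the relevant constant does not vanish, and the appeal to invertibility of $\tilde{S}(\C)$ and $\tilde{S}(\D)$ to exclude repeated proportional rows.
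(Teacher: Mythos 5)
Your proof is correct and follows exactly the strategy the paper indicates (the paper only sketches the idea of reducing degeneracy to the row of $i \boxtimes j$ in $\tilde{S}(\C \gtensor \D)$ being proportional to the unit row); you have simply supplied the details, including the separation-of-variables step and the use of modularity of $\C$ and $\D$ to rule out the trivial case.
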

\begin{remark}
In the above proposition, the conditions $c_0(\C)/c_1(\C) \neq 1 $ and $ c_1(\D)/c_0(\D) \neq 1$ are used to eliminate the trivial case where $i$ and $j$ are both the unit object. When neither of $i$ nor $j$ is the unit object, those conditions automatically hold since otherwise the $S$-matrix of $\C$ or $\D$ would be degenerate. Also, note that if either $\C_0$ or $\D_0$ is non-degenerate, then $i$ and $j$ must be in the sector of odd parity. 
\end{remark}

For $m \geq 0$, $\SU(2)_m$ is a $\bbZ_2$-graded MTC with  $\left(\SU(2)_m \right)_0$ spanned by even labels and $\left(\SU(2)_m \right)_1$ by odd labels.
\begin{theorem}\label{thm:SU2tensor}
For $m,n \geq 0$, $\SU(2)_m \gtensor \SU(2)_n$ is an MTC if and only if the pair $(m,n)$ have different parity. In particular, $\SU(2)_m \gtensor \SU(2)_m$ is always degenerate. 
\begin{proof}
In $\SU(2)_m$, the un-normalized $S$-matrix is given by,
\begin{align*}
    \tilde{S}_{ab} = \frac{\sin \frac{(a+1)(b+1)\pi}{m+2}}{\sin \frac{\pi}{m+2}}.
\end{align*}
Hence, $\tilde{S}_{mb} = (-1)^b \tilde{S}_{0b} = (-1)^b d_b$. For $(m,n)$ with the same parity, with the notation from the statement of Proposition \ref{prop:two_product}, we choose $i = m, j = n$. Then the relevant constants are $c_0(\SU(2)_m) = c_0(\SU(2)_n) = 1$, $c_1(\SU(2)_m) = c_1(\SU(2)_n) = -1$ which satisfies the conditions stated in that proposition, and hence $\SU(2)_m \gtensor \SU(2)_n$ is degenerate. For the converse direction, it can be seen that the only non-unit simple object in $\SU(2)_m$ for which $c_0(\SU(2)_m)$ and $c_1(\SU(2)_m)$ exist is the object $m$. Therefore, if $(m,n)$ have different parity, the only pair of indexes for $(i,j)$ is $(m,n)$ which contradicts the first condition of Proposition \ref{prop:two_product}. This implies that $\SU(2)_m \gtensor \SU(2)_n$ is non-degenerate.
\end{proof}
\end{theorem}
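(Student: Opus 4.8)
The plan is to reduce the statement to Proposition~\ref{prop:two_product} by first pinning down, for a single factor $\SU(2)_m$, which simple objects can play the role of the distinguished object $i$ in that proposition. Concretely, I would establish the following \emph{key lemma}: a simple object $i$ of $\SU(2)_m$ admits scalars $c_0,c_1$ with $\tilde S_{ik}=c_0\,d_k$ for every even label $k$ and $\tilde S_{ik}=c_1\,d_k$ for every odd label $k$ if and only if $i$ is the unit $0$ or the top label $m$; moreover $i=0$ forces $c_0=c_1=1$, and $i=m$ forces $c_0=1$, $c_1=-1$.

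For the lemma, the ``$i=m$'' direction is the explicit identity $\tilde S_{mk}=[(m+1)(k+1)]_A=\dfrac{\sin\frac{(m+1)(k+1)\pi}{m+2}}{\sin\frac{\pi}{m+2}}=(-1)^k d_k$, using $\sin\frac{(m+1)(k+1)\pi}{m+2}=(-1)^k\sin\frac{(k+1)\pi}{m+2}$, and the ``$i=0$'' direction is immediate from $\tilde S_{0k}=d_k$. For the converse, suppose $i\neq 0$ is such an object. Since $\SU(2)_m$ is unitary with a real symmetric $S$-matrix, evaluating at the unit gives $c_0=\tilde S_{i0}/d_0=d_i>0$; orthogonality of the $i$-th and $0$-th rows of $\tilde S$ gives $c_0 D_0^2+c_1 D_1^2=0$, where $D_\epsilon^2:=\sum_{k\in I_\epsilon(\SU(2)_m)}d_k^2$; and because the even and odd subcategories of $\SU(2)_m$ have equal global dimension (exactly as for $\TLJ\!\left(e^{2\pi i/(4(m+2))}\right)$ recalled in Section~\ref{subsec:MTC_SFS}, $\SU(2)_m$ having the same quantum dimensions up to sign), we get $D_0=D_1$ and hence $c_1=-c_0=-d_i$. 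Finally unitarity of the normalized $S$-matrix gives $D^2=\sum_k|\tilde S_{ik}|^2=c_0^2 D_0^2+c_1^2 D_1^2=d_i^2 D^2$, so $d_i=1$; in $\SU(2)_m$ the only labels of quantum dimension $1$ are $0$ and $m$, forcing $i=m$.

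With the lemma in hand I would conclude by applying Proposition~\ref{prop:two_product} to $\C=\SU(2)_m$ and $\D=\SU(2)_n$, both $\bbZ_2$-graded by parity of labels. If $\SU(2)_m\gtensor\SU(2)_n$ is degenerate, there are $i\in I(\SU(2)_m)$, $j\in I(\SU(2)_n)$ satisfying conditions (1)--(3) of that proposition. Condition (2) together with the lemma forces $i\in\{0,m\}$, $j\in\{0,n\}$; condition (3) requires $c_0(\C)/c_1(\C)\neq 1$, which excludes $i=0$, so $i=m$, and likewise $j=n$. For $i=m$ one reads off $c_0(\C)/c_1(\C)=-1$ and for $j=n$ one reads off $c_1(\D)/c_0(\D)=-1$, so (3) is automatic, and the only surviving requirement is (1), that the labels $m$ and $n$ lie in graded sectors of the same parity, i.e.\ $m\equiv n\pmod 2$. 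Conversely, when $m\equiv n\pmod 2$ the choice $i=m$, $j=n$ meets all of (1)--(3) by the same computation, so $\SU(2)_m\gtensor\SU(2)_n$ is degenerate. Since a graded product of modular categories is premodular, this shows $\SU(2)_m\gtensor\SU(2)_n$ is modular precisely when $m$ and $n$ have different parity, and in particular it is degenerate whenever $m=n$.

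The main obstacle is the key lemma itself, namely ruling out every ``intermediate'' label of $\SU(2)_m$ as a candidate for $i$; everything after that is bookkeeping with Proposition~\ref{prop:two_product}. My argument routes through the equality $D_0=D_1$; if one preferred not to borrow it from the $\TLJ$ discussion, one could instead combine the universal bound $|\tilde S_{ik}|\le d_i d_k$ (which gives $|c_0|,|c_1|\le d_i$) with the Chebyshev description $\tilde S_{ik}/d_k=\Delta_i\!\left(2\cos\frac{(k+1)\pi}{m+2}\right)$ and the observation that a degree-$i$ polynomial taking at most two values on the $m+1$ distinct nodes $2\cos\frac{(k+1)\pi}{m+2}$ must be constant or equal to $\pm\Delta_m$; this combinatorial step is where the care lies.
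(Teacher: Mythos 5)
Your proof is correct and follows the same overall route as the paper: both reduce the theorem to Proposition \ref{prop:two_product} and identify $(i,j)=(m,n)$ as the only possible witness of degeneracy, so that degeneracy is equivalent to $m$ and $n$ lying in sectors of the same parity. The one substantive difference is that the paper simply asserts (``it can be seen that\dots'') that the top label $m$ is the only non-unit object admitting the constants $c_0,c_1$, whereas you actually prove this: your argument via $c_0=d_i$, row-orthogonality $c_0D_0^2+c_1D_1^2=0$, the equality $D_0=D_1$ of the even and odd sector dimensions, and the row-norm identity $\sum_k|\tilde S_{ik}|^2=D^2$ forcing $d_i=1$ is clean and fills the only real gap in the paper's proof; the Chebyshev alternative you sketch would serve the same purpose without invoking unitarity. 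The only caveat, shared equally by the paper, is the degenerate edge case $m=0$ (or $n=0$), where the odd sector is empty and the dichotomy ``$i=0$ forces $c_0=c_1=1$'' versus ``$i=m$ forces $c_1=-1$'' collapses; the conclusion still holds there but requires a separate one-line check.
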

\begin{example}
By Theorem \ref{thm:SU2tensor}, $\SU(2)_2 \gtensor \SU(2)_3$ is an MTC of rank $6$. Its un-normalized $S$-matrix and $T$-matrix are given by,
\begin{equation*}
  \tilde{S}=  \left(
\begin{array}{cccccc}
 1 & \frac{1}{2} \left(1+\sqrt{5}\right) & 1 & \frac{1}{2} \left(1+\sqrt{5}\right) & \frac{1+\sqrt{5}}{\sqrt{2}} & \sqrt{2}
   \\
 \frac{1}{2} \left(1+\sqrt{5}\right) & -1 & \frac{1}{2} \left(1+\sqrt{5}\right) & -1 & -\sqrt{2} &
   \frac{1+\sqrt{5}}{\sqrt{2}} \\
 1 & \frac{1}{2} \left(1+\sqrt{5}\right) & 1 & \frac{1}{2} \left(1+\sqrt{5}\right) & -\frac{1+\sqrt{5}}{\sqrt{2}} & -\sqrt{2}
   \\
 \frac{1}{2} \left(1+\sqrt{5}\right) & -1 & \frac{1}{2} \left(1+\sqrt{5}\right) & -1 & \sqrt{2} &
   -\frac{1+\sqrt{5}}{\sqrt{2}} \\
 \frac{1+\sqrt{5}}{\sqrt{2}} & -\sqrt{2} & -\frac{1+\sqrt{5}}{\sqrt{2}} & \sqrt{2} & 0 & 0 \\
 \sqrt{2} & \frac{1+\sqrt{5}}{\sqrt{2}} & -\sqrt{2} & -\frac{1+\sqrt{5}}{\sqrt{2}} & 0 & 0 \\
\end{array}
\right)
\end{equation*}
\begin{equation*}
  T =   \left(
\begin{array}{cccccc}
 1 & 0 & 0 & 0 & 0 & 0 \\
 0 & e^{\frac{4 i \pi }{5}} & 0 & 0 & 0 & 0 \\
 0 & 0 & -1 & 0 & 0 & 0 \\
 0 & 0 & 0 & -e^{\frac{4 i \pi }{5}} & 0 & 0 \\
 0 & 0 & 0 & 0 & e^{\frac{27 i \pi }{40}} & 0 \\
 0 & 0 & 0 & 0 & 0 & -i e^{\frac{3 i \pi }{8}} \\
\end{array}
\right)
\end{equation*}
Since $\SU(2)_2 \gtensor \SU(2)_3$ contains the even part of $\SU(2)_3$ as a subcategory which is itself an MTC (Fibonacci), $\SU(2)_2 \gtensor \SU(2)_3$ must split. In fact, $\SU(2)_2 \gtensor \SU(2)_3 \simeq \text{Fib} \boxtimes \TLJ(-i e^{\frac{\pi i}{8}})$.
\end{example}

\section{Modular tensor categories from SOL geometry}\label{SOL}

\subsection{Character varieties of torus bundles over the circle}\label{subsec:character_torus}

One of the non-hyperbolic geometries is SOL and some examples of closed manifolds are torus bundles over the circle with Anosov monodromy maps.

Let $M$ be a torus bundle over $S^1$ with the monodromy map $\begin{pmatrix}a&b\\c&d\end{pmatrix}\in \SL(2,\mathbb{Z})$ where $|a+d|>2$. Its fundamental group has the presentation,
\begin{equation}\label{equ:torus_pi1}
\pi_1(M)=\langle x,y,h\ |\ x^ay^c=h^{-1}xh,\ x^by^d=h^{-1}yh,\ xyx^{-1}y^{-1}=1 \rangle,
\end{equation}
where $x$ and $y$ are the meridian and longitude, respectively, on the torus, and $h$ corresponds to a loop around the $S^1$ component. We consider non-Abelian characters of $M$ to $\SL(2,\bbC)$. Let $\rho: \pi_1(M) \to \SL(2,\bbC)$ be a non-Abelian representation.

First, we consider the case where $\rho(x)$ is diagonalizable. Up to conjugation, assume $\rho(x)$ is diagonal. Since $y$ commutes with $x$, $\rho(y)$ is also diagonal, and moreover, $\rho(x)$ and $\rho(y)$ cannot be both contained in the center $\{\pm I\}$. (Otherwise, the image of $\rho$ would be Abelian.) If $\rho(x) \neq \pm I$, it follows from the relation  $x^ay^c=h^{-1}xh$ that $\rho(h)$, up to conjugation, simply permutes the two eigenvectors of $\rho(x)$. The same conclusion is obtained if $\rho(y) \neq \pm I$. Hence, we may assume $\rho$ takes the following form (abbreviating $\rho(x)$ simply as $x$),
\begin{equation}\label{equ:torus_irrep_form}
    x=
    \begin{pmatrix}
    \alpha &0\\
    0&\alpha^{-1}
    \end{pmatrix},\quad 
    y=
    \begin{pmatrix}
   \beta&0\\
    0&\beta^{-1}
    \end{pmatrix},\quad
    h=
    \begin{pmatrix}
    0&1\\
    -1&0
    \end{pmatrix},
\end{equation}
where $\text{Im}(\alpha) \geq 0$ and either $\alpha \neq \pm 1$ or $\beta \neq \pm 1$. The presentation of $\pi_1(M)$ yields the following equations for $\rho$,
\begin{equation}\label{equ:torus_rep_equation1}
   \alpha^{a+1} \beta^c\ =\ \alpha^b \beta^{d+1} \ =\  1,
\end{equation}
from which we deduce the relations,
\begin{equation}
    \alpha^{a+d+2} \ = \ \beta^{a+d+2} \ = \ 1.
\end{equation}
Let $N = |a+d+2|$. Hence $\alpha$ and $\beta$ are both $N$-th root of unity. Set $\alpha = e^{\frac{2 \pi i\, k}{N}},\ \beta = e^{\frac{2 \pi i\, l}{N}}$ such that $0 \leq k \leq \frac{N}{2},\ 0 \leq l < N$, and either $k \neq 0, \frac{N}{2}$ or $l \neq 0, \frac{N}{2}$. Then, Equation \ref{equ:torus_rep_equation1} can be equivalently written as,
\begin{equation}\label{equ:torus_rep_equation2}
\begin{split}
    (a+1)\,k + c\, l &= 0 \mod N\\
    b\, k + (d+1)\,l &= 0 \mod N
\end{split}
\end{equation}
The solutions to Equation \ref{equ:torus_rep_equation2} depend on a number of conditions involving $a,\ b, \ c,$ and $d$. When at least one of $a+1,\ c,\ b,\ d+1$ is co-prime to $N$, there is a compact form to organize all the solutions. For instance, when $(c, N)$ are co-prime, the solutions are simply given by,
\begin{equation}
    l = -\tilde{c}(a+1)k \mod N, \quad k = 1, \cdots, \floor{\frac{N-1}{2}},
\end{equation}
where $\tilde{c}$ is the multiplicative inverse of $c$ in $\bbZ_N$. The representations thus obtained are all irreducible.

Now we consider the case where $\rho(x)$ is not diagonalizable. Then neither is $\rho(y)$ diagonalizable. Up to conjugation, we may assume that $\rho(x)$ and $\rho(y)$ are both upper triangular, each have a single eigenvalue $+1$ or $-1$ lying on the diagonal, and $\rho(h)$ is diagonal. Thus, $\rho$ takes the form,
\begin{equation}\label{equ:torus_reducible_form}
    x=(-1)^{\epsilon_x}
    \begin{pmatrix}
    1&1\\
    0&1
    \end{pmatrix},\quad
    y=(-1)^{\epsilon_y}
    \begin{pmatrix}
    1&u\\
    0&1
    \end{pmatrix},\quad
    h=
    \begin{pmatrix}
    v&0\\
    0&v^{-1}
    \end{pmatrix},
\end{equation}
where $\epsilon_x, \epsilon_y \in \{0,1\}$ and $u \neq 0$. From the presentation of $\pi_1(M)$, we deduce the equations to be satisfied,
\begin{equation}\label{equ:torus_rep_equation3}
\begin{split}
    (a+1)\,\epsilon_x + c\, \epsilon_y &= 0 \mod 2\\
    b\, \epsilon_x + (d+1)\,\epsilon_y &= 0 \mod 2
\end{split}
\end{equation}
\begin{equation}\label{equ:torus_rep_equation4}
\begin{split}
    c \, u^2 + (a-d) u - b = 0, \quad v^2 = \frac{1}{cu+a}.
\end{split}
\end{equation}
Equation \ref{equ:torus_rep_equation4} is equivalent to,
\begin{equation}\label{equ:torus_rep_equation5}
\begin{split}
    (v+v^{-1})^2 = a+d+2, \quad u = \frac{v^{-2}-a}{c}.
\end{split}
\end{equation}
From Equation \ref{equ:torus_rep_equation5}, we see that for each fixed $\epsilon_x$ and $\epsilon_y$, there are four inequivalent representations, but only two characters. We choose a representative for each character by setting,
\begin{equation}\label{equ:torus_rep_equation6}
    u = \frac{d-a + \sqrt{(a+d)^2-4}}{2c}, \quad v^2 = \frac{1}{cu+a} = \frac{a+d - \sqrt{(a+d)^2-4}}{2}.
\end{equation}

The solution set to Equation \ref{equ:torus_rep_equation3} depends on the parity of the entries of the monodromy matrix. Let $P$ be the quadruple that records the parity of the entries $(a,d;b,c)$ and we use $`e\textrm'$ to denote for `even' and $`o\textrm'$ for `odd'. For instance, $P = (e,e;o,e)$ means $b$ is odd and the rest are even.
The solutions contain the following possible values for $\epsilon_x$ and $\epsilon_y$,
\begin{itemize}
    \item $\epsilon_x = 0, \ \epsilon_y = 0$;
    \item $\epsilon_x = 1, \ \epsilon_y = 1$, only if $P = (e,e;o,o)$ or $P= (o,o; e,e)$;
    \item $\epsilon_x = 0, \ \epsilon_y = 1$, only if $P = (o,o;o,e)$ or $P= (o,o; e,e)$;
    \item $\epsilon_x = 1, \ \epsilon_y = 0$, only if $P = (o,o;e,o)$ or $P= (o,o; e,e)$.
\end{itemize}
Note that the last three cases above all imply that $N = |a+d+2|$ is even and all possible configurations of $P$ that have $N$ even are contained in one (or more) of the last three cases.

To summarize, the non-Abelian characters of $M$ contain two types, the irreducible and the reducible ones. The irreducible characters take the form of Equation \ref{equ:torus_irrep_form} and are determined by Equation \ref{equ:torus_rep_equation2}. The reducible characters take the form of Equation \ref{equ:torus_reducible_form} and are determined by Equation \ref{equ:torus_rep_equation6} and the possible values of $\epsilon_x$ and $\epsilon_y$ discussed above.

\subsection{Torsion and Chern-Simons invariant of torus bundles}\label{subsec:torus_torsion_CS}
In this subsection, we compute the torsion and Chern-Simons invariant for the torus bundle over the circle $M$ with the monodromy map $\begin{pmatrix}a&b\\c&d\end{pmatrix}\in \SL(2,\mathbb{Z})$ where $|a+d|>2$. Its fundamental group has a presentation given in Equation \ref{equ:torus_pi1}.

Construct a cell structure for $M$ as follows. See Figure \ref{fig:torus_cell}. The cell structure contains,
\begin{itemize}
    \item a single $0$-cell $v$;
    \item three $1$-cells corresponding to the generators $x, \ y$, and $h$ in the presentation of  $\pi_1(M)$;
    \item three $2$-cells corresponding to the three relations in the presentation of $\pi_1(M)$. Explicitly, denote them by $s_1, \ s_2$ and $s_3$ such that $\partial s_1 = yxy^{-1}x^{-1}$, $\partial s_2 = h^{-1}xh (x^ay^c)^{-1}$, and $\partial s_3 = h (x^by^d)h^{-1} y^{-1}$. Graphically, $s_1, \ s_2$ and $s_3$ correspond to the top face, the back face, and the left face, respectively, in Figure \ref{fig:torus_cell} with the induced orientation of the cube.
    \item a single $3$-cell $t$. Think of a $3$-cell as a cube. Then the attaching map is determined by the identification of faces described in Figure \ref{fig:torus_cell}.
\end{itemize}
\begin{figure}
    \centering
\begin{tikzpicture}[scale = 2, thick, >=stealth]
\draw[red,myarrow = {0.5}{>}] (0,0)--(1,0);
\draw[red,myarrow = {0.5}{>}] (0.5,0.5)--(1.5,0.5);
\draw[myarrow = {0.5}{>}] (0,-1)--(1,-1)node[pos = .5, anchor =  north]{$x^ay^c$};
\draw[myarrow = {0.5}{>},dashed] (0.5,-0.5)--(1.5,-0.5);

\draw[green, myarrow = {0.75}{>},myarrow = {0.65}{>},myarrow = {0.7}{>}] (0,0)--(0,-1);
\draw[green, myarrow = {0.75}{>},myarrow = {0.65}{>},myarrow = {0.7}{>}] (1,0)--(1,-1);
\draw[green, myarrow = {0.75}{>},myarrow = {0.65}{>},myarrow = {0.7}{>},dashed] (0.5,0.5)--(0.5,-0.5);
\draw[green, myarrow = {0.75}{>},myarrow = {0.65}{>},myarrow = {0.7}{>}] (1.5,0.5)--(1.5,-0.5);

\draw[blue,myarrow = {0.6}{>},myarrow = {0.68}{>}] (0.5,0.5)--(0,0);
\draw[blue,myarrow = {0.6}{>},myarrow = {0.68}{>}] (1.5,0.5)--(1,0);
\draw[myarrow = {0.6}{>},myarrow = {0.68}{>},dashed] (0.5,-0.5)--(0,-1);
\draw[myarrow = {0.6}{>},myarrow = {0.68}{>}] (1.5,-0.5)--(1,-1)node[pos = .5, anchor =  west]{$x^by^d$};

\draw[->] (2,-0.5)--(2,0.5);
\node at (2.6,0) {$\Phi = \begin{pmatrix}
a & b\\
c & d\\
\end{pmatrix}$};

\node at (0.5,0.6) {$v$};
\node at (1,0.6) {$x$};
\node at (-0,0.25) {$y$};
\node at (0.4,-0.3) {$h$};
\end{tikzpicture}
\caption{A cell structure for the torus bundle with monodromy matrix $\Phi$
For convenience but no other purposes, mark the vertical edges green, the horizontal on the top face red, and the $45^o$-slope edges on the top face blue. Edges of the same color and the same arrow are identified. The front and back faces are identified by the obvious map, and so are the left and right side faces. The bottom face is identified to the top via the monodromy map $\Phi$. Hence, the single-arrow edge and the double-arrow edge  at the bottom face are homotopic to $x^ay^c$ and $x^by^d$, respectively.} \label{fig:torus_cell}
\end{figure}
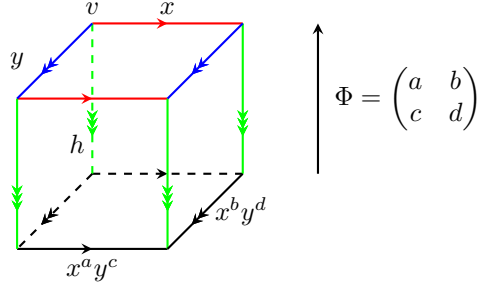

Let $V$ be a representation $\rho: \pi_1(M) \to GL(V)$, and let $\{v_j\ | \ j = 1, 2, \cdots\}$ be an arbitrary basis of $V$. We now construct the chain complex.
For simplicity, assume that $a,b,c,d\geq0$, $a\geq c$, $b\geq d$. Other cases can be dealt similarly. Fix an arbitrary preimage $\tilde{v}$ of $v$. For each other cell $\sigma$, fix a lifting $\tilde{\sigma}$ starting at the base point $\tilde{v}$. We have the following chain complex,
$$0\longrightarrow C_3\stackrel{\partial_3}{\longrightarrow}C_2\stackrel{\partial_2}{\longrightarrow}C_1\stackrel{\partial_1}{\longrightarrow}C_0\longrightarrow0
$$
where $C_i=C_i(\widetilde{M})\otimes_{\mathbb{Z}[\pi_1(M)]}V$. As a vector space, $C_i$ has the following basis, $C_3 = \text{span}\{\tilde{t} \otimes v_j\ | \ j = 1,2, \cdots\}$, $C_2 = \text{span}\{\tilde{s}_i \otimes v_j\ | \ i = 1,2,3,\  j = 1,2, \cdots\}$, $C_1 = \text{span}\{\tilde{\sigma} \otimes v_j\ | \ \sigma = x,y,h,\  j = 1,2, \cdots\}$, $C_0 = \text{span}\{\tilde{v} \otimes v_j\ | \ j = 1,2, \cdots\}$. We present the boundary map $\partial_i$ as a block matrix with each entry a $\dim(V) \times \dim(V)$ block. Also, denote $S: \bbZ[\pi_1(M)] \to \bbZ[\pi_1(M)]$ the antipode map that sends a group element $g \in \pi_1(M)$ to its inverse $g^{-1}$ and linearly extends to the whole ring. Lastly, for a matrix $A$ with entries in $\bbZ[\pi_1(M)]$, $\rho \circ S (A)$ is meant applying $\rho \circ S$ to every entry of $A$. With the above conventions, the boundary map is given by,

$$\partial_3=\rho \circ S \begin{pmatrix}
1-hw(x,y)\\
1-y\\
1-x
\end{pmatrix}
$$

$$\partial_2=\rho \circ S
\begin{pmatrix}
y-1 & 1-h\sum_{i=1}^{a-1}x^i & h\sum_{i=1}^{b-1}x^i\\
1-x & -hx^a\sum_{i=1}^{c-1}y^i & hx^b\sum_{i=1}^{d-1}y^i-1\\
0   & x-1&1-y
\end{pmatrix}
$$

$$\partial_1=\rho \circ S
\begin{pmatrix}
x-1&y-1&h-1
\end{pmatrix}
$$
where $w$ is a polynomial of $x,y$ with the sum of its coefficients equal to 1.

For each of the non-Abelian characters of $\pi_1(M)$ to $\SL(2,\bbC)$, we will compute its torsion below and show (implicitly) that the associated  chain complex is always acyclic and the torsion does not depend on the representation chosen in the equivalence class of a character.

For an irreducible representation $\rho$ given in Equation \ref{equ:torus_irrep_form} that satisfies Equation \ref{equ:torus_rep_equation2}, its adjoint representation has the form,
$$x=
\begin{pmatrix}
\alpha^2&0&0\\
0&1&0\\
0&0&\alpha^{-2}
\end{pmatrix},\  
y=
\begin{pmatrix}
\beta^2&0&0\\
0&1&0\\
0&0&\beta^{-2}
\end{pmatrix},\ h=
\begin{pmatrix}
0&0&-1\\
0&-1&0\\
-1&0&0
\end{pmatrix}
$$
Denote by $I$ and $O$ and $3 \times 3$ identity matrix and zero matrix, respectively, and let
$$
A=\begin{pmatrix}1&0&0\\0&0&0\\0&1&0\end{pmatrix}, B=\begin{pmatrix}0&0&0\\0&0&1\\0&0&0\end{pmatrix}.
$$
Define the block matrices,
$$
K_1 = 
\begin{pmatrix}
A \\
O \\
B\\
\end{pmatrix}, \  
K_2 =
\begin{pmatrix}
O & A \\
I & O\\
O & B\\
\end{pmatrix}, \ 
K_3 = 
\begin{pmatrix}
I 
\end{pmatrix}.
$$
It can be checked directly that the columns (as vectors in $C_{i-1}$) of $\partial_iK_i$ is a basis of $\Ima(\partial_i)$. Set $K_4 = K_0$ to be the empty matrix. Now for $i = 0,1,2,3$, let
\begin{equation*}
A_i = 
\setlength{\dashlinegap}{2pt}
\left(
\begin{array}{c:c}
\partial_{i+1}K_{i+1}  & K_i
\end{array} \right),
\end{equation*}
then the columns of $A_i$ give a basis for $C_i$.  By direct calculations, we obtain the torsion,
\begin{equation*}
    \Tor(\rho) \ =\  \left|\frac{\det(A_1)\det(A_3)}{\det(A_0)\det(A_2)}\right|\ =\ \frac{|a+d+2|}{4}.
\end{equation*}

Now we compute the torsion of the reducible representations $\rho$ given in Equation \ref{equ:torus_reducible_form}. The associated adjoint representation takes the form,
\begin{equation*}
    x = 
\left(
\begin{array}{ccc}
 1 & -2 & -1 \\
 0 & 1 & 1 \\
 0 & 0 & 1 \\
\end{array}
\right),\ 
y = 
\left(
\begin{array}{ccc}
 1 & -2 u & -u^2 \\
 0 & 1 & u \\
 0 & 0 & 1 \\
\end{array}
\right),\ 
h = 
\left(
\begin{array}{ccc}
 v^2 & 0 & 0 \\
 0 & 1 & 0 \\
 0 & 0 & \frac{1}{v^2} \\
\end{array}
\right),
\end{equation*}
which are clearly independent on the sign terms $\epsilon_x$ and $\epsilon_y$. Let,
\begin{equation*}
\begin{split}
    A=\begin{pmatrix}0&0&0\\1&0&0\\0&1&0\end{pmatrix},\  B=\begin{pmatrix}0&0&0\\0&0&1\\0&0&0\end{pmatrix},\  C=\begin{pmatrix}0&0&0\\0&0&0\\1&0&0\end{pmatrix}, \\
    D=\begin{pmatrix}0&0&0\\0&1&0\\0&0&1\end{pmatrix},\  E=\begin{pmatrix}0&0&0\\0&0&0\\0&0&1\end{pmatrix},\  F=\begin{pmatrix}1&0&0\\0&0&0\\0&1&0\end{pmatrix}.
\end{split}
\end{equation*}
Define the block matrices,
$$
K_1 = 
\begin{pmatrix}
E \\
O \\
F\\
\end{pmatrix}, \  
K_2 =
\begin{pmatrix}
A & O \\
B & C\\
O & D\\
\end{pmatrix}, \ 
K_3 = 
\begin{pmatrix}
I 
\end{pmatrix}.
$$
The matrices $K_i$ have the same properties as outlined in the case of irreducible representations above, and in the same way define the matrices $A_i$. It can be computed that,
\begin{equation*}
    \Tor(\rho) \ =\  \left|\frac{\det(A_1)\det(A_3)}{\det(A_0)\det(A_2)}\right|\ =\ |a+d+2|.
\end{equation*}
Some details for the derivation are as follows, where the condition $cu^2+(a-d)u-b=0$ is used to simplify expressions,
\begin{align*}
\Tor(\rho)&=|\frac{(2cu+a-d)(b-u+du)(a-b+1+(c-d-1)u)}{u(1-cu-a)^2(u-1)}|
\\
&=|\frac{(2cu+a-d)(b-u+du)(a-b+1+(c-d-1)u)}{(cu^2+(a-1)u)(cu^2+(a-1-c)u-a+1)}|
\\
&=|\frac{(2cu+a-d)(b-u+du)(a-b+1+(c-d-1)u)}{((d-1)u+b)((d-c-1)u+b-a+1)}|
\\
&=|\frac{(d-c-1)u+b-a+1}{2(c-d-1)cu^2+(2c(a-b+1)+(a-d)(c-d-1))u+(a-d)(a-b+1)}|
\\
&=|\frac{(2c(a-b+1)-(a-d)(c-d-1))u+(a-d)(a-b+1)+2b(c-d-1)}{(d-c-1)u+b-a+1}|
\\
&=|\frac{(a+d+2)((d-c-1)u+b-a+1)}{(d-c-1)u+b-a+1}|
\\
&=|a+d+2|.
\end{align*}

Now, we compute the CS invariant of $M$. Any irreducible representation of $\pi_1(M)$ to $\SL(2,\bbC)$ can be conjugated to one into $\SU(2)$ (see Equation \ref{equ:torus_irrep_form}), and Kirk and  Klassen computed its CS invariant in \cite{kirk90}. Here we use methods in Section \ref{subsec:CS_review} to compute the CS invariant of both irreducible and reducible but indecomposable ones, the latter of which can not be conjugated to $\SU(2)$.

Let $T_i \ (i =A,B)$ be two copies of the torus, and $I$ be the interval $[0,1]$. Then $M$ is obtained by gluing the two $T_i \times I$ such that $T_B \times \{0\}$ is glued to $T_A \times \{1\}$ via the identity map and $T_B \times \{1\}$ is glued to $T_A \times \{0\}$ via the map $\begin{pmatrix}a&b\\c&d\end{pmatrix}$. Let $(\mu_i, \lambda_i)$ be a positive basis of $H_1(T_i)$ so that, under the embedding $T_i \times I \hookrightarrow M$, $\mu_i$ and $\lambda_i$ are sent to $x$ and $y$, respectively. For $\kappa = 0,1$, denote by $\mu_i^{\kappa}$ the element of $H_1(T_i \times \{\kappa\})$ that corresponds to $\mu_i$ in $H_1(T_i \times I)$, and by $\lambda_i^{\kappa}$ in a similar way. Then $(\mu_i^{1}, \lambda_i^{1})$ is a positive basis for $H_1(T_i \times \{1\})$ and $(-\mu_i^{0}, \lambda_i^{0})$ is a positive basis for $H_1(T_i \times \{0\})$. These basis are identified as follows,
\begin{equation*}
    (\mu_B^0, \lambda_B^0) = (\mu_A^1, \lambda_A^1), \qquad
    (\mu_B^1, \lambda_B^1) = 
    (\mu_A^0, \lambda_A^0)\,
    \begin{pmatrix}a&b\\c&d\end{pmatrix}.
\end{equation*}

Set $N = |a+d+2|$. For an irreducible representation $\rho$ in Equation \ref{equ:torus_irrep_form} where $\alpha = e^{\frac{2\pi i\,k}{N}}$ and $\beta = e^{\frac{2\pi i\,l}{N}}$, we have
\begin{align*}
c_{T_i\times I}(\rho)&=[\frac{k}{N},\frac{l}{N},\frac{k}{N},\frac{l}{N};1]_{(\mu_i^1,\lambda_i^1),(\mu_i^0, \lambda_i^0)} \\
&=[\frac{k}{N},\frac{l}{N},-\frac{k}{N},\frac{l}{N};1]_{(\mu_i^1,\lambda_i^1),(-\mu_i^0, \lambda_i^0)} 
\end{align*}
Hence,
\begin{align*}
    &\ c_{T_A\times I}(\rho)\\
    &= [\frac{k}{N},\frac{l}{N},\frac{k}{N},\frac{l}{N};1]_{(\mu_A^1,\lambda_A^1),(\mu_A^0, \lambda_A^0)} \\
    &=[\frac{k}{N},\frac{l}{N},\frac{ak +cl}{N},\frac{bk + dl}{N};1]_{(\mu_A^1,\lambda_A^1),(\mu_B^1, \lambda_B^1)} \\
    &=[\frac{k}{N},\frac{l}{N},-\frac{k}{N},\frac{bk + dl}{N};\exp(2\pi i (-\nu)\frac{bk+dl}{N})] ,\ ( \nu := \frac{(a+1)k+cl}{N})\\
    &=[\frac{k}{N},\frac{l}{N},-\frac{k}{N},-\frac{l}{N};\exp(2\pi i (-\nu)\frac{bk+dl}{N} + 2\pi i (-\mu)\frac{k}{N})] ,\ (\mu := \frac{bk+(d+1)l}{N}) \\
    &=[\frac{k}{N},\frac{l}{N},-\frac{k}{N},\frac{l}{N};\exp(2\pi i f)]_{(\mu_A^1,\lambda_A^1),(-\mu_B^1, \lambda_B^1)}
\end{align*}
where,
\begin{align*}
    f = \nu\frac{bk+dl}{N} +  \mu\frac{k}{N} = \frac{k \mu - l \nu}{N} + \mu \nu.
\end{align*}
Note that, by Equation \ref{equ:torus_rep_equation2}, $\mu$ and $\nu$ are both integers. Also,
\begin{align*}
    c_{T_B\times I}(\rho)
    &= [\frac{k}{N},\frac{l}{N},-\frac{k}{N},\frac{l}{N};1]_{(\mu_B^1,\lambda_B^1),(-\mu_B^0, \lambda_B^0)} \\
    &= [\frac{k}{N},\frac{l}{N},-\frac{k}{N},\frac{l}{N};1]_{(\mu_B^1,\lambda_B^1),(-\mu_A^1, \lambda_A^1)} 
\end{align*}
By taking the pairing on $c_{T_A\times I}(\rho)$ and $c_{T_B\times I}(\rho)$, we obtain that,
\begin{equation}\label{equ:torus_CS_irrep}
    \CS(\rho)\ =\ f \ = \ \frac{k \mu - l \nu}{N}.
\end{equation}

For reducible representations $\rho_{\epsilon_x, \epsilon_y}$ in Equation \ref{equ:torus_reducible_form} depending on the values of $\epsilon_x$ and $\epsilon_y$ (see Section \ref{subsec:character_torus}), the computation of the CS invariant proceeds in the exactly the same way as for irreducible representations by making the substitution,
\begin{align*}
    \frac{k}{N} \to \frac{\epsilon_x}{2}, \quad \frac{l}{N} \to \frac{\epsilon_y}{2}.
\end{align*}
Consequently, by setting
\begin{align*}
    \nu = \frac{(a+1)\epsilon_x+c\epsilon_y}{2}, \quad \mu = \frac{b\epsilon_x+(d+1)\epsilon_y}{2},
\end{align*}
we obtain that,
\begin{align}
    \CS(\rho_{\epsilon_x, \epsilon_y}) &= \frac{\epsilon_x \mu - \epsilon_y \nu}{2} = \frac{\epsilon_x \mu + \epsilon_y \nu}{2} \nonumber \\
    &= \frac{(a+d+2)\epsilon_x\epsilon_y + b\epsilon_x + c\epsilon_y}{4}\label{equ:torus_CS_reducible00}
\end{align}
It can be checked that $\CS(\rho_{\epsilon_x, \epsilon_y}) \in \frac{1}{2}\bbZ$.

\subsection{Modular data from torus bundles over the circle} \label{subsec:MTC_torus}

In this subsection, let $M$ be a torus bundle over the circle with the monodromy map given by a matrix,
\begin{equation*}
    \begin{pmatrix}
    a & b\\
    c & d
    \end{pmatrix} \ \in \ \SL(2, \bbZ).
\end{equation*}
We assume that $N:= a + d+2 > 4$ is odd and $(c,N)$ are co-prime. It is direct to see that $b$ and $c$ are both odd, while $a$ and $d$ have different parity. Set $N = 2r+1$. Denote by $\tilde{c} \in \bbZ_N$ the multiplicative inverse of $c$ in $\bbZ_N$.

The non-Abelian character variety of $M$ to $\SL(2,\bbC)$ consists of the representations $\nabchar(M) = \{\rho_{+}, \rho_{-}, \rho_k, k = 1, \cdots, r\}$ which are defined as follows.
For $\rho_{\pm}$,
\begin{equation}
    x\mapsto
    \begin{pmatrix}
    1&1\\
    0&1
    \end{pmatrix},\quad
    y\mapsto
    \begin{pmatrix}
    1&u\\
    0&1
    \end{pmatrix},\quad
    h\mapsto
    \begin{pmatrix}
    v_{\pm}&0\\
    0&v_{\pm}^{-1}
    \end{pmatrix}
\end{equation}
where 
\begin{equation}
    u = \frac{d-a + \sqrt{(a+d)^2-4}}{2c}, \quad v_{\pm} = \pm \frac{1}{\sqrt{cu+a}}. 
\end{equation}
For $\rho_k$, $k = 1, \cdots, r$,
\begin{equation}
    x\mapsto
    \begin{pmatrix}
    e^{\frac{2\pi i k}{N}}&0\\
    0&e^{-\frac{2\pi i k}{N}}
    \end{pmatrix},\ 
    y\mapsto
    \begin{pmatrix}
    e^{\frac{-2\pi i \tilde{c}(a+1)k }{N}}&0\\
    0&e^{\frac{2\pi i \tilde{c}(a+1)k }{N}}
    \end{pmatrix},\ 
    h\mapsto
    \begin{pmatrix}
    0&1\\
    -1&0
    \end{pmatrix}
\end{equation}

In Section \ref{subsec:character_torus}, we computed the adjoint torsion and CS of representations of $\pi_1(M)$. In particular, it implies that all non-Abelian characters are adjoint-acyclic and their CS invariants are all rational numbers. As with the example of SFSs, we choose the candidate label set $L(M) = \nabchar(M)$. According to Section \ref{subsec:character_torus}, the torsion of these representations are given by
\begin{equation}\label{equ:torus_torsion}
    \Tor(\rho_{\pm}) = N, \quad \Tor(\rho_k) = \frac{N}{4}.
\end{equation}
The Chern-Simons invariant of $\rho_{\pm}$ is $0$ by Equation \ref{equ:torus_CS_reducible00}.
\begin{lemma}\label{lem:torus_CS}
 For $k = 1, \cdots, r$, the Chern-Simons invariant of $\rho_k$ is given by,
 \begin{equation}
     \CS(\rho_k) = -\frac{\tilde{c}k^2}{N}.
 \end{equation}
 \begin{proof}
 This can be derived from Equation \ref{equ:torus_CS_irrep}.
 \end{proof}
\end{lemma}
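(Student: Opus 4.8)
The plan is to specialize the general Chern--Simons formula for irreducible torus-bundle representations, Equation \ref{equ:torus_CS_irrep}, to the family $\rho_k$. Recall that $\rho_k$ sends $x \mapsto \mathrm{diag}(\alpha,\alpha^{-1})$ and $y \mapsto \mathrm{diag}(\beta,\beta^{-1})$ with $\alpha = e^{2\pi i k/N}$ and $\beta = e^{2\pi i l/N}$, where I take $l = -\tilde c(a+1)k$ as an honest integer (only $l \bmod N$ affects $\beta$, and only $\CS(\rho_k) \bmod 1$ is being asserted). With this choice, Equation \ref{equ:torus_CS_irrep} reads $\CS(\rho_k) = (k\mu - l\nu)/N$ where $\nu = \bigl((a+1)k + cl\bigr)/N$ and $\mu = \bigl(bk + (d+1)l\bigr)/N$, and a first step is to check directly that $\mu$ and $\nu$ are integers for this $l$.

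Next I would put $\nu$ and $\mu$ into closed form using $c\tilde c \equiv 1 \pmod N$ together with the monodromy identities $ad - bc = 1$ and $a+d+2 = N$. Writing $c\tilde c = 1 + sN$, one gets $\nu N = (a+1)k(1 - c\tilde c) = -(a+1)ksN$, so $\nu = -(a+1)ks$. Since $(a+1)(d+1) = ad + a + d + 1 = bc + N$, one gets $\mu N = k\bigl(b - \tilde c(a+1)(d+1)\bigr) = k\bigl(b - \tilde c(bc+N)\bigr) = -kN(bs + \tilde c)$, hence $\mu = -k(bs + \tilde c)$.

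Substituting into Equation \ref{equ:torus_CS_irrep} gives
\begin{equation*}
\CS(\rho_k) \;=\; \frac{k\mu - l\nu}{N} \;=\; -\,\frac{k^2\bigl(\tilde c + s\,(b + \tilde c(a+1)^2)\bigr)}{N}.
\end{equation*}
The fact that makes the extra terms disappear modulo $1$ is the congruence $b + \tilde c(a+1)^2 \equiv 0 \pmod N$: multiplying by $c$ and using $ad - bc = 1$, $a+d+2 = N$ gives $bc + (a+1)^2 = a^2 + 2a + ad = a(a+d+2) = aN \equiv 0 \pmod N$, and coprimality of $c$ with $N$ then yields the claim. Writing $b + \tilde c(a+1)^2 = tN$, we conclude $\CS(\rho_k) = -\tilde c k^2/N - k^2 st \equiv -\tilde c k^2/N \pmod 1$, as asserted.

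This is essentially a bookkeeping computation, so there is no genuine obstacle; the only points needing care are verifying that $\mu,\nu$ are integers for the chosen lift of $l$ (equivalently, that the answer is independent of that lift modulo $1$), and isolating the congruence $b + \tilde c(a+1)^2 \equiv 0 \pmod N$, which is precisely what absorbs every contribution to $\CS(\rho_k)$ other than $-\tilde c k^2/N$ into $\bbZ$.
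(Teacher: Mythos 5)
Your proposal is correct and follows exactly the route the paper intends: the paper's proof is the single sentence ``This can be derived from Equation \ref{equ:torus_CS_irrep},'' and you supply the omitted bookkeeping, correctly computing $\nu=-(a+1)ks$, $\mu=-k(bs+\tilde c)$ and isolating the congruence $b+\tilde c(a+1)^2\equiv 0\pmod N$ (via $bc+(a+1)^2=aN$) that absorbs everything except $-\tilde c k^2/N$ into $\bbZ$. All the arithmetic checks out, including the integrality of $\mu,\nu$ and the independence of the result mod $1$ from the integer lift of $l$.
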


We will show below that the premodular categories obtained from the torus bundles are related to quantum group categories associated with $\so_{2r+1}$.

For an odd integer $N = 2r+1 > 0$, let $\so_N$ (Type $B$) be the Lie algebra of $\textrm{SO}(N)$. Given $q = e^{\frac{m \pi i }{2N}}$ such that $q^2$ is a primitive $2N$-th root of unity (thus $m$ is odd and $(m,N)$ are co-prime), there is an associated premodular category $\C(\so_N, q, 2N)$ of rank $r+4$. See \cite{naidu2011finiteness} and references therein. When $m=1$, the corresponding category is always an MTC, and is denoted by  $\textrm{SO}(N)_2$ in physics literature. The MTC has the label set,
\begin{equation}\label{equ:so_label_set}
    \{\unitobj, Z\} \sqcup \{Y_1, \cdots, Y_r\} \sqcup \{X_1, X_2\}.
\end{equation}
We will mainly be interested in the (adjoint) monoidal subcategory $\C(\so_N, q, 2N)_{ad}$ linearly spanned by the objects $\unitobj, Z, Y_1, \cdots, Y_r$. So only modular data on this subcategory is given below.

The twists are,
\begin{equation}\label{equ:so_twist}
    \theta_{\unitobj} = \theta_{Z} = 1, \quad \theta_{Y_k} = q^{2(Nk-k^2)},\  k = 1, \cdots, r.
\end{equation}
The un-normalized $S$-matrix is,
\begin{equation}\label{equ:so_Smatrix1}
    \tilde{S}_{\alpha\beta} = 
    \begin{cases}
     1   & \alpha \in \{\unitobj, Z\}, \beta \in \{\unitobj, Z\} \\
     2   &  \alpha \in \{\unitobj, Z\}, \beta \in \{Y_1, \cdots,Y_r\}
    \end{cases}
\end{equation}
\begin{equation}\label{equ:so_Smatrix2}
    \tilde{S}_{kj}:= \tilde{S}_{Y_kY_j} = 2(q^{4kj} + q^{-4kj}) = 4 \cos \frac{2\pi m\,kj}{N}.
\end{equation}
In particular, there are only two values for quantum dimensions, $d_{\unitobj} = d_Z = 1$ and $d_{k}:= d_{Y_k} = 2$. The total dimension is $D = \sqrt{2N}$. Note that $\C(\so_N, q, 2N)_{ad}$ is a proper premodular category of rank $r+2$.
\begin{remark}
The label set as ordered in Equation \ref{equ:so_label_set} correspond to the labels $\{\textbf{0}, 2\lambda_1, \lambda_1, \cdots, \lambda_{r-1}, 2\lambda_r,\lambda_r, \lambda_r+\lambda_1 \}$ in \cite{naidu2011finiteness}. Although the $S$-matrix in \cite{naidu2011finiteness} is only given for the root $q = e^{\frac{\pi i}{2N}}$, the case for other roots can be easily deduced by either applying a Galois action to the original $S$-matrix or using the formula
\begin{equation*}
    \tilde{S}_{\lambda\mu} = \theta_{\lambda}^{-1}\theta_{\mu}^{-1} \sum_{\nu} N_{\lambda^*\mu}^{\nu}\theta_{\nu}d_{\nu}.
\end{equation*}
\end{remark}

Now, for the torus bundle defined at the beginning of the subsection, recall that $N = a+d+2$ is odd, and $\tilde{c}c = 1 \in \bbZ_N$. 
Let $m = -2 \tilde{c} - N \in \bbZ$ which is well defined up to multiples of $2N$. For clarity,  fix an arbitrary representative for $m$, and let $q = e^{\frac{m \pi i}{2N}}$. Note that $m$ is odd and co-prime to $2N$. Hence $q^2$ is a primitive $2N$-th root of unity.

We propose the following correspondence between $\nabchar(M)$ and loop operators,
\begin{equation}\label{equ:torus_local}
\begin{split}
    \rho_{\pm} &\mapsto (x, \Sym^0),\\
    \rho_{k}   &\mapsto (x^{mk},\Sym^1).
\end{split}
\end{equation}
and designate $\rho_{+}$ as the unit object, 
\begin{align}\label{equ:torus_unit}
    \rho_{+} = \unitobj &\mapsto (x, \Sym^0).
\end{align}

\begin{theorem}\label{thm:torus_MTC}
Let $M$ be the torus bundle over the circle with the monodromy matrix 
$\begin{pmatrix}
a & b\\
c & d
\end{pmatrix}$
such that $N = a+d+2 > 4$ is odd and $(c,N)$ are co-prime. With the choice of loop operators and unit object in Equations \ref{equ:torus_local} and \ref{equ:torus_unit}, respectively, and $q$ as above, the modular data constructed from $M$ matches that of $\mathcal{C}(\so_N, q, 2N)_{ad}$, the adjoint subcategory of $\mathcal{C}(\so_N, q, 2N)$. 
\begin{proof}
For convenience, we also write $\rho_{\pm}$ and $\rho_k$ simply as $\pm$ and $k$, respectively. The correspondence between $\nabchar(M)$ and label set of $\mathcal{C}(\so_N, q, 2N)_{ad}$ is,
\begin{equation*}
    \rho_{+} \leftrightarrow \unitobj, \quad \rho_{-} \leftrightarrow Z, \quad \rho_{k} \leftrightarrow Y_k, \ k = 1, \cdots, r.
\end{equation*}
We first check the twists. By Equation \ref{equ:so_twist}, 
\begin{align*}
    \theta_{Y_k} \ =\  q^{2(Nk-k^2)} \ =\  e^{-\frac{2\pi i}{N} \frac{Nk-k^2}{2}(2\tilde{c}+N)} \ = \ e^{2\pi i \frac{\tilde{c}k^2}{N}} .
\end{align*}
Note that in the last equality, we used the fact that $(Nk-k^2)/2$ is an integer. By Lemma \ref{lem:torus_CS}, we immediately have
\begin{align*}
    \theta_{Y_k} = e^{-2\pi i \CS(\rho_k)}.
\end{align*}
Of course, for $\rho_{\pm}$, a similar relation to the above  holds trivially.

Next, we verify quantum dimension.
\begin{align}
    W_{+}(\pm) = 1, \quad W_{+}(k) = \Tr_{\Sym^1}(\rho_{+}(x^{mk})) =2.
\end{align}
This means that the total dimension is $D = \sqrt{2N}$ (equal to the dimension of $\mathcal{C}(\so_N, q, 2N)_{ad}$), and by Equation \ref{equ:torus_torsion}, for each $\rho \in \nabchar(M)$, the normalized quantum dimension matches the torsion,
\begin{align*}
    \frac{W_{+}(\rho)}{D} = (2\Tor(\rho))^{-\frac{1}{2}}.
\end{align*}
Lastly, for the $S$-matrix computed from the $W$ matrix,
\begin{align*}
    \tilde{S}_{\alpha\beta} = 1,\quad \alpha, \beta \in \{+, -\}. 
\end{align*}
\begin{align*}
    &\tilde{S}_{\alpha k} = W_k(\alpha)W_{+}(k) = 2,\\
    &\tilde{S}_{k\alpha} = W_{\alpha}(k)W_{+}(\alpha) = 2, \quad \alpha \in \{+, -\}.
\end{align*}
\begin{align*}
    \tilde{S}_{kj} = W_j(k)W_+(j) = 2\, \Tr_{\Sym^1}(\rho_j(x^{mk})) = 4 \cos \frac{2\pi m\, kj}{N}, \quad k,j = 1, \cdots, r.
\end{align*}
This matches the $S$-matrix of $\mathcal{C}(\so_N, q, 2N)_{ad}$ in Equations \ref{equ:so_Smatrix1} and \ref{equ:so_Smatrix2}.
\end{proof}
\end{theorem}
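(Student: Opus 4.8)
The plan is to verify, piece by piece, that the three parts of the modular data produced by the program from $M$ --- the topological twists, the quantum dimensions (hence the global dimension $D$), and the $S$-matrix --- coincide with those of $\C(\so_N, q, 2N)_{ad}$ recorded in Equations \ref{equ:so_twist}, \ref{equ:so_Smatrix1} and \ref{equ:so_Smatrix2}. First I would fix the label correspondence $\rho_{+} \leftrightarrow \unitobj$, $\rho_{-} \leftrightarrow Z$, $\rho_k \leftrightarrow Y_k$ for $k = 1,\dots,r$; since $\nabchar(M)$ has $r+2$ elements and $\C(\so_N,q,2N)_{ad}$ has rank $r+2$, this is at least a bijection of label sets, and everything else is a matter of matching numbers. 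Throughout I would feed in the torsion values $\Tor(\rho_{\pm}) = N$ and $\Tor(\rho_k) = N/4$ from Section \ref{subsec:torus_torsion_CS}, together with the Chern--Simons values $\CS(\rho_{\pm}) = 0$ (Equation \ref{equ:torus_CS_reducible00}) and $\CS(\rho_k) = -\tilde{c}k^2/N$ (Lemma \ref{lem:torus_CS}).

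For the twists, the program assigns $\theta_{\rho_k} = e^{-2\pi i\, \CS(\rho_k)} = e^{2\pi i\,\tilde{c}k^2/N}$, and I would show this equals $\theta_{Y_k} = q^{2(Nk-k^2)}$ for $q = e^{m\pi i/(2N)}$ and $m \equiv -2\tilde{c} - N \pmod{2N}$. The key observation is that $N$ is odd, so $k(N-k) = Nk - k^2$ is always even; writing $q^{2(Nk-k^2)} = \exp\bigl(\tfrac{2\pi i m}{N}\cdot \tfrac{k(N-k)}{2}\bigr)$ reduces the identity to an integer-exponent congruence modulo $N$, and substituting $m = -2\tilde{c} - N$ and using $\tilde{c}c \equiv 1 \pmod N$ produces exactly $e^{2\pi i\,\tilde{c}k^2/N}$. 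The same computation gives $\theta_{\rho_{\pm}} = 1 = \theta_{\unitobj} = \theta_Z$ trivially.

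For the quantum dimensions and the $S$-matrix I would compute the $W$-symbols directly from the loop-operator assignment in Equation \ref{equ:torus_local} (taking $\epsilon = 1$). Since $\rho_{+}(x)$ is unipotent, $W_{+}(\rho_{\pm}) = \Tr_{\Sym^0}(\rho_{+}(x)) = 1$ and $W_{+}(\rho_k) = \Tr_{\Sym^1}(\rho_{+}(x^{mk})) = 2$; with $D^2 = 2\Tor(\rho_{+}) = 2N$ this yields $D = \sqrt{2N}$ and matches the normalized quantum dimensions $W_{+}(\rho)/D$ against $(2\Tor(\rho))^{-1/2}$ using the torsion values above. For the off-diagonal block, $\rho_j(x) = \mathrm{diag}(e^{2\pi i j/N}, e^{-2\pi i j/N})$, so $W_j(\rho_k) = \Tr_{\Sym^1}\bigl(\rho_j(x^{mk})\bigr) = 2\cos\tfrac{2\pi m k j}{N}$, whence $\tilde{S}_{kj} = W_j(\rho_k)W_{+}(\rho_j) = 4\cos\tfrac{2\pi m k j}{N}$, precisely Equation \ref{equ:so_Smatrix2}; the entries involving $\rho_{\pm}$ give the constants $1$ and $2$ of Equation \ref{equ:so_Smatrix1}.

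The main obstacle is not any single display but the structural input that $\nabchar(M) = \{\rho_{+},\rho_{-},\rho_1,\dots,\rho_r\}$ is the \emph{complete} non-Abelian character variety under the standing hypotheses $N > 4$ odd and $(c,N)$ coprime: one must check that coprimality forces $b$ and $c$ odd and $a,d$ of opposite parity, which via the case analysis of Section \ref{subsec:character_torus} eliminates every reducible-indecomposable character except the pair $\rho_{\pm}$ (the $\epsilon_x = \epsilon_y = 0$ case) and pins the irreducible ones down to the family $\rho_k$. The second delicate point, as noted above, is choosing the root of unity $q$ (equivalently the residue of $m$ modulo $2N$) so that the twists agree on the nose rather than up to a global phase; this is exactly where the parity fact $2 \mid k(N-k)$ is used, and I expect this twist-matching congruence bookkeeping to be the fiddliest step in the argument.
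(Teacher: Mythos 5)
Your proposal is correct and follows essentially the same route as the paper: the same label correspondence, the same twist computation via $m=-2\tilde{c}-N$ and the parity fact that $(Nk-k^2)/2\in\bbZ$, and the same direct evaluation of the $W$-symbols against Equations \ref{equ:so_Smatrix1}--\ref{equ:so_Smatrix2}. One trivial slip: the oddness of $b,c$ and the opposite parity of $a,d$ follow from $N=a+d+2$ being odd (via $bc=ad-1$), not from the coprimality of $(c,N)$; the completeness of $\nabchar(M)$ is already established in Section \ref{subsec:character_torus} rather than inside this proof.
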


Torus bundles  are not $\mathbb{Z}_2$ homology sphere by Equation \ref{equ:torus_pi1}, and  the adjoint subcategory of $\mathcal{C}(\so_N, q, 2N)$ is a properly premodular category by Equation \ref{equ:so_Smatrix1}. Hence the above theorem verifies the conjecture on the non-degeneracy of the resulting premodular category for torus bundles considered in this paper.

\begin{remark}
In this subsection, we restricted ourselves to the case where $N = a+d+2>4$ is odd and $(c, N)$ are co-prime. In other cases, it seems less straightforward to derive the character variety and the structure of the character variety depends on the parity of $N$ (among other factors). This is expected, since we conjecture in the general case the corresponding premodular category is also related to the adjoint subcategory of some $\C(\so_N, q,l)$ whose structure varies dramatically depending on the parity of $N$ and the value of $N$ modulo $4$ in the case of even $N$. We leave this as a future direction.
\end{remark}

\section{Full data of modular categories and beyond}

The structure theory of MTCs is naturally divided into two parts: one is the classification of modular data (MD), and the other is for a fixed modular data, the classification of modular isotopes (MIs)\footnote{A terminology due to C. Delaney: distinct MTCs with the same MD are called modular isotopes of each other.}.  The missing steps in the program from three manifolds to MTCs are then an algorithm to define loop operators for an admissible candidate label set, hence a candidate MD, and the $F$-matrices for the fusion structures beyond MD.

Physics point to a framework that is a generalization of gauging finite group symmetries \cite{gaugingP,gaugingM} to continuous non-Abelian Lie group symmetries such as $\SU(2)$.  One hint from physics is the form of the primitive loop operators in this paper: a pair $(a,R)$, where $a$ is a conjugacy class of the fundamental group, some kind of flux, and $R$ is an irreducible representation of $\SU(2)$, some charge of the $\SU(2)$ symmetry.  The $F$-matrices are difficult to find, so we wonder if they depend on more than topology: some geometric information of the given three manifolds.

\subsection{Towards the full data}

\subsubsection{From non-Abelian characters to loop operators}

The identification of a simple object type with a non-Abelian character is based on the relation between a simple object type and a loop operator in the solid torus. In a $(2+1)$-TQFT, the rank of an MTC is the same as the dimension of the vector space $V(T^2)$ associated to the torus $T^2$ from the TQFT.  One basis $\{e_a\}$ of the vector space $V(T^2)$ consists of labeled core curves of a solid torus by a complete representative set of simple objects $\{a\}$.  Then each basis element $e_a$ can be obtained as the image of a loop operator $O_a$ on $e_0$---the basis element associated to the vacuum, i.e. $|e_a>=O_a|e_0>$.

Suppose a non-Abelian character corresponds to a primitive loop operator $(a,R)$ of the three manifold $X$.  Then $a$ can be represented by a knot $K_a$ in $X$.  The knot complement of $K_a$ in $X$ determines a vector in $V(T^2)$ from the reduction of 6d SCFT onto $X$, which should be related to $e_a$, hence a simple object type eventually.

\subsubsection{From flatness equations to pentagons}

One possible relation between pentagon equations and flatness is that the flatness of $\SL(2,\bbC)$-connections corresponding to the fundamental group representations can be translated into pentagon equations for the $F$-matrices.  It is known that pentagon equations can be interpreted as flatness equations for bi-unitary connections on finite graphs (see e.g. \cite{yasu20}).

\subsection{Towards gauging $\SU(2)$ R-symmetry}

An R-symmetry of a super-symmetric theory is an outer automorphism of the super-Poincare group that fixes the Poincare group.  It is pointed out in \cite{gang20} that the R-symmetries in infrared could be different from those in ultra-violet.  Hence we could have an $\SU(2)$ R-symmetry for the residual topological theory in infrared, which is probably often trivial.  We believe that the MTCs obtained from three manifolds in this program are actually the results of gauging such $\SU(2)$ R-symmetries of the residual topological theory in infrared, which generalizes gauging of finite group symmetries \cite{gaugingP,gaugingM}.

\subsection{Towards quantum double of infinite discrete groups}

An interesting class of MTCs comes from the representation categories of quantum doubles of finite groups.  A naive generalization to infinite discrete groups does not work.  The program in this paper can be regarded as a first step in this direction for the class of 3-manifold groups.  The choice of the simple Lie group serves as an analogue of a level in quantum groups.

\subsection{Climbing the dimension ladder}

Two interesting classes of quantum algebras are vertex operator 
algebras (VOAs) and MTCs.  
The bulk-edge correspondence of topological phases of
matter makes them into a unified theory of two and three dimensions. 
The program in this paper suggests an inversion of dimensions: 
MTCs and VOAs could also fit into a unified theory of three and four
dimensional manifolds, where 4-manifolds with 3-manifold boundaries could
give rise to VOAs that realize the boundary MTCs.

\subsection{Open questions}

There are many other interesting open questions in this program.  One obvious one is to extend our results to more examples such as Seifert fibered spaces with more than three fibers and the remaining cases of our torus bundles over the circle examples.  It is also not clear how to obtain MTCs which are not self-dual.  As mentioned in Sec. \ref{sec:backgrounds}, representations of $\SL(2,\bbC)$ come in group of four and a natural guess is that one of the four is the dual anyon type.  If so, then which one? The dual representation is a candidate.  Another general direction is what operations on MTCs that  standard topological constructions of three manifolds such as connected sum and torus decomposition correspond to.  Connect sum should correspond to Deligne product.

Our adjoint-acyclic condition for a representation $\rho$ is closely related to $H^1(\pi, \text{Adj}\circ \rho)=0$.  Are they equivalent? It should be equivalent for irreducible representations, but for the indecomposable reducible ones, it is not clear.

\vspace{0.5cm}
\noindent \textbf{Acknowledgments.} 
Y. Q. and Z.W. are partially supported by NSF grant
FRG-1664351 and CCF 2006463. S.-X. C. is partially supported by NSF CCF 2006667. The research is also partially supported by ARO MURI contract W911NF-20-1-0082.  The third author thanks Dongmin Gang for helpful communications, who pointed out that the Seifert fibered spaces $(3,3,r)$ would give rise to modular tensor categories related to $SU(2)_k$. The first author thanks Eric Rowell for clarifying some facts about $\mathcal{C}(\mathfrak{so},q,l)$.

\bibliographystyle{plain}
\bibliography{MTC_bib}
\end{document}